\newcommand{\Frac}{\displaystyle\frac}
\newcommand{\jmp}[1]{\,[\![#1]\!]}
\newtheorem{remark}{Remark}[section]
\newtheorem{example}{Example}[section]
\newtheorem{theorem}{Theorem}[section]
\newtheorem{definition}{Definition}[section]
\newtheorem{lemma}{Lemma}[section]
\definecolor{MidnightBlue}{rgb}	{0,0.1,.5}
\definecolor{darkgreen}{rgb}	{0,0.6,0}
\title[A Stabilized Dual Mixed Hybrid Finite Element Method]
{A Stabilized Dual Mixed Hybrid Finite Element Method with 
Lagrange multipliers for Three-Dimensional 
Problems with Internal Interfaces}
\address{$^{1}$ Dipartimento di Matematica, Politecnico di Milano,  
				 Piazza L. da Vinci 32, 20133 Milano, Italy}
\address{$^{(2)}$ Department of Electrical Engineering and Computer Science, 
College of Engineering, University of Missouri, 201 Naka Hall, Columbia, MO 65211}
\author{Riccardo Sacco$^{1}$ \and Aurelio Giancarlo Mauri$^{1}$ \and
Giovanna Guidoboni$^{2}$}
\email{riccardo.sacco@polimi.it}
\email{aureliogiancarlo.mauri@polimi.it}
\email{guidobonig@missouri.edu}
\begin{document}

\date{\today}

\begin{abstract}
This work focuses on a class of elliptic boundary value problems 
with diffusive, advective and reactive terms, motivated by the study of three-dimensional heterogeneous 
physical systems composed of two or more media separated by a selective interface.
We propose a novel approach for the numerical approximation of such heterogeneous systems 
combining, for the first time: (1) a dual mixed hybrid (DMH) finite element method (FEM)
based on the lowest order Raviart-Thomas space (RT0); (2) a Three-Field (3F) formulation;
and (3) a Streamline Upwind/Petrov-Galerkin (SUPG) stabilization method.
Using the abstract theory for generalized saddle-point problems and their approximation,
we show that the weak formulation of the proposed method
and its numerical counterpart are both uniquely solvable and that the resulting 
finite element scheme enjoys optimal 
convergence properties with respect to the discretization parameter. 
In addition, an efficient implementation of the proposed formulation is presented. 
The implementation is based on a systematic use
of static condensation which reduces the method to a nonconforming finite element approach on a 
grid made by three-dimensional simplices.
Extensive computational tests demonstrate the theoretical conclusions and indicate 
that the proposed DMH-RT0 FEM scheme is accurate and stable even in the presence of marked 
interface jump discontinuities in the solution and its associated normal flux. 
Results also show that in the case of strongly dominating advective terms, 
the proposed method with the SUPG stabilization is capable of resolving 
accurately steep boundary and/or interior layers
without introducing spurious unphysical oscillations or excessive smearing of the solution front.
\end{abstract}

\maketitle

{\bf Keywords:}
Finite element method; mixed hybrid methods; 
interfaces; transmission problems; stabilization.

%%%%%%%%%%%%%%%%%%%%%%%%%%%%%
\section{Introduction and motivation}
\label{sec:intro}
%%%%%%%%%%%%%%%%%%%%%%%%%%%%%

The study of heterogeneous physical systems composed of two or more media separated by
selective interfaces is a topic of utmost relevance in applied sciences.
Indeed, many applications in biology~\cite{KEDEM1958,membraneselectivity1961,rutten2002,CANGIANI2010}, 
materials science~\cite{RaoHughesI,RaoHughesII,Lee2011}, 
nanoelectronics~\cite{IEEEchargetrapmodels} and
geophysics~\cite{Martin2005,2016APS}, to name a few, are characterized by interface phenomena that play a crucial role in determining the transmission of physical quantities between different media and/or between different regions within the same medium.

The present work focuses on a class of mathematical problems directly motivated by the aforementioned applications. Specifically, we consider a stationary advection-diffusion-reaction problem in a three-dimensional  volume, denoted by $\Omega\subset\mathbb R^3$, whose physical properties may vary in space, thereby leading to an elliptic second-order partial differential equation with variable coefficients. In addition, we account for the presence of a selective internal interface, denote by $\Gamma$, which is geometrically represented by a two-dimensional manifold in $\Omega$ and on which we impose suitable transmission conditions  to ensure the balance of flux density across the interface and to model segregation phenomena that may occur within the interface itself. 
For example, the mathematical setting considered in this article may be used to describe
superficial chemical processes involved in semiconductor crystal growth~\cite{Causin2004} 
or mass transport and reaction mechanisms occurring at the cellular scale across 
the membrane lipid bilayer~\cite{Wood2002}.

The fact that many driving processes actually occur at internal interfaces poses serious challenges for the numerical solution of the class of problems described above. 
In particular, in order to obtain physically-relevant solutions it is crucial to maintain the main physical features associated with interfacial phenomena from the continuous to the discrete level, including the continuity of flux density at the interface. Many numerical approaches have been proposed for the solution of elliptic problems in spatially heterogeneous domains. In particular, domain decomposition methods have been proven to be very effective in dealing with partitions in the volume, which may result from physical heterogeneities in the medium and/or from artificial partitioning aimed at reducing the computational costs of large-scale problems. Many different discretization techniques have been utilized within the context of domain decomposition methods, including finite elements, spectral elements and finite volumes. 
We refer to~\cite{quarteroni1999domain} for a complete overview of theoretical and computational properties of the domain decomposition approach. 

Motivated by the need of accurately capturing interface phenomena, in this work we propose a novel numerical approach that combines, for the first time: 
\begin{enumerate}
\item a \textit{Dual Mixed Hybrid (DMH) finite element method (FEM)} in order to ensure that: 
\textit{(i)} the solution (or primal variable) verifies the given partial differential equation
 within each element (see~\cite{RaviartThomas1977});
 \textit{(ii)} the flux (or dual variable) associated with the solution is continuous across elements (see~\cite{deVeubeke1965,RaviartThomas1979});
and 
\textit{(iii)} both primal and dual variables satisfy optimal error estimates (see~\cite{brezzifortin1991,RobertsThomas1991});
\item  a \textit{Three-Field formulation (3F)}, typical of domain decomposition approaches, in order to account for interfacial discontinuities within the weak formulation of the problem (see~\cite{brezzi3F1994,quarteroni1999domain,BrezziMarini3F_MathComp_2000}); 
\item a \textit{Streamline Upwind/Petrov-Galerkin (SUPG) stabilization method} in order to gain the required amount of numerical stability without 
significantly spoiling the accuracy of the computed solution due to excessive 
crosswind smearing (see~\cite{BROOKS1982,HUGHES1987}). 
\end{enumerate}
We remark that the 
pair of Lagrange multipliers introduced within the 3F formulation is a natural fit for the DMH FEM functional framework (see~\cite{brezzi3F1994,quarteroni1999domain}). 
In addition, the use of static condensation allows us to 
eliminate variables defined in the interior of each element in favor of the sole hybrid variable, thereby
obtaining a final algebraic system structurally analogous to that of a standard primal-based
finite element approach (see~\cite{ArnoldBrezzi1985} and~\cite[Chapter~5]{brezzifortin1991}).

The proposed stabilized DMH-RT0 FEM scheme is analyzed at both the infinite and finite dimensional levels 
using the abstract theory of saddle-point problems; its well-posedness and optimal error estimates are proved under suitable assumptions on the data.
A series of simulations is performed to validate the accuracy and robustness of the novel method via comparison between numerical and analytical solutions in three-dimensional test cases.
Results show that the proposed stabilized DMH-RT0 FEM scheme ({\em i}) satisfies the theoretical findings 
even in the presence of marked interface jump discontinuities in the solution and its associated flux;
and ({\em ii}) is capable of accurately resolving steep boundary and/or interior layers
without introducing spurious unphysical oscillations or excessive smearing of the solution front.

An overview of the article is as follows.
Section~\ref{sec:model} introduces the mathematical model
% multi-domain transmission model problem object of
%the mathematical study of this article 
and 
%addresses 
the physical meaning of interface and
boundary conditions.
Section~\ref{sec:dmh_method} presents the weak formulation of the problem through
the novel DMH method proposed in the article and the analysis
of its well-posedness through the general theory reported in~\ref{sec:saddle_point_continuous}.
Section~\ref{sec:dmh_Galerkin_FE_approximation} presents the Galerkin approximation of
the DMH weak problem studied in Section~\ref{sec:dmh_method} and  
the analysis of its well-posedness through the general theory 
reported in~\ref{sec:saddle_point_approx}.
Section~\ref{sec:static_condensation} addresses the issue of how to efficiently implement 
the proposed DMH-RT0 FEM scheme via static condensation whereas Section~\ref{sec:stabilization}
describes how to introduce a mechanism of stabilization into the DMH-RT0 FEM scheme to prevent 
the onset of spurious unphysical oscillations when the problem becomes advection-dominated.
Section~\ref{sec:spectral_analysis_stab} is devoted to the spectral analysis of the stabilized diffusion tensor. Section~\ref{sec:numerical_results} provides a thorough discussion of the numerical simulations
conducted to validate the accuracy and stability of the novel DMH-RT0 FEM scheme.
Section~\ref{sec:conclusions} gives a summary of the content of the work and an overview of future
investigations. 
%~\ref{sec:generalized_saddle_point_pbs} contains a summary of the
%theoretical results for the well-posedness of a generalized saddle-point problem and its 
%Galerkin approximation.

%%%%%%%%%%%%%%%%%%%%%%%%%%%%%%%%%%
\section{Mathematical model}
\label{sec:model}
%%%%%%%%%%%%%%%%%%%%%%%%%%%%%%%%%%

Let $\Omega$ be an open polyhedral subset of $\mathbb{R}^3$ and let $\partial \Omega \equiv \Sigma$
denote the boundary of $\Omega$ on which an outward unit normal vector $\mathbf{n}$ is defined
(see Figure~\ref{fig:domain}).
\begin{figure}[h!]
\centering
\includegraphics[width=0.45\textwidth]{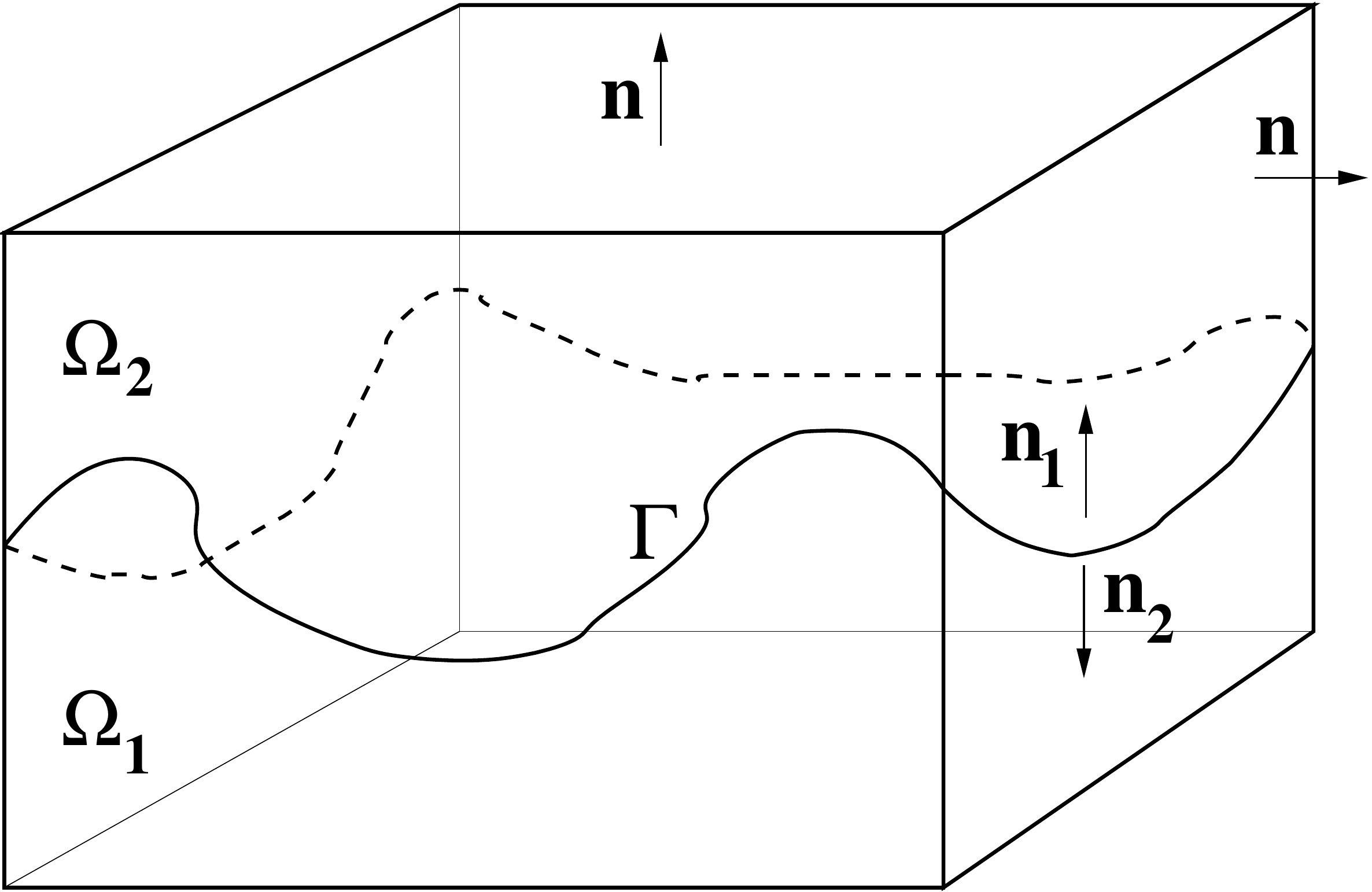}
\caption{The domain $\Omega$, its partition into subregions $\Omega_1$, $\Omega_2$, the internal interface 
$\Gamma$ and the geometrical notation.}
\label{fig:domain}
\end{figure}
The domain $\Omega$ is the union of two subregions $\Omega_1$ and $\Omega_2$, whose boundaries 
are denoted by $\partial \Omega_1$ and $\partial \Omega_2$, respectively. The two subregions are separated by the interface $\Gamma = \partial \Omega_1 \cap \partial \Omega_2$. 
%which henceforth will be referred to  as the interface. 
For any function $w:\Omega \rightarrow \mathbb{R}$, we denote by $w_1$ and $w_2$ the restrictions of $w$
to $\Omega_1$ and $\Omega_2$, respectively. We also denote by $w_1|_\Gamma$ and $w_2|_\Gamma$
the traces on $\Gamma$ of $w_1$ and $w_2$, respectively.
For each point $\mathbf{y} \in \Gamma$, we define two
unit normal vectors $\mathbf{n}_1(\mathbf{y})$ and $\mathbf{n}_2(\mathbf{y})$ outwardly directed with respect to $\Omega_1$ and $\Omega_2$, respectively,
for which it holds $\mathbf{n}_1(\mathbf{y}) + \mathbf{n}_2(\mathbf{y})=\mathbf{0}$.
Thus, the three-dimensional problem considered in this article reads:
\begin{subequations}\label{eq:tx_model}
\begin{align}
& {\rm div} \mathbf{J} + r u = g         & \textrm{in} \, \, (\Omega \setminus \Gamma) \label{eq:continuity} \\
& \mathbf{J} = \mathbf{v} u - \boldsymbol{\mu} \nabla u & \textrm{in} \, \, (\Omega \setminus \Gamma) \label{eq:J} \\
& \mathbf{J}_1|_\Gamma \cdot \mathbf{n}_1 + \mathbf{J}_2|_\Gamma \cdot \mathbf{n}_2
= -\sigma & \textrm{on} \, \, \Gamma \label{eq:tx_flux} \\
& u_2|_{\Gamma} = \kappa u_1|_{\Gamma} &  \textrm{on} \, \, \Gamma \label{eq:tx_segr} \\
& \gamma \mathbf{J} \cdot \mathbf{n} = \alpha u - \beta & \textrm{on} \, \, \Sigma. &
\label{eq:bcs}
\end{align}

The dependent variables of the problem are $u$ and $\mathbf{J}$. 
In the remainder of the article,
we shall refer to $u$ as the \emph{primal} variable and to $\mathbf{J}$ as
the \emph{dual} variable. The meaning of this terminology is related to the variational principles 
associated with the solution of system~\eqref{eq:tx_model} (see~\cite{brezzifortin1991}). 
Equation~\eqref{eq:continuity} is a stationary conservation law in which the quantity 
$g - ru$  represents a net production rate of the physical quantity modeled by the function $u$,
with $r=r(\mathbf x)$ and $g=g(\mathbf x)$ denoting nonnegative and bounded given functions of space.
The given advection field  $\mathbf{v}=\mathbf v(\mathbf x)$ is assumed to be piecewise smooth over $\Omega$, 
whereas the diffusivity 
tensor $\boldsymbol{\mu}$ is assumed to be a multiple of the identity, namely $\boldsymbol{\mu} (\mathbf x)= \mu (\mathbf x) \mathbf{I}$, where
$\mathbf{I}$ is the identity tensor in $\mathbb{R}^3$ and the function $\mu$ satisfies the
following bound
\begin{align}
& 0 < \mu_{min} \leq \mu(\mathbf{x}) \leq \mu_{max} < +\infty
& \textrm{for a.e.} \,
\mathbf{x} \in \Omega. & \label{eq:mu_bound}
\end{align}

Equations~\eqref{eq:tx_flux} and~\eqref{eq:tx_segr} are the transmission conditions enforced on
the interface $\Gamma$. Equation~\eqref{eq:tx_flux} expresses the balance of flux density across the interface separating the two subdomains, where
the given function $\sigma=\sigma(\mathbf x)$ represents
a superficial source or sink over the interface. 
Equation~\eqref{eq:tx_segr} expresses the mechanism of segregation occurring within the interface, where the nonnegative function $\kappa=\kappa(\mathbf x)$ represents a local equilibrium constant~\cite{Wood2002}. In particular, if $\kappa=1$ and $\sigma=0$, problem~\eqref{eq:tx_model} corresponds to a
multidomain formulation of the advection-diffusion-reaction equation~\eqref{eq:continuity}-~\eqref{eq:J}
over the \emph{whole} domain $\Omega$.
Equation~\eqref{eq:bcs} expresses the boundary condition on the external surface of $\Omega$, where $\alpha=\alpha(\mathbf x)$, $\beta=\beta(\mathbf x)$ and $\gamma=\gamma(\mathbf x)$ are given functions. In particular, we assume that 
%$\alpha$ is  strictly positive over $\Sigma$ and  that  $\gamma$ satifies 
\begin{align}
& \alpha (\mathbf x) >0
\quad \mbox{and} \quad
0 \leq \gamma(\mathbf{x}) \leq 1
& \textrm{for a.e.} \, \mathbf{x} \in \Sigma. & \label{eq:gamma_bounds}
\end{align}
We remark that Equation~\eqref{eq:bcs} corresponds to a Robin boundary condition in the case $\gamma >0$ and to a Dirichlet boundary condition in the case $\gamma=0$.
\end{subequations}
For the sake of simplicity, in the remainder of the article 
(with the sole exception of Section~\ref{sec:numerical_results}), 
we assume $\kappa$ to be a positive constant and $\gamma =1$.

%%%%%%%%%%%%%%%%%%%%%%%%%%%%%%%%%%%%%%%%%%%%%%%%%
\section{Dual mixed hybrid weak formulation}\label{sec:dmh_method}
%%%%%%%%%%%%%%%%%%%%%%%%%%%%%%%%%%%%%%%%%%%%%%%%%%
The weak formulation of problem~\eqref{eq:tx_model} is obtained by extending the DMH method (see~\cite{Thomas1977thesis,RaviartThomas1979,farhloul1993,Farhloul1997,fortin2017}) to include Lagrange multipliers for the interface conditions~\eqref{eq:tx_flux} and~\eqref{eq:tx_segr}, in the spirit of the 3F formulation (see~\cite{brezzi3F1994,quarteroni1999domain,BrezziMarini3F_MathComp_2000}). For the sake of clarity, we begin by describing 
the functional setting in Section~\ref{sec:functional_setting_dmh}, 
followed by 
the geometrical discretization of the domain in Section~\ref{sec:geometrical_discretization},
the derivation of the weak formulation in  Section~\ref{sec:dmh_weak_form} and the study of its well-posedeness
in Section~\ref{sec:well-posedness_dmh}.

%%%%%%%%%%%%%%%%%%%%%%%%%%%%%%%%%%%%%%%
\subsection{Functional setting}\label{sec:functional_setting_dmh}
%%%%%%%%%%%%%%%%%%%%%%%%%%%%%%%%%%%%%%%

Let us denote by $\mathcal{S}$ an open bounded subset of $\mathbb{R}^3$ 
having a boundary $\partial \mathcal{S}$. Throughout the article, we will utilize the functional spaces 
$L^2(\mathcal S)$, $H^1(\mathcal S)$ and $H({\rm div}\mathcal S)$,
endowed with the usual $L^2-$, $H^1-$ and $H({\rm div})-$ norms denoted by 
$\|\cdot\|_{0,\mathcal S}$, $\|\cdot\|_{1,\mathcal S}$ and $\| \cdot \|_{H({\rm div}; \mathcal{S})}$, 
respectively, with div denoting the divergence operator. 
We will also utilize the trace theorems, which involve 
the functional space $H^{1/2}(\partial \mathcal S)$ and its dual $H^{-1/2}(\partial \mathcal S)$ 
endowed with the norms:
\begin{subequations}\label{eq:functional_spaces}
\begin{align} 
&  \displaystyle \| \eta \|_{1/2, \partial \mathcal S} = 
\inf_{\substack{\phi \in H^1(\mathcal S)\\ \phi|_{\partial \mathcal S}=\eta}} \| \phi \|_{1,\mathcal S}, 
& \label{eq:H1half_norm} \\
& \displaystyle \| \mu\|_{-1/2,\partial \mathcal S} = 
\inf_{\substack{\mathbf{q} \in H({\rm div};\mathcal S)\\ \mathbf{q} \cdot \mathbf{n}|_{\partial\mathcal S}=\mu}}
\| \mathbf{q} \|_{H({\rm div};\mathcal S)}. & \label{eq:H-1half_norm}
\end{align}
\end{subequations}
We refer to~\cite{Thomas1977thesis,RobertsThomas1991,brezzifortin1991} and references cited therein 
for definitions and mathematical properties of the above mentioned functional spaces.
In addition, we will denote by $(\cdot, \cdot)_{\mathcal S}$ the scalar product in $L^2$ over $\mathcal S$ 
and, for simplicity, we will use the shortened notation
$(\cdot, \cdot)_i$ for the scalar product in $L^2$ over $\Omega_i$.

%%%%%%%%%%%%%%%%%%%%%%%%%%%%%%%%%%%%%%%%
\subsection{Geometrical discretization}\label{sec:geometrical_discretization}
%%%%%%%%%%%%%%%%%%%%%%%%%%%%%%%%%%%%%%%%
Let $\left\{ \mathcal{T}_h \right\}_{h>0}$ denote a family of regular triangulations 
of the computational domain $\Omega$ made of 
closed tetrahedral elements $K$ (cf. Definition~3.4.1 of~\cite{quarteroni1994numerical}), where
the positive quantity $h$ represents the discretization parameter. 
We assume that each partition of the family satisfies the admissibility 
criteria of~\cite{quarteroni1994numerical}, Section 3.1. We also assume that 
each subdomain $\Omega_i$, $i=1,2$, is exactly covered 
by the elements of $\mathcal{T}_h$ and we denote by $\mathcal{T}_{h,1}$ 
and $\mathcal{T}_{h,2}$ the restrictions of $\mathcal{T}_{h}$ to $\Omega_1$ and $\Omega_2$,
in such a way that $\Omega = \mathcal{T}_{h,1} \cup \mathcal{T}_{h,2}$ and 
$\Gamma = \mathcal{T}_{h,1} \cap \mathcal{T}_{h,2}$. This latter property amounts to assuming that
the two partitions connect in a \emph{conforming} manner at the interface. 
For more general geometrical approaches and related numerical schemes, 
we refer to~\cite{quarteroni1999domain} in the context of domain
decomposition methods, to~\cite{CockburnGopalakrishnanLazarov2009} in the context of 
Hybridizable Discontinuous Galerkin finite elements and to~\cite{xfem_2010} in the context of
Extended finite element methods.

For every $K \in \mathcal{T}_h$, we denote by $h_K$ the diameter of $K$ and we let $h:= \displaystyle 
\max_{K \in \mathcal{T}_h} h_K$. We denote by $\partial K$ the boundary of $K$ and
by $\mathbf{n}_{\partial K}$ the outward unit normal vector on $\partial K$.
For each pair of neighbouring elements $K_1$ and $K_2$ belonging to $\mathcal{T}_h$, we 
define their common face as $F:= \partial K_1 \cap \partial K_2$. Correspondingly, we introduce
the following sets of faces:
\begin{itemize}
\item $\mathcal{F}_h$: the set of faces belonging to $\mathcal{T}_h$; 
\item $\mathcal{F}_{h,int}$: the subset of faces 
belonging to the interior of $\Omega$ but not to $\Gamma$; 
\item $\mathcal{F}_{h,\Gamma}$: the subset of faces 
belonging to $\Gamma$; 
\item $\mathcal{F}_{h,\Sigma}$: the set of faces belonging
to the domain boundary $\Sigma$. 
\end{itemize}
The set $\mathcal{F}_{h,int}$ can be divided into the sum of the
two disjoint sets $\mathcal{F}_{h,int,1}$ (faces in the interior of $\Omega_1$) 
and $\mathcal{F}_{h,int,2}$ (faces in the interior of $\Omega_2$). Analogously, 
$\mathcal{F}_{h,\Sigma}$ can be divided into the sum of the
two disjoint sets $\mathcal{F}_{h,\Sigma_1}$ (faces on $\Sigma_1$) and $\mathcal{F}_{h,\Sigma_2}$
(faces on $\Sigma_2$). According to these decompositions we have:
\begin{subequations}\label{eq:mesh_sets}
\begin{align}
& \mathcal{F}_{h,int} = \mathcal{F}_{h,int,1} \cup \mathcal{F}_{h,int,2}, & \\
& \mathcal{F}_{h,\Sigma} = \mathcal{F}_{h,\Sigma_1} \cup \mathcal{F}_{h,\Sigma_2}, & \\
& \mathcal{F}_h = \mathcal{F}_{h,int} \cup \mathcal{F}_{h,\Gamma} \cup \mathcal{F}_{h,\Sigma}.
\end{align}
\end{subequations}
We also define the sets:
\begin{subequations}\label{eq:double_faces_sets}
\begin{align}
& \mathcal{F}_{h,1} = \mathcal{F}_{h,int,1} \cup \mathcal{F}_{h,\Sigma_1} \cup \mathcal{F}_{h,\Gamma}, & \\
& \mathcal{F}_{h,2} = \mathcal{F}_{h,int,2} \cup \mathcal{F}_{h,\Sigma_2} \cup \mathcal{F}_{h,\Gamma}, & \\
& \mathcal{F}_{h,\Gamma,1} = \mathcal{F}_{h,1} \cap \mathcal{F}_{h,\Gamma}, & \\
& \mathcal{F}_{h,\Gamma,2} = \mathcal{F}_{h,2} \cap \mathcal{F}_{h,\Gamma}. &
\end{align}
\end{subequations}

%%%%%%%%%%%%%%%%%%%%%%%%%%%%%%%%%%%%%%%
\subsection{The DMH weak formulation}\label{sec:dmh_weak_form}
%%%%%%%%%%%%%%%%%%%%%%%%%%%%%%%%%%%%%%%

For every set $\mathcal{S} \in \mathbb{R}^3$, let us introduce the following subspace of 
$H({\rm div};\mathcal{S})$
\begin{align}
& \mathcal H({\rm div}; \mathcal{S}):= \left\{ \mathbf{q} \in H({\rm div};\mathcal{S}) \, | \, 
\mathbf{q} \cdot \mathbf{n}_{\partial \mathcal{S}} \in L^2(\partial \mathcal{S}) 
\right\} \subset H({\rm div};\mathcal{S}). & \label{eq:Hcaldiv}
\end{align}
Then, we introduce the following spaces on the partitioned triangulation:
\begin{subequations}\label{eq:dmh_spaces}
\begin{align}
& \mathbf{V}_i = \left\{ \mathbf{v} \in (L^2(\Omega_i))^3, \, \mathbf{v}_K \in \mathcal{H}({\rm div}; K) 
\, \forall K \in \mathcal{T}_{h,i} \right\}, & \label{eq:W_q_div_Omega_i_dmh} \\
& V_i = L^2(\Omega_i), & \label{eq:V_space_i_dmh} \\
& M_i = \left\{ \mu \in  L^2(\mathcal{F}_{h,i}), \, \mu_{K_1} = \mu_{K_2} \, \, 
\textrm{on } \, \partial K_1 \cap \partial K_2, \, K_1, K_2 \in \mathcal{T}_{h,i} \right\}, &
\label{eq:M_space_uhat_dmh} \\
& M_{J,i} =  L^2(\mathcal{F}_{h,\Gamma,i}), & \label{eq:M_space_Jcal_dmh} \\
& M_{\lambda} = L^2(\mathcal{F}_{h,\Gamma}). & \label{eq:M_space_lambda_dmh}
\end{align}
\end{subequations}

\begin{remark}\label{rem:double_faces_Gamma}
Functions in $M_1$ and $M_2$ are single-valued on each face belonging to the interior
of $\mathcal{T}_{h,1}$ and $\mathcal{T}_{h,2}$ and on each face belonging to
$\Sigma_1$ and $\Sigma_2$. On the contrary, on each face $F$ belonging to $\mathcal{F}_{h,\Gamma}$ 
we have, in general, $\mu_1|_F \neq \mu_2|_F$, $\mu_1 \in M_1$, $\mu_2 \in M_2$. 
This is the reason why
the faces on $\Gamma$ are attributed
to both sets $\mathcal{F}_{h,1}$ and $\mathcal{F}_{h,2}$ in the definition~\eqref{eq:double_faces_sets}.
The same argument holds for functions belonging to the spaces $M_{J,1}$ and $M_{J,2}$.
\end{remark}

We set $\mathcal{V}:= \mathbf{V}_1 \times V_1 \times \mathbf{V}_2 \times V_2$, 
$\mathcal{Q}:= M_1 \times M_2 \times M_{J,1} \times M_{J,2} \times M_{\lambda}$, and we define
${\tt u}:= (\mathbf{J}_1, u_1, \mathbf{J}_2, u_2) \in \mathcal{V}$, 
${\tt p}:= (\widehat{u}_1, \widehat{u}_2, \mathcal{J}_1, \mathcal{J}_2, \lambda) \in \mathcal{Q}$,
${\tt v}:= (\boldsymbol{\tau}_1, \phi_1, \boldsymbol{\tau}_2, \phi_2) \in \mathcal{V}$ and
${\tt q}:= (\mu_1, \mu_2, \rho_1, \rho_2, \varphi) \in \mathcal{Q}$. 
In the sequel, ${\tt u}$ will represent the vector of the unknowns defined in the interior of each 
mesh element, ${\tt p}$ will represent the vector of the unknowns defined on the faces of the
domain partition whereas ${\tt v}$ and ${\tt q}$ will represent the vectors of the 
test functions belonging to $\mathcal{V}$ and $\mathcal{Q}$, respectively.

Based on definitions~\eqref{eq:dmh_spaces}, 
we endow $\mathcal{V}$ and $\mathcal{Q}$ with the following norms:
\begin{subequations}\label{eq:norm_spaces_dmh}
\begin{align}
& \| {\tt u} \|_{\mathcal{V}} = 
\Big(\sum_{K \in \mathcal{T}_{h,1}} \| \mathbf{J}_1 \|_{H({\rm div};K)}^2 + 
\sum_{K \in \mathcal{T}_{h,2}} \| \mathbf{J}_2 \|_{H({\rm div};K)}^2 + 
\| u_1 \|_{0,\Omega_1}^2 + \| u_2 \|_{0,\Omega_2}^2\Big)^{1/2}, \label{eq:norm_V_dmh} \\
& \| {\tt p} \|_{\mathcal{Q}} = 
\Big(\sum_{F \in \mathcal{F}_{h,1}} \| \widehat{u}_1 \|_{0,F}^2 
+ \sum_{F \in \mathcal{F}_{h,2}} \| \widehat{u}_2 \|_{0,F}^2
+ \sum_{F \in \mathcal{F}_{h,\Gamma,1}} \| \mathcal{J}_1 \|_{0,F}^2 \nonumber \\
& \qquad \qquad + \sum_{F \in \mathcal{F}_{h,\Gamma,2}} \| \mathcal{J}_2 \|_{0,F}^2 
+ \sum_{F \in \mathcal{F}_{h,\Gamma}} \| \lambda \|_{0,F}^2\Big)^{1/2}. \label{eq:norm_Q_dmh}
\end{align}
\end{subequations}
For all ${\tt u} \in \mathcal{V}$, ${\tt v} \in \mathcal{V}$, and for all
${\tt p} \in \mathcal{Q}$, ${\tt q} \in \mathcal{Q}$, we introduce the following bilinear forms:
\begin{subequations}\label{eq:bilinear_forms_dmh}
\begin{align}
 a({\tt u}, {\tt v}) : = & 
(\boldsymbol{\mu}^{-1} \mathbf{J}_1, \boldsymbol{\tau}_1)_1 
- (\boldsymbol{\mu}^{-1} \mathbf{v} u_1, \boldsymbol{\tau}_1)_1 
+ (r u_1,\phi_1)_1 \nonumber \\
& - \sum_{K \in \mathcal{T}_{h,1}} (u_1, {\rm div}\boldsymbol{\tau}_1)_K 
+ \sum_{K \in \mathcal{T}_{h,1}} (\phi_1, {\rm div} \mathbf{J}_1)_K \nonumber \\
& + (\boldsymbol{\mu}^{-1} \mathbf{J}_2, \boldsymbol{\tau}_2)_2 
- (\boldsymbol{\mu}^{-1} \mathbf{v} u_2, \boldsymbol{\tau}_2)_2 
+ (r u_2, \phi_2)_2 \nonumber \\
& - \sum_{K \in \mathcal{T}_{h,2}} (u_2, {\rm div}\boldsymbol{\tau}_2)_K 
+ \sum_{K \in \mathcal{T}_{h,2}} (\phi_2, {\rm div} \mathbf{J}_2)_2, \label{eq:bilinear_A_dmh} \\
 b({\tt u}, {\tt q}): = & 
\sum_{K \in \mathcal{T}_{h,1}} 
(\mathbf{J}_1 \cdot \mathbf{n}_{\partial K}, \mu_1)_{\partial K} + 
\sum_{K \in \mathcal{T}_{h,2}} (\mathbf{J}_2 \cdot \mathbf{n}_{\partial K}, 
\mu_2 )_{\partial K}, \label{eq:bilinear_B_dmh} \\
 c({\tt p}, {\tt q}): = &  
\sum_{F \in \mathcal{F}_{h,\Sigma_1}} (\alpha \widehat{u}_1, \mu_1)_F
+ \sum_{F \in \mathcal{F}_{h,\Sigma_2}} (\alpha \widehat{u}_2, \mu_2)_F \nonumber \\
& - \sum_{F \in \mathcal{F}_{h,\Gamma,1}} (\rho_1, \widehat{u}_1)_F 
- \sum_{F \in \mathcal{F}_{h,\Gamma,2}} (\rho_2, \widehat{u}_2)_F \nonumber \\
&+ \sum_{F \in \mathcal{F}_{h,\Gamma,1}} (\rho_1, \lambda)_F
+ \sum_{F \in \mathcal{F}_{h,\Gamma,2}} (\rho_2, \kappa \lambda)_F \nonumber \\
& + \sum_{F \in \mathcal{F}_{h,\Gamma,1}} (\mathcal{J}_1, \mu_1)_F 
+ \sum_{F \in \mathcal{F}_{h,\Gamma,2}} (\mathcal{J}_2, \mu_2)_F \nonumber \\
&- \sum_{F \in \mathcal{F}_{h,\Gamma}} (\mathcal{J}_1, \varphi)_F 
- \sum_{F \in \mathcal{F}_{h,\Gamma}} (\mathcal{J}_2, \varphi)_F,
\label{eq:bilinear_C_dmh}
\end{align}
\end{subequations}
and the following linear functionals:
\begin{subequations}\label{eq:linear_forms_dmh}
\begin{align}
& F({\tt v}): = 
(g, \phi_1)_1 + (g, \phi_2)_2, \label{eq:functional_F_dmh} \\
& G({\tt q}): = 
-\sum_{F \in \mathcal{F}_{h,\Sigma_1}} (\beta, \mu_1)_F 
-\sum_{F \in \mathcal{F}_{h,\Sigma_2}} (\beta, \mu_2)_F
-\sum_{F \in \mathcal{F}_{h,\Gamma}} (\sigma, \varphi)_F.
\label{eq:functional_G_dmh} 
\end{align}
\end{subequations}

\noindent Finally, the DMH weak formulation of problem~\eqref{eq:tx_model} can be written in abstract form as stated below.

\begin{definition}[DMH weak formulation]\label{def:DMH_wf}
\begin{subequations}\label{eq:saddle_point_problem_dmh}
Given the linear functionals $F: \mathcal{V}\rightarrow \mathbb R$ and $G: \mathcal{Q}\rightarrow\mathbb R$
defined in~\eqref{eq:linear_forms_dmh}, 
find ${\tt u} = (\mathbf{J}_1, u_1, \mathbf{J}_2, u_2) \in \mathcal{V}$ and 
${\tt p} = (\widehat{u}_1, \widehat{u}_2, \mathcal{J}_1, \mathcal{J}_2, \lambda) \in \mathcal{Q}$ such that:
\begin{align}
& a({\tt u}, {\tt v})  + b({\tt v}, {\tt p}) = F({\tt v}) \qquad \forall 
{\tt v} = (\boldsymbol{\tau}_1, \phi_1, \boldsymbol{\tau}_2, \phi_2) \in \mathcal{V}, \\
& b({\tt u}, {\tt q})  - c({\tt p}, {\tt q}) = G({\tt q}) \qquad \forall {\tt q} 
= (\mu_1, \mu_2, \rho_1, \rho_2, \varphi) \in \mathcal{Q},
\end{align}
where $\mathcal{V}:= \mathbf{V}_1 \times V_1 \times \mathbf{V}_2 \times V_2$, 
$\mathcal{Q}:= M_1 \times M_2 \times M_{J,1} \times M_{J,1} \times M_{\lambda}$ and 
the bilinear forms $a$, $b$ and $c$ are defined in~\eqref{eq:bilinear_forms_dmh}. 
\end{subequations}
\end{definition}

\begin{remark}
System~\eqref{eq:saddle_point_problem_dmh} is an instance of 
abstract generalized saddle-point problems~\eqref{eq:abstract_SP_problem}.
We notice that $a(\cdot, \cdot)$ and $b(\cdot, \cdot)$ are the standard bilinear forms in a dual mixed 
hybrid formulation
of a second-order boundary value problem with an advection-diffusion-reaction operator
(see~\cite{douglas_roberts_1985,arbogast_and_chen_1995}). On the contrary, the bilinear form 
$c(\cdot, \cdot)$ and the right-hand side $G(\cdot)$ contain the contributions of the 
Lagrange multipliers, conceptually borrowed from the 3F formulation, which allow us to enforce the 
transmission conditions~\eqref{eq:tx_flux} and~\eqref{eq:tx_segr}. These contributions represent 
a novel aspect of the DMH method proposed in this article.
\end{remark}

\begin{remark}
Using the fact that functions
$\mu_i \in M_i$ are single-valued on each face of $\mathcal{F}_{h,i}$, $i=1,2$, we see 
that the bilinear form $b({\tt u}, {\tt q})$ defined in~\eqref{eq:bilinear_B_dmh} can be written
in the following alternative (equivalent) manner
\begin{align}
& b({\tt u}, {\tt q}): = 
\sum_{i=1}^2 \sum_{K \in \mathcal{T}_{h,i}} 
(\mathbf{J}_i \cdot \mathbf{n}_{\partial K}, \mu_i)_{\partial K}
= \sum_{i=1}^2 \left( \sum_{F \in \mathcal{F}_{h,int,i}} (\jmp{\mathbf{J}_i}_F, \mu_i)_F \right. & \nonumber \\
& \left. + \sum_{F \in \mathcal{F}_{h,\Sigma_i}} (\mathbf{J}_i \cdot \mathbf{n}_{\partial K}, \mu_i)_F
+ \sum_{F \in \mathcal{F}_{h,\Gamma,i}} (\mathbf{J}_i \cdot \mathbf{n}_{\partial K}, \mu_i)_F
\right), &
\label{eq:b_form_faces}
\end{align}
where 
\begin{align}
& \jmp{\mathbf{J}_i}_F:= \mathbf{J}_i^+|_F \cdot \mathbf{n}_{F}^+ 
+ \mathbf{J}_i^-|_F \cdot \mathbf{n}_{F}^- & \label{eq:jump}
\end{align}
is the jump of $\mathbf{J}_i$ across the face $F \in \mathcal{F}_{h,int,i}$, $i=1,2$. 
In Eq.~\eqref{eq:jump}, $\mathbf{J}_i^+|_F$ and $\mathbf{J}_i^-|_F$ denote 
the trace on $F$ of the restrictions of 
$\mathbf{J}_i$ to the pair of elements $K^+$ and $K^-$ belonging to $\mathcal{T}_{h,i}$ such that 
$F = \partial K^+ \cap \partial K^-$, whereas $\mathbf{n}_{F}^+$ and $\mathbf{n}_{F}^-$
are the restrictions to $F$ of the outward unit normal vectors 
$\mathbf{n}_{\partial K}^+$ and $\mathbf{n}_{\partial K}^-$, respectively, with 
$\mathbf{n}_{F}^+ + \mathbf{n}_{F}^- = \mathbf{0}$.
\end{remark}

\subsection{Well-posedness of the DMH weak formulation}\label{sec:well-posedness_dmh}

The well-posedeness of problem~\eqref{eq:saddle_point_problem_dmh} 
is the main result of this section and is a lemma of the abstract Theorem A.1 reported in~\ref{sec:saddle_point_continuous}.
%~\ref{theo:existence_uniqueness_abstract}.
\begin{lemma}[Well posedness of~\eqref{eq:saddle_point_problem_dmh}]
\label{theorem:wellposedness_dmh_weak}
Assume that $r \in L^{\infty}(\Omega)$ with 
\begin{align}
& 0 < r_{min} \leq r(\mathbf{x}) \leq r_{max} < +\infty
& \textrm{for a.e.} \,\mathbf{x} \in \Omega. & \label{eq:r_bounds}
\end{align}
Assume that $g \in L^2(\Omega)$, $\mathbf{v} \in (L^\infty(\Omega))^3$ and $\alpha \in L^\infty(\Sigma)$
with $\alpha > 0$ a.e. on $\Sigma$. Assume also that
\begin{align}
& \| \mathbf{v} \|_{\infty, \Omega} < 2 \mu_{min} c_0 & \textrm{for a.e.} \,\mathbf{x} 
\in \Omega, & \label{eq:v_bounds}
\end{align} 
where $c_0:= \min \left\{ \mu_{max}^{-1}, r_{min} \right\}$, and that
\begin{align}
& \max \left( \kappa, \| \alpha \|_{\infty, \Sigma} \right) < \Frac{1}{C^\ast \mathcal{M}}, & 
\label{eq:kappa_bounds}
\end{align} 
where $C^\ast$ is the smallest trace constant over $\mathcal{T}_h$ and 
\begin{align}
& \mathcal{M}:= \left( 2+ r_{max} + \Frac{1+ \| \mathbf{v} \|_{\infty,\Omega}}{\mu_{min}} \right)
\left( \Frac{c_0 + 2 + r_{max} + \Frac{1}{\mu_{min}} + \Frac{\| \mathbf{v} \|_{\infty,\Omega}}{2 \mu_{min}}}
{c_0 - \Frac{\| \mathbf{v} \|_{\infty,\Omega}}{2 \mu_{min}}}\right). & \label{eq:Mcal}
\end{align}
Then, the DMH weak formulation~\eqref{eq:saddle_point_problem_dmh} of 
problem~\eqref{eq:tx_model} has a unique solution.
\end{lemma}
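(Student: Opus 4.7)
The plan is to apply the abstract theory for generalized saddle-point problems stated in Theorem A.1 of appendix \ref{sec:saddle_point_continuous}. Formally, problem~\eqref{eq:saddle_point_problem_dmh} is of the expected form with unknown pair $({\tt u}, {\tt p}) \in \mathcal{V} \times \mathcal{Q}$, so the proof reduces to verifying (i) continuity of $a$, $b$, $c$, (ii) coercivity of $a$ on $\mathrm{Ker}\,b$, (iii) an inf-sup condition for $b$, and (iv) an appropriate positivity/smallness condition on $c$, all with respect to the norms~\eqref{eq:norm_spaces_dmh}.

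First I would check continuity. For $a$, apply Cauchy-Schwarz elementwise together with the lower and upper bounds~\eqref{eq:mu_bound} on $\mu$, the bounds~\eqref{eq:r_bounds} on $r$ and the $L^\infty$ bound on $\mathbf{v}$; the divergence terms are controlled directly by the $H(\mathrm{div};K)$ component of $\|\cdot\|_\mathcal{V}$. For $b$, use that $\mathbf{J}_i\cdot\mathbf{n}_{\partial K}\in L^2(\partial K)$ by the definition~\eqref{eq:Hcaldiv} of $\mathcal{H}(\mathrm{div};K)$ together with the local trace inequality with constant $C^\ast$, so that $b$ is controlled by $\|{\tt u}\|_\mathcal{V}\|{\tt q}\|_\mathcal{Q}$. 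For $c$, the bound follows face by face using $\alpha\in L^\infty(\Sigma)$ and the constant $\kappa$.

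Next I would establish coercivity of $a$ on $\mathrm{Ker}\,b$. Testing with ${\tt v}={\tt u}$ yields
\[
a({\tt u},{\tt u}) = \sum_{i=1}^{2}\Big[(\boldsymbol{\mu}^{-1}\mathbf{J}_i,\mathbf{J}_i)_i + (r u_i,u_i)_i - (\boldsymbol{\mu}^{-1}\mathbf{v} u_i,\mathbf{J}_i)_i\Big].
\]
The diffusive and reactive terms give a lower bound by $c_0(\|\mathbf{J}_i\|_{0,\Omega_i}^2+\|u_i\|_{0,\Omega_i}^2)$ with $c_0=\min\{\mu_{\max}^{-1},r_{\min}\}$, while the advective cross term is absorbed via Young's inequality, losing a factor $\|\mathbf{v}\|_\infty/(2\mu_{\min})$. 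The smallness assumption~\eqref{eq:v_bounds} ensures the residual constant is strictly positive. On $\mathrm{Ker}\,b$, the divergence component of $\|{\tt u}\|_\mathcal{V}$ is recovered from $\mathrm{div}\,\mathbf{J}_i=g-ru_i$ via~\eqref{eq:continuity}, thereby closing the estimate in the full norm.

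Then I would prove the inf-sup condition for $b$. Given ${\tt p}\in\mathcal{Q}$ with nonzero norm, the strategy is to construct, element by element, fluxes $\boldsymbol{\tau}_i$ whose normal trace on each face $F\subset\partial K$ realizes the assigned multiplier $\widehat{u}_i$ (for interior and boundary faces) or a combination involving $\mathcal{J}_i$ and $\lambda$ on faces of $\mathcal{F}_{h,\Gamma}$, together with suitable scalar test functions $\phi_i$. This is the standard local construction for hybrid formulations, decoupled across elements thanks to the broken nature of $\mathcal{V}$, and gives a uniform inf-sup constant depending on $C^\ast$ and the geometric constants of the family $\{\mathcal{T}_h\}_{h>0}$. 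The hardest bookkeeping here concerns the faces of $\mathcal{F}_{h,\Gamma}$, where distinct multipliers from $M_1$, $M_2$, $M_{J,1}$, $M_{J,2}$ and $M_\lambda$ are simultaneously active and must be tested independently; the doubling of interface faces in~\eqref{eq:double_faces_sets} is precisely what makes this possible.

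Finally, the smallness condition~\eqref{eq:kappa_bounds} on $\max(\kappa,\|\alpha\|_{\infty,\Sigma})$, combined with the constant $\mathcal{M}$ in~\eqref{eq:Mcal} (which bundles the continuity constants of $a$, $b$ and the reciprocal of the coercivity constant produced in the previous step), guarantees that the bilinear form $c$ acts as a small perturbation compatible with the generalized saddle-point structure. Plugging these verifications into Theorem~A.1 yields existence and uniqueness of $({\tt u},{\tt p})\in\mathcal{V}\times\mathcal{Q}$. I expect the inf-sup step for $b$ at the interface $\Gamma$ to be the principal technical obstacle, since it is where the novelty of the DMH-3F coupling is concentrated and where the compatibility among the five families of multipliers in $\mathcal{Q}$ has to be checked explicitly.
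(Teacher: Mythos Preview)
Your overall plan---verify the hypotheses of Theorem~A.1---is exactly what the paper does, and your treatment of continuity and of the smallness condition~\eqref{eq:kappa_bounds} matches the paper. Two of the four steps, however, diverge from the paper in ways that matter.

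\textbf{Coercivity of $a$ on $\mathcal{V}^0$.} Your argument for recovering the divergence contribution to $\|{\tt u}\|_{\mathcal{V}}$ is incorrect: you write that on $\mathrm{Ker}\,b$ one has $\mathrm{div}\,\mathbf{J}_i = g - r u_i$ via~\eqref{eq:continuity}, but that is the equation to be solved, not a property of the kernel. Elements of $\mathcal{V}^0$ are test functions, not solutions, so no PDE relation is available there. The paper instead identifies (citing~\cite[Section~IV.1.4]{brezzifortin1991}) $\mathcal{V}^0$ as consisting of fluxes $\mathbf{J}_i \in \mathcal{H}(\mathrm{div};\Omega_i)$ with $\mathrm{div}\,\mathbf{J}_i = 0$ and $\mathbf{J}_i\cdot\mathbf{n}|_{\partial\Omega_i}=0$; on this set $\|\mathbf{J}_i\|_{H(\mathrm{div};K)}=\|\mathbf{J}_i\|_{0,K}$, so the $L^2$ lower bound you correctly derive (with constant $c_0 - \|\mathbf{v}\|_\infty/(2\mu_{\min})$) is already coercivity in the full $\mathcal{V}$-norm.

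\textbf{Inf-sup for $b$.} You misread the structure of $b$: inspecting~\eqref{eq:bilinear_B_dmh}, $b({\tt u},{\tt q})$ pairs only $\mathbf{J}_i$ with $\mu_i$; the interface multipliers $\rho_i$, $\varphi$ (i.e.\ $\mathcal{J}_i$, $\lambda$) do not appear in $b$ at all. Hence the ``principal technical obstacle'' you anticipate at $\Gamma$, and the construction you sketch involving combinations of $\mathcal{J}_i$ and $\lambda$, are not part of this step. The paper's construction is in fact simpler than yours: given ${\tt P}=(\mu_1,\mu_2,0,0,0)$ one solves on each $K$ the auxiliary problem $-\mathrm{div}\,\nabla w_i + w_i = 0$ in $K$, $w_i|_{\partial K}=\mu_i$, sets $\mathbf{J}_i=\nabla w_i$, and obtains $b({\tt U},{\tt P})=\|{\tt U}\|_{\mathcal{V}}^2$ together with $\|{\tt U}\|_{\mathcal{V}}\ge (C^\ast)^{-1}\|{\tt P}\|_{\mathcal{Q}}$ from the trace inequality, giving $k_b=(C^\ast)^{-1}$. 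The interface multipliers enter only through $c$ and are controlled by the smallness condition~\eqref{eq:kappa_bounds}, not through the inf-sup.
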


\begin{proof}
We apply Theorem A.1 reported in~\ref{sec:saddle_point_continuous}.
%~\ref{theo:existence_uniqueness_abstract}. 
The first step of the proof is the verification of Assumptions~\eqref{eq:assumptions_continuity_forms}.
Using the discrete Cauchy-Schwarz inequality, the bound~\eqref{eq:mu_bound} and 
definitions~\eqref{eq:norm_spaces_dmh}, we see that~\eqref{eq:assumptions_continuity_forms} are
satisfied by taking:
\begin{subequations}\label{eq:constants_dmh}
\begin{align}
& M_a = \Frac{1 + \| \mathbf{v} \|_{\infty, \Omega}}{\mu_{min}}
+  \| r \|_{\infty, \Omega} + 2, & \label{eq:M_a_DMH} \\
& M_b = 1, & \label{eq:M_b_DMH} \\
& M_c = \max\left\{ \kappa, \| \alpha \|_{\infty,\Sigma}\right\}. & \label{eq:M_c_DMH}
\end{align}
\end{subequations}

The second step of the proof is the verification of~\eqref{eq:weak_coercivity_a}. Building upon the analysis
of~\cite[Section IV.1.4]{brezzifortin1991}, we see that
\begin{align*}
& \mathcal{V}^0=\left\{ (\mathbf{q}_1, \mathbf{q}_2) \in (\mathcal{H}({\rm div};\Omega_1) \times 
\mathcal{H}({\rm div};\Omega_2)), \right. \nonumber \\
& \left. \qquad {\rm with} \, \, {\rm div}\mathbf{q}_i = 0,  \, \, {\rm and} \, \, 
\mathbf{q}_i \cdot \mathbf{n}|_{\partial \Omega_i} = 0, \, i=1,2 \right\}.
\end{align*}
Then, for all ${\tt u} \in \mathcal{V}^0$, using Young's inequality we obtain
\begin{align*}
& a({\tt u}, {\tt u}) = (\mu^{-1} \mathbf{J}_1, \mathbf{J}_1)_1 +  (\mu^{-1} \mathbf{J}_2, \mathbf{J}_2)_2
+ (r u_1, u_1)_1 + (r u_2, u_2)_2 \nonumber \\
& - (\mu^{-1} \mathbf{v} u_1, \mathbf{J}_1)_1 - (\mu^{-1} \mathbf{v} u_2, \mathbf{J}_2)_2 \nonumber \\
& \geq \mu_{max}^{-1} \left[ \| \mathbf{J}_1 \|_{0,\Omega_1}^2 + \| \mathbf{J}_2 \|_{0,\Omega_2}^2 \right]
+ r_{min} \left[ \| u_1 \|_{0,\Omega_1}^2 + \| u_2 \|_{0,\Omega_2}^2 \right] \nonumber \\
& - \Frac{\| \mathbf{v} \|_{\infty, \Omega}}{2 \mu_{min}} 
\left[ \| u_1 \|_{0,\Omega_1}^2 + \| u_2 \|_{0,\Omega_2}^2 +
\| \mathbf{J}_1 \|_{0,\Omega_1}^2 + \| \mathbf{J}_2 \|_{0,\Omega_2}^2 \right] \nonumber \\
& \geq c_0 \left[ \| u_1 \|_{0,\Omega_1}^2 + \| u_2 \|_{0,\Omega_2}^2 \right] +
c_1 \left[ \| \mathbf{J}_1 \|_{0,\Omega_1}^2 + \| \mathbf{J}_2 \|_{0,\Omega_2}^2 \right]
\end{align*}
having set
\begin{align*}
& c_0:= r_{min} - \Frac{\| \mathbf{v} \|_{\infty, \Omega}}{2 \mu_{min}}, \qquad
c_1:= \mu_{max}^{-1} - \Frac{\| \mathbf{v} \|_{\infty, \Omega}}{2 \mu_{min}}.
\end{align*}
If assumption~\eqref{eq:v_bounds} holds, then $c_0>0$ and $c_1>0$ 
and~\eqref{eq:weak_coercivity_a} is satisfied by taking $k_a= \min \left\{ c_0, c_1 \right\}$.

The third step of the proof is the verification of~\eqref{eq:weak_coercivity_b}. To this end, we
set ${\tt U}:= (\mathbf{J}_1, 0, \mathbf{J}_2, 0) \in \mathcal{V}$, and 
for any given ${\tt P}:= (\mu_1, \mu_2, 0, 0, 0) \in \mathcal{Q}$ 
we consider the following auxiliary boundary value problems:
\begin{subequations}\label{eq:auxiliary_bvp_dmh}
\begin{align}
& - {\rm div}\mathbf{J}_i + w_i = 0 & {\rm in} \, K \in \mathcal{T}_{h,i} \quad i=1,2, 
\label{eq:equation_auxiliary_dmh} \\
& \mathbf{J}_i = \nabla w_i & {\rm in} \, K \in \mathcal{T}_{h,i} \quad i=1,2, 
\label{eq:equation_flux_dmh} \\
& w_i|_{\partial K} = \mu_i & {\rm on} \, \partial K \quad i=1,2. \label{eq:Dirichlet_BC_auxiliary_dmh}
\end{align}
\end{subequations}
The application of the dual mixed method to~\eqref{eq:auxiliary_bvp_dmh} 
and the use of Green's formula leads to the following localized saddle-point problem:
\begin{subequations}\label{eq:aux_mixed_dmh}
\begin{align}
& (\mathbf{J}_i, \mathbf{q}_i)_K + (w_i, {\rm div}\mathbf{q}_i)_K = (\mu_i, \mathbf{q}_i
\cdot \mathbf{n}_{\partial K})_{\partial K} & \mathbf{q}_i \in \mathcal{H}({\rm div}; K), 
\label{eq:mixed_1} \\
& (-{\rm div}\mathbf{J}_i + w_i, \phi_i)_K = 0     & \phi_i \in L^2(K). \label{eq:mixed_2}
\end{align}
\end{subequations}
Taking $\mathbf{q}_i = \mathbf{J}_i$ in~\eqref{eq:mixed_1} and $\phi_i = {\rm div}\mathbf{J}_i$ in~\eqref{eq:mixed_2}
we obtain
\begin{align}
& (\mu_i, \mathbf{J}_i \cdot \mathbf{n}_{\partial K})_{\partial K}
= \| \mathbf{J}_i \|_{H({\rm div}; K)}^2 \qquad \forall K \in \mathcal{T}_{h,i}, \, \, i=1,2. &
\label{eq:norm_H_minus_one_half}
\end{align}
Summing over the elements and over the two domains $\Omega_i$, $i=1,2$, and 
using~\eqref{eq:norm_H_minus_one_half}, we obtain:
\begin{align}
& b({\tt U}, {\tt P}) = \sum_{K \in \mathcal{T}_{h,1}} 
(\mathbf{J}_1 \cdot \mathbf{n}_{\partial K}, \mu_1)_{\partial K} + 
\sum_{K \in \mathcal{T}_{h,2}} (\mathbf{J}_2 \cdot \mathbf{n}_{\partial K}, 
\mu_2)_{\partial K} \nonumber \\
& = \sum_{K \in \mathcal{T}_{h,1}} \| \mathbf{J}_1 \|^2_{H({\rm div}; K)} + 
\sum_{K \in \mathcal{T}_{h,2}} \| \mathbf{J}_2 \|^2_{H({\rm div}; K)} 
% \nonumber \\ & \geq \sum_{K \in \mathcal{T}_{h,1}} \| \mathbf{J}_1 \|^q_{q,{\rm div}; K} + 
% \sum_{K \in \mathcal{T}_{h,2}} \| \mathbf{J}_2 \|^q_{q,{\rm div}; K}
\equiv \| {\tt U} \|_{\mathcal{V}}^2.
\label{eq:calcolo_b_dmh}
\end{align}
Using~\eqref{eq:equation_flux_dmh} and the definition of norm in $H({\rm div};K)$, 
we obtain the following identity for all $w_i \in H^1(K)$, $i=1,2$
\begin{align}
& \| \mathbf{J}_i \|^2_{H({\rm div}; K)} = \| \nabla w_i \|_{0,K}^2 + \| w_i \|_{0,K}^2
= \| w_i \|_{H^1(K)}^2. & \label{eq:norm_H1_K}
\end{align}
Substituting~\eqref{eq:norm_H1_K} into~\eqref{eq:calcolo_b_dmh} and applying
the trace inequality over each element $K$ yields
$$
\| {\tt U} \|_{\mathcal{V}} \geq  (C^\ast)^{-1} 
\left( \sum_{K \in \mathcal{T}_{h,1}} \| \mu_1 \|_{0, \partial K}^2
+ \sum_{K \in \mathcal{T}_{h,2}} \| \mu_2 \|_{0,\partial K}^2 \right)^{1/2},
$$
where $C^\ast$ is the smallest trace constant over $\mathcal{T}_h$.
Replacing the above relation into~\eqref{eq:calcolo_b_dmh}, we 
see that~\eqref{eq:weak_coercivity_b} is satisfied by taking $k_b=(C^\ast)^{-1}$.

The fourth step of the proof is the verification of~\eqref{eq:smallness_constants}. 
Inserting the values of the continuity constants $M_a$ and $M_c$ and of the coercivity constants
$k_a$ and $k_b$ found above into the definition of $\delta$ in~\eqref{eq:smallness_constants}, we see that
if~\eqref{eq:kappa_bounds} holds then $\delta < 1$ and~\eqref{eq:smallness_constants} is satisfied.
This completes the proof.
\end{proof}

\begin{remark}\label{rem:equivalence_saddle_point_abstract}
Having proved that~\eqref{eq:saddle_point_problem_dmh} is uniquely solvable, we see 
that~\eqref{eq:saddle_point_problem_dmh} can be written in the equivalent form: 
given $F \in \mathcal{V}^{\prime}$, find ${\tt u} \in \mathcal{V}^{\mathcal{H}}$ such that
\begin{subequations}\label{eq:saddle_equivalent}
\begin{align}
& a({\tt u}, {\tt v}) = F({\tt v}) \qquad \forall {\tt v} 
\in \mathcal{V}^0, & \label{eq:saddle_point_equivalent}
\end{align}
where, for ${\tt p}$ given in $\mathcal{Q}$, we set
\begin{align}
& \mathcal{H}({\tt q}):= G({\tt q}) + c({\tt p},{\tt q}) 
\qquad \forall {\tt q} \in \mathcal{Q}, & \label{eq:new_functional_H}
\end{align}
and $\mathcal{V}^{\mathcal{H}}$ is the affine manifold
\begin{align}
& \mathcal{V}^{\mathcal{H}} := \left\{ {\tt v} \in \mathcal{V}, \, \, 
b({\tt v}, {\tt q})= \mathcal{H}({\tt q})  \, \, \forall {\tt q} \in \mathcal{Q} \right\}. \label{eq:bspace}
\end{align}
\end{subequations}
\end{remark}

\section{The DMH Galerkin finite element approximation}\label{sec:dmh_Galerkin_FE_approximation}
In this section we illustrate the Galerkin finite element approximation of 
the weak DMH formulation of problem~\eqref{eq:tx_model}.
To this end, in Section~\ref{sec:fem_spaces} we introduce  the local and global finite element 
spaces. Then, in Section~\ref{sec:DMH_FE_method} we use the abstract theory 
reported in~\ref{sec:saddle_point_approx}
to prove that the DMH formulation admits a unique solution 
and exhibits optimal convergence with respect to the discretization parameter $h$. 

\subsection{Finite element spaces}\label{sec:fem_spaces}
For any set $\mathcal{S}$ (in one, two or three spatial dimensions), 
we indicate by $\mathbb{P}_r(\mathcal{S})$, $r \geq 0$,
the space of polynomials of degree $\leq r$ defined on $\mathcal{S}$.
Moreover, we define $RT_0(K) := 
(\mathbb{P}_0(K))^3 \oplus \mathbb{P}_0(K) \mathbf{x}$
and we define the following local polynomial spaces associated with the triangulation $\mathcal{T}_h$:
\begin{subequations}\label{eq:local_spaces}
\begin{align}
%& \mathbf{V}(K):=\left\{ \mathbf{v} \in RT_0(K) = 
%\left((\mathbb{P}_0(K))^3 \oplus \mathbb{P}_0(K) \mathbf{x}\right)  \, \, 
%\forall K \in \mathcal{T}_h \right\}, & \label{eq:Vvect_K} \\
& \mathbf{V}(K):=\left\{ \mathbf{v} \in RT_0(K)   \, \, 
\forall K \in \mathcal{T}_h \right\}, & \label{eq:Vvect_K} \\
& V(K):=\left\{ v \in \mathbb{P}_0(K) \, \, 
\forall K \in \mathcal{T}_h \right\}, & \label{eq:V_K} \\
& M(F):=\left\{ \mu \in \mathbb{P}_0(F) \, \, 
\forall F \in \mathcal{F}_h \right\}. & \label{eq:M_F}
\end{align}
\end{subequations}
\begin{figure}[h!]
\centering
\includegraphics[width=0.45\textwidth]{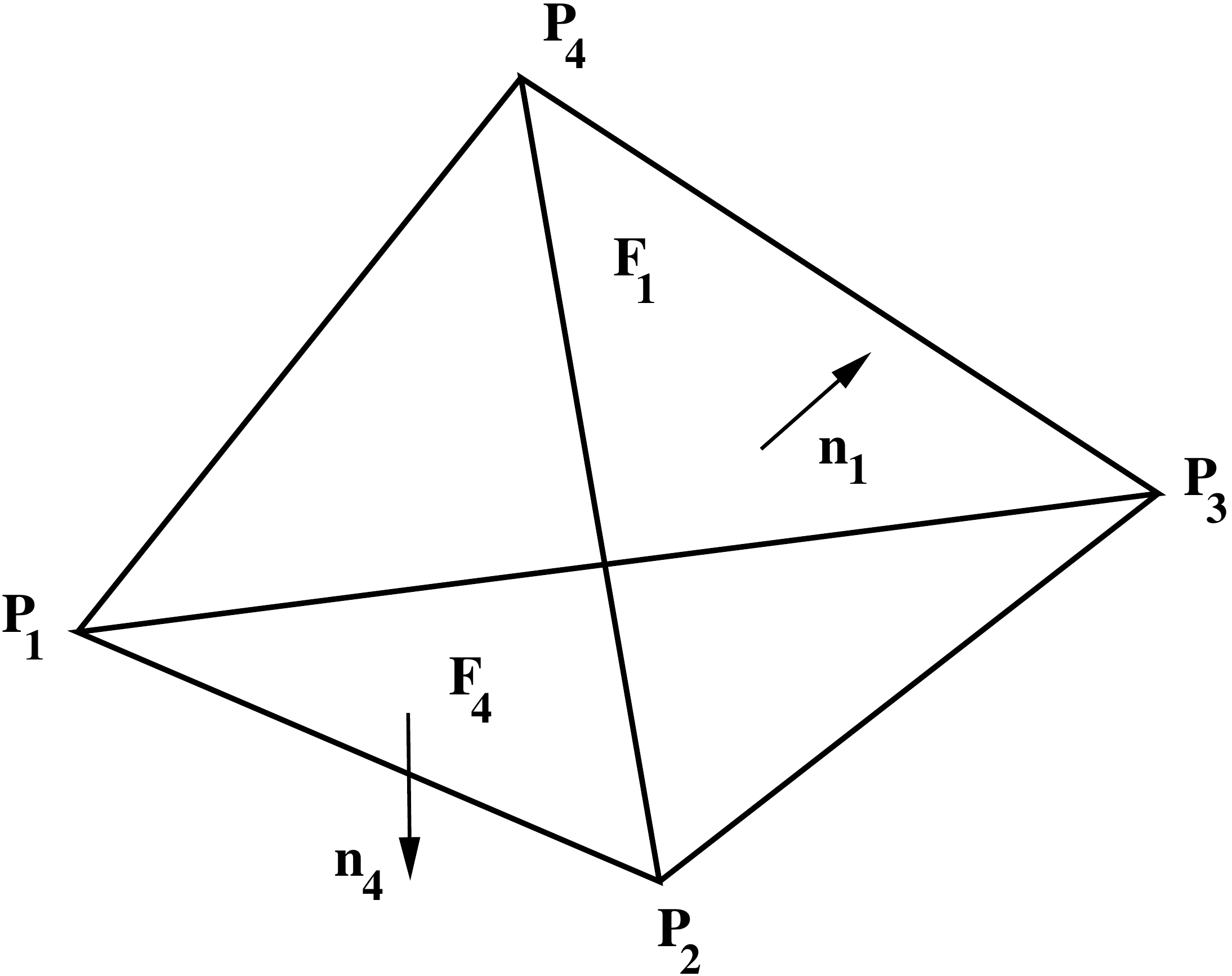}
\caption{The element $K$ and the local geometrical notation.}
\label{fig:tet_K}
\end{figure}

The above spaces are local because they are defined within each element $K$ of the triangulation 
$\mathcal{T}_h$ and for each face $F$ of the set of faces $\mathcal{F}_h$.
We remark that:
\begin{subequations}\label{eq:dim_local_spaces}
\begin{align}
& {\rm dim}(\mathbf{V}(K)) = 4, & \label{eq:dim_Vvect_K} \\
& {\rm dim}(V(K))= {\rm dim}(M(F)) = 1. & \label{eq:dim_V_K_M_F}
\end{align}
\end{subequations}
The degrees of freedom for a vector-valued function $\mathbf{v} \in \mathbf{V}(K)$ are the
fluxes of $\mathbf{v}$ across each face of the boundary $\partial K$
\begin{align}
& \Phi^K_i(\mathbf{v}):= \int_{F_i \in \partial K} \mathbf{v} \cdot \mathbf{n}_i \, dF & i=1,2,3,4, 
\label{eq:dofs_Vvect_K}
\end{align}
where face $F_i$ is opposite to vertex $P_i$, $i=1,2,3,4$, and $\mathbf{n}_i$ is the outward unit
normal vector on $F_i$ (see Figure~\ref{fig:tet_K}). 

\noindent
Using definition~\eqref{eq:dofs_Vvect_K}
we can write the generic function $\mathbf{v} \in \mathbf{V}(K)$ as
\begin{align}
& \mathbf{v}(\mathbf{x}) = \sum_{i=1}^4 \Phi^K_i(\mathbf{v}) \boldsymbol{\tau}_i(\mathbf{x}), 
& \label{eq:expansion_v}
\end{align}
where 
\begin{align}
& \boldsymbol{\tau}_i(\mathbf{x}):= \Frac{\mathbf{x} - \mathbf{x}_i}{3 |K|} & i=1,2,3,4, 
\label{eq:RT0_local_basis} 
\end{align}
are the local basis functions of the Raviart-Thomas/Nedelec 
finite element space of lowest order (RT0,
see~\cite{RaviartThomas1977,Nedelec1980,RobertsThomas1991}), 
having denoted by $\mathbf{x}_i$ and $|K|$ the coordinates of vertex $P_i$ and
the volume of $K$, respectively.
The degree of freedom of a function $v$ belonging to $V(K)$ is the value of $v$ at the barycenter
of $K$, whereas the degree of freedom of a function $\mu$ belonging to $M(F)$ is the value of $\mu$ 
at the barycenter of $F$.

\noindent
In order to construct the finite dimensional spaces associated with~\eqref{eq:local_spaces} to be used
for the internal approximation of the functional spaces~\eqref{eq:dmh_spaces},
we distinguish between the spaces of functions defined inside each element of $\mathcal{T}_h$
and the spaces of functions defined on each face of $\mathcal{F}_h$. For $i=1,2$, we have:
\begin{subequations}\label{eq:global_spaces_Tau_h_i}
\begin{align}
& \mathbf{V}_{i,h} = \left\{ \boldsymbol{\tau} \in \mathbf{V}_{i}, \, 
\boldsymbol{\tau}|_K \in \mathbf{V}(K) \, \, \forall K \in \mathcal{T}_{h,i} \right\}, 
\, i=1,2, \label{eq:spaceVvect_1_2} \\
& V_{i,h} = \left\{ \phi \in V_{i}, \, \phi|_K \in V(K) \, \, \forall K 
\in \mathcal{T}_{h,i} \right\}, \, i=1,2, \label{eq:spaceV_1_2} \\
& M_{i,h} = \left\{ \mu \in  M_i, \, \mu|_F \in M(F) \, \, \forall F \in \mathcal{F}_{h,i}\right\}, \, i=1,2,
 \label{eq:spaceM_i_h} \\ 
& M_{J,i,h} = \left\{ \mu \in M_{J,i}, \, \mu|_F \in M(F) \, \, \forall F \in \mathcal{F}_{h,\Gamma,i}\right\}, 
\, i=1,2, \label{eq:spaceM_J_i_h} \\
& M_{\lambda,h} = \left\{ \mu \in M_{\Lambda}, \, \mu|_F \in M(F) \, \, \forall F \in \mathcal{F}_{h,\Gamma}
\right\}. \label{eq:M_space_lambda_dmh_h}
\end{align}
\end{subequations}
Having defined the global finite element spaces on the partitioned triangulation, we can define
the global spaces on $\mathcal{T}_h$ as:
\begin{subequations}\label{eq:global_spaces_Tau_h}
\begin{align}
& \mathcal{V}_h:= \mathbf{V}_{1,h} \times V_{1,h} \times \mathbf{V}_{2,h} \times V_{2,h},
\label{eq:space_cal_V_h} \\
& \mathcal{Q}_h:= M_{1,h} \times M_{2,h} \times M_{J,1,h} \times M_{J,2,h} \times M_{\lambda,h}.
\label{eq:space_cal_Q_h}
\end{align}
\end{subequations}

\subsection{The DMH numerical method}\label{sec:DMH_FE_method}

The DMH-RT0 FEM approximation 
of problem~\eqref{eq:tx_model} can be written in abstract form as:
\begin{subequations}\label{eq:saddle_point_problem_dmh_h}
given $F_h=F({\tt v}_h)$ and $G_h=G({\tt q}_h)$, 
find ${\tt u}_h = (\mathbf{J}_{1,h}, u_{1,h}, \mathbf{J}_{2,h}, u_{2,h}) \in \mathcal{V}_h$ 
and ${\tt p}_h = (\widehat{u}_{1,h}, \widehat{u}_{2,h}, \mathcal{J}_{1,h}, \mathcal{J}_{2,h}, 
\lambda_h) \in \mathcal{Q}_h$ such that:
\begin{align}
& a({\tt u}_h, {\tt v}_h)  + b({\tt v}_h, {\tt p}_h) = F_h \qquad \forall {\tt v}_h 
= (\boldsymbol{\tau}_{1,h}, \phi_{1,h}, \boldsymbol{\tau}_{2,h}, 
\phi_{2,h}) \in \mathcal{V}_h, \\
& b({\tt u}_h, {\tt q}_h)  - c({\tt p}_h, {\tt q}_h) = G_h \qquad \forall {\tt q}_h 
= (\mu_{1,h}, \mu_{2,h}, \rho_{1,h}, \rho_{2,h}, \varphi_h) \in \mathcal{Q}_h,
\end{align}
where the bilinear forms $a(\cdot, \cdot)$, $b(\cdot, \cdot)$, $c(\cdot, \cdot)$
are defined in~\eqref{eq:bilinear_forms_dmh}, the linear functionals $F(\cdot)$, $G(\cdot)$
are defined in~\eqref{eq:linear_forms_dmh} and the spaces $\mathcal{V}_h$ and $\mathcal{Q}_h$ are defined in~\eqref{eq:global_spaces_Tau_h}. 
\end{subequations}

System~\eqref{eq:saddle_point_problem_dmh_h} is a special instance of  
the approximate generalized saddle-point problem~\eqref{eq:abstract_SP_problem_h}.
Since $\mathcal{V}_h \subset \mathcal{V}$ and $\mathcal{Q}_h \subset \mathcal{Q}$, the discrete 
continuity constants of the bilinear forms~\eqref{eq:bilinear_forms_dmh} are
$M_{a,h} = M_a$, $M_{b,h} = M_b$ and $M_{c,h} = M_c$. Similarly, the discrete 
coercivity constants are $k_{a,h}=k_a$ and $k_{b,h}=k_b$. Moreover, since 
${\rm div}\mathbf{V}(K) = V(K)$ for all $K \in \mathcal{T}_h$, we have 
\begin{align*}
& \mathcal{V}^0_h=\left\{ (\mathbf{q}_1, \mathbf{q}_2) \in 
(\mathbf{V}_{1,h} \times \mathbf{V}_{2,h}) \cap (\mathcal{H}({\rm div};\Omega_1) \times 
\mathcal{H}({\rm div};\Omega_2)), \right. \nonumber \\
& \left. \qquad {\rm with} \, \, {\rm div}\mathbf{q}_i = 0,  \, \, {\rm and} \, \, 
\mathbf{q}_i \cdot \mathbf{n}|_{\partial \Omega_i} = 0, \, i=1,2 \right\} \subset \mathcal{V}^0.
\end{align*}
The following lemma is a result of Theorem A.2 reported in~\ref{sec:saddle_point_approx}.
%~\ref{theo:existence_uniqueness_abstract_h}.
\begin{lemma}[Well posedness of~\eqref{eq:saddle_point_problem_dmh_h}]
\label{theorem:wellposedness_dmh_weak_h} 
The DMH-RT0 FEM approximation~\eqref{eq:saddle_point_problem_dmh_h} of problem~\eqref{eq:tx_model} has a unique solution.
\end{lemma}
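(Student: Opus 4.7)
The strategy is to apply the abstract approximation result Theorem~A.2 from~\ref{sec:saddle_point_approx} to the discrete problem~\eqref{eq:saddle_point_problem_dmh_h}. I would verify the discrete analogs of the four items already checked in the proof of Lemma~\ref{theorem:wellposedness_dmh_weak}: continuity of $a$, $b$, $c$ at the discrete level; coercivity of $a$ on the discrete kernel $\mathcal{V}_h^0$; a discrete inf-sup condition for $b$; and the smallness assumption on the product of the resulting constants.

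Three of these four items come essentially for free. Since $\mathcal{V}_h \subset \mathcal{V}$ and $\mathcal{Q}_h \subset \mathcal{Q}$, the continuity estimates~\eqref{eq:constants_dmh} hold verbatim with $M_{a,h}=M_a$, $M_{b,h}=M_b$, $M_{c,h}=M_c$. For the discrete kernel coercivity, I would exploit the commutativity property ${\rm div}\,\mathbf{V}(K)=V(K)$ enjoyed by the $RT_0$--$\mathbb{P}_0$ pair: this yields $\mathcal{V}^0_h \subset \mathcal{V}^0$, so the bound established on $\mathcal{V}^0$ in the continuous case restricts immediately with $k_{a,h}=k_a$ under assumption~\eqref{eq:v_bounds}. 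The smallness condition~\eqref{eq:smallness_constants} depends only on these constants and so holds verbatim under~\eqref{eq:kappa_bounds}.

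The one genuinely nontrivial step, and the one I expect to be the main obstacle, is the discrete inf-sup bound for $b$. My plan is to repeat the construction used in the continuous proof \emph{entirely inside} the finite element space. Given ${\tt P}_h=(\mu_{1,h},\mu_{2,h},0,0,0)\in\mathcal{Q}_h$, I would choose ${\tt U}_h=(\mathbf{J}_{1,h},0,\mathbf{J}_{2,h},0)\in\mathcal{V}_h$ where $\mathbf{J}_{i,h}|_K\in RT_0(K)$ arises as the flux component of the solution of the discrete analog of the local auxiliary system~\eqref{eq:aux_mixed_dmh}, posed on each tetrahedron in the pair $\mathbf{V}(K)\times V(K)$. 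Because the degrees of freedom of $RT_0$ are precisely the face fluxes~\eqref{eq:dofs_Vvect_K} and $\mu_{i,h}|_F\in\mathbb{P}_0(F)$, this local problem is well posed element by element. Selecting $\mathbf{q}_i=\mathbf{J}_{i,h}$ and $\phi_i={\rm div}\,\mathbf{J}_{i,h}$ in the localized discrete system reproduces the identity $b({\tt U}_h,{\tt P}_h)=\|{\tt U}_h\|_{\mathcal{V}}^2$, and the same elementwise trace inequality used in Lemma~\ref{theorem:wellposedness_dmh_weak} delivers the discrete inf-sup constant $k_{b,h}=(C^{\ast})^{-1}=k_b$.

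Once these four items are in place, Theorem~A.2 yields existence and uniqueness of the discrete solution. The only genuinely new ingredient beyond what was already proved for the continuous formulation is the verification that the auxiliary construction used in the inf-sup argument can be carried out without leaving the low-order $RT_0$--$\mathbb{P}_0$ space; this reduces to the classical local solvability of the mixed formulation on each tetrahedron, which is standard for the chosen finite element pair.
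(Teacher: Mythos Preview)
Your proposal is correct and follows essentially the same route as the paper. The paper is in fact more terse: in the paragraph immediately preceding the lemma it records that $M_{a,h}=M_a$, $M_{b,h}=M_b$, $M_{c,h}=M_c$, $k_{a,h}=k_a$, $k_{b,h}=k_b$ (using the subspace inclusions and the property ${\rm div}\,\mathbf{V}(K)=V(K)$ to obtain $\mathcal{V}^0_h\subset\mathcal{V}^0$), and then simply states that the lemma is a consequence of Theorem~A.2, without spelling out the discrete inf--sup construction in the detail you provide.
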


\begin{remark}\label{rem:equivalence_saddle_point_abstract_h}
In analogy with Remark~\ref{rem:equivalence_saddle_point_abstract}, 
having proved that~\eqref{eq:saddle_point_problem_dmh_h} is uniquely solvable, we see 
that~\eqref{eq:saddle_point_problem_dmh_h} can be written in the equivalent form: 
given $F_h$, find ${\tt u}_h \in \mathcal{V}^{\mathcal{H}}_h$ such that
\begin{subequations}\label{eq:saddle_equivalent_h}
\begin{align}
& a({\tt u}_h, {\tt v}_h) = F_h \qquad \forall 
{\tt v}_h \in \mathcal{V}^0_h, & \label{eq:saddle_point_equivalent_h}
\end{align}
where, for ${\tt p}_h$ given in $\mathcal{Q}_h$, we set
\begin{align}
& \mathcal{H}_h = \mathcal{H}({\tt q}_h):= G({\tt q}_h) + 
c({\tt p}_h,{\tt q}_h) \qquad \forall {\tt q}_h \in \mathcal{Q}_h. & \label{eq:new_functional_H_h}
\end{align}
\end{subequations}
\end{remark}

The equivalence results discussed in Remarks~\ref{rem:equivalence_saddle_point_abstract} and~\ref{rem:equivalence_saddle_point_abstract_h} allow us to apply Theorem 7.4.3 of~\cite{quarteroni1994numerical}, which proves the convergence of the solution of~\eqref{eq:saddle_point_problem_dmh_h} 
to the solution of~\eqref{eq:saddle_point_problem_dmh}. 
%is a consequence of 
%the equivalence results in Remarks~\ref{rem:equivalence_saddle_point_abstract} 
%and~\ref{rem:equivalence_saddle_point_abstract_h} that allow us to apply
%Theorem 7.4.3 of~\cite{quarteroni1994numerical}.
\begin{theorem}[Convergence of the Galerkin approximation]\label{theo:abstract_error_analysis_h}
Since all hypotheses of Theorem A.1 and Theorem A.2 %~\ref{theo:existence_uniqueness_abstract} and~\ref{theo:existence_uniqueness_abstract_h} 
are satisfied, the following optimal error estimates hold:
\begin{subequations}\label{eq:theorem_approx_h}
\begin{align}
& \| {\tt u} - {\tt u}_h \|_{\mathcal{V}} \leq E_{11,h} \inf_{{\tt v}_h \in \mathcal{V}_h} 
\| {\tt u} - {\tt v}_h \|_{\mathcal{V}}
+ E_{12,h} \inf_{{\tt q}_h \in \mathcal{Q}_h} \| {\tt p} - {\tt q}_h \|_{\mathcal{Q}} & \label{eq:err_estimate_u_h} \\
& \| {\tt p} - {\tt p}_h \|_{\mathcal{Q}} \leq E_{21,h} 
\inf_{{\tt v}_h \in \mathcal{V}_h} \| {\tt u} - {\tt v}_h \|_{\mathcal{V}}
+ E_{22,h} \inf_{{\tt q}_h \in \mathcal{Q}_h} \| {\tt p} - {\tt q}_h \|_{\mathcal{Q}}, & \label{eq:err_estimate_p_h}
\end{align}
\end{subequations}
where:
\begin{align*}
& E_{11,h} = \left( 1 + \Frac{M_a}{k_{a,h}} \right) \left( 1 + \Frac{M_b}{k_{b,h}} \right), 
\qquad E_{12,h} = \Frac{M_b}{k_{b,h}}, \\
& E_{21,h} = \Frac{M_a}{k_{b,h}} E_{11,h}, \qquad 
E_{22,h} =  \left( 1 + \Frac{M_b}{k_{b,h}} + \Frac{M_a \, M_b}{k_{a,h} \, k_{b,h}} \right). 
\end{align*}
\end{theorem}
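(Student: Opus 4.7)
The plan is to exploit the reduction to the standard Brezzi framework already set up in Remarks~\ref{rem:equivalence_saddle_point_abstract} and~\ref{rem:equivalence_saddle_point_abstract_h}, and then invoke the abstract Galerkin approximation theorem (Theorem 7.4.3 of~\cite{quarteroni1994numerical}) to obtain the stated estimates with the explicit constants $E_{ij,h}$. By Lemma~\ref{theorem:wellposedness_dmh_weak} and Lemma~\ref{theorem:wellposedness_dmh_weak_h} both the continuous problem~\eqref{eq:saddle_point_problem_dmh} and its Galerkin counterpart~\eqref{eq:saddle_point_problem_dmh_h} admit unique solutions, and by the two Remarks they can be recast in the reduced forms~\eqref{eq:saddle_equivalent} and~\eqref{eq:saddle_equivalent_h} as ordinary two-field mixed problems on the affine manifolds $\mathcal{V}^{\mathcal{H}}$ and $\mathcal{V}^{\mathcal{H}}_h$.

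The only real preparation before invoking the abstract theorem is to verify that all structural constants transfer unchanged from the continuous to the discrete level. Since $\mathcal{V}_h \subset \mathcal{V}$ and $\mathcal{Q}_h \subset \mathcal{Q}$, continuity of $a$, $b$, $c$ is automatically preserved with constants $M_{a,h}=M_a$, $M_{b,h}=M_b$, $M_{c,h}=M_c$, as already observed just after~\eqref{eq:saddle_point_problem_dmh_h}. Kernel coercivity of $a$ is inherited for free because $\mathcal{V}^0_h \subset \mathcal{V}^0$, so the argument of Lemma~\ref{theorem:wellposedness_dmh_weak} gives $k_{a,h}=k_a = \min\{c_0,c_1\}$. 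The discrete inf-sup for $b$ with constant $k_{b,h}=k_b=(C^\ast)^{-1}$ is the delicate ingredient: its continuous counterpart was obtained by solving the local auxiliary mixed problem~\eqref{eq:aux_mixed_dmh}; since ${\rm div}\,\mathbf{V}(K) = V(K)$ and the space $M(F)$ carries exactly the normal trace of $RT_0$ on each face, the very same construction can be replayed inside the discrete spaces, giving identity~\eqref{eq:calcolo_b_dmh} for discrete ${\tt U}_h$ and, through the elementwise trace inequality, the same constant $(C^\ast)^{-1}$.

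With these ingredients in hand I would then perform the customary error split: for arbitrary ${\tt v}_h \in \mathcal{V}_h$ and ${\tt q}_h \in \mathcal{Q}_h$, write
\[
{\tt u}-{\tt u}_h = ({\tt u}-{\tt v}_h)+({\tt v}_h-{\tt u}_h),
\qquad
{\tt p}-{\tt p}_h = ({\tt p}-{\tt q}_h)+({\tt q}_h-{\tt p}_h),
\]
apply Galerkin orthogonality of the residual equations, bound the discrete differences ${\tt v}_h-{\tt u}_h$ and ${\tt q}_h-{\tt p}_h$ using discrete kernel coercivity and the discrete inf-sup condition, and finally absorb the continuity factors. Taking the infimum over ${\tt v}_h$ and ${\tt q}_h$, the constants assemble exactly into $E_{11,h}=(1+M_a/k_{a,h})(1+M_b/k_{b,h})$, $E_{12,h}=M_b/k_{b,h}$, $E_{21,h}=(M_a/k_{b,h})\,E_{11,h}$ and $E_{22,h}=1+M_b/k_{b,h}+M_a M_b/(k_{a,h}k_{b,h})$ as stated in~\eqref{eq:theorem_approx_h}.

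I do not anticipate a genuine obstacle: the argument is essentially bookkeeping around a classical abstract theorem. The one point that genuinely requires checking rather than quoting is the reproduction of the continuous inf-sup argument at the discrete level with the \emph{same} constant; this relies on the explicit RT0 structure~\eqref{eq:Vvect_K}--\eqref{eq:M_F} and on the matching of normal-trace degrees of freedom on the partition faces, and is what guarantees that the bounds for $E_{ij,h}$ can be stated in terms of the continuous constants $M_a$, $M_b$, $k_a$, $k_b$ rather than their (potentially looser) discrete analogues.
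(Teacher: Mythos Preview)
Your proposal is correct and follows essentially the same route as the paper: the paper does not give a detailed proof but simply invokes the equivalence of Remarks~\ref{rem:equivalence_saddle_point_abstract} and~\ref{rem:equivalence_saddle_point_abstract_h} to reduce to the standard two-field Brezzi setting and then applies Theorem~7.4.3 of~\cite{quarteroni1994numerical}, after having already recorded (just before Lemma~\ref{theorem:wellposedness_dmh_weak_h}) that $M_{a,h}=M_a$, $M_{b,h}=M_b$, $M_{c,h}=M_c$, $k_{a,h}=k_a$, $k_{b,h}=k_b$. Your write-up is in fact more explicit than the paper's, particularly in flagging that the discrete inf-sup constant inherits the continuous value via the local auxiliary problem and the $RT_0$ normal-trace property, a point the paper asserts without elaboration.
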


Using in~\eqref{eq:theorem_approx_h} the approximation theory for hybrid methods developed 
in~\cite{RobertsThomas1991} yields the following convergence estimates for the DMH-RT0 FEM.
\begin{theorem}[Convergence of the DMH-RT0 FEM]\label{theo:convergence_estimate_dmh}
There exist positive constants $C_{\tt u}$ and $C_{\tt p}$, independent of $h$, such that:
\begin{subequations}\label{eq:theorem_approx_h_dmh}
\begin{align}
& \| {\tt u} - {\tt u}_h \|_{\mathcal{V}} \leq C_{\tt u} \, h, & \label{eq:err_estimate_u_h_dmh} \\
& \| {\tt p} - {\tt p}_h \|_{\mathcal{Q}} \leq C_{\tt p} \, h. & \label{eq:err_estimate_p_h_dmh}
\end{align}
\end{subequations}
\end{theorem}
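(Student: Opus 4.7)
The plan is to combine the quasi-optimality estimate of Theorem~\ref{theo:abstract_error_analysis_h} with standard interpolation/projection bounds on each factor space of $\mathcal{V}_h$ and $\mathcal{Q}_h$, following the approximation-theoretic machinery for mixed-hybrid methods developed in~\cite{RobertsThomas1991}. Because the constants $E_{11,h},E_{12,h},E_{21,h},E_{22,h}$ are independent of $h$ (they depend only on the $h$-uniform continuity and inf--sup constants established in Lemma~\ref{theorem:wellposedness_dmh_weak_h}), it suffices to show that each infimum in~\eqref{eq:theorem_approx_h} is of order $h$ under the natural regularity assumption that the exact solution components $u_i$, $\mathbf{J}_i$, together with their interfacial and boundary traces, belong to the appropriate $H^1$ spaces piecewise on $\mathcal{T}_{h,i}$ and $\mathcal{F}_{h,i}$.

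First I would bound the $\mathcal{V}$-norm infimum. On each element $K$, I take as test function the pair formed by the canonical $RT_0$ interpolant $\Pi_K^{RT} \mathbf{J}_i$ and the $L^2$-projection $\Pi_K^0 u_i$ onto $V(K)$. Standard scaling arguments give
\begin{equation*}
\| \mathbf{J}_i - \Pi_K^{RT} \mathbf{J}_i \|_{H(\mathrm{div};K)}
\leq C h_K\, \bigl( |\mathbf{J}_i|_{1,K} + |\mathrm{div}\,\mathbf{J}_i|_{1,K} \bigr),
\qquad
\| u_i - \Pi_K^0 u_i \|_{0,K} \leq C h_K |u_i|_{1,K},
\end{equation*}
and summation over $K\in\mathcal{T}_{h,i}$, together with the definition~\eqref{eq:norm_V_dmh} of $\|\cdot\|_{\mathcal{V}}$, yields the desired $O(h)$ bound with a constant depending on the global $H^1$-type seminorms of the exact $\mathbf{J}_i$, $\mathrm{div}\,\mathbf{J}_i$ and $u_i$.

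Next I would bound the $\mathcal{Q}$-norm infimum. For each multiplier component $\widehat{u}_i$, $\mathcal{J}_i$, $\lambda$, I pick the face-wise $L^2$-projection onto $M(F)$. On a face $F$ of diameter $h_F\leq h$, the standard approximation estimate for $\mathbb{P}_0(F)$ gives $\|\eta-\Pi^0_F\eta\|_{0,F}\le C h_F |\eta|_{1,F}$. Combining this with the trace inequality to lift the face-wise $H^1$-seminorm to an element-wise one, i.e.\ $|\eta|_{1,F}\le C h_K^{-1/2}\|u_i\|_{1,K}$ when $\eta=u_i|_F$ with $F\subset\partial K$, and then summing the resulting contributions over $\mathcal{F}_{h,1}$, $\mathcal{F}_{h,2}$, $\mathcal{F}_{h,\Gamma,1}$, $\mathcal{F}_{h,\Gamma,2}$ and $\mathcal{F}_{h,\Gamma}$ as in definition~\eqref{eq:norm_Q_dmh}, I obtain an $O(h)$ bound on $\inf_{{\tt q}_h\in\mathcal{Q}_h}\|{\tt p}-{\tt q}_h\|_{\mathcal{Q}}$ with a constant depending on the regularity of the traces $u_i|_\Gamma$, $\mathbf{J}_i\cdot\mathbf{n}|_\Gamma$ and on the regularity of the Lagrange multiplier $\lambda$.

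Plugging both bounds into~\eqref{eq:err_estimate_u_h} and~\eqref{eq:err_estimate_p_h} and setting
$C_{\tt u}:= E_{11,h}\,C_1 + E_{12,h}\,C_2$ and $C_{\tt p}:= E_{21,h}\,C_1 + E_{22,h}\,C_2$, with $C_1,C_2$ the constants from the two previous steps, yields~\eqref{eq:err_estimate_u_h_dmh}--\eqref{eq:err_estimate_p_h_dmh}. The main obstacle I foresee is the proper handling of the face-based norm~\eqref{eq:norm_Q_dmh}: since it sums $\|\cdot\|_{0,F}^2$ over all faces without any weighting in $h_F$, one must be careful that the trace-plus-approximation argument produces genuinely $O(h)$ behaviour after summation and does not leave behind a residual $h^{-1}$ factor. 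This is the step where the shape-regularity of $\{\mathcal{T}_h\}_{h>0}$ (so that $h_F\simeq h_K$ for every $F\subset\partial K$) and the uniformity of the trace constant $C^{\ast}$ used already in Lemma~\ref{theorem:wellposedness_dmh_weak} are essential.
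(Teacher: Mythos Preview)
Your proposal is correct and follows essentially the same approach as the paper: the paper's own ``proof'' is a single sentence stating that one should insert the approximation theory for hybrid methods from~\cite{RobertsThomas1991} into the quasi-optimality bounds~\eqref{eq:theorem_approx_h}, which is precisely what you spell out. Your final remark about the unweighted face-based $L^2$ norm in~\eqref{eq:norm_Q_dmh} and the resulting $h$-scaling of the trace constant $C^\ast$ is a genuine technical point that the paper does not address explicitly; it is a legitimate concern, but it goes beyond the level of detail the paper itself provides.
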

Moreover, using the techniques of~\cite{ArnoldBrezzi1985} and~\cite[Section~21]{RobertsThomas1991}, we can prove
the following post-processing error estimates.
\begin{theorem}[Convergence of post-processed quantities]\label{theo:convergence_estimate_post_dmh}
There exist positive constants $\widetilde{C}_1$ and $\widetilde{C}_2$, independent of $h$, such that:
\begin{subequations}\label{eq:theorem_approx_h_dmh_post}
\begin{align}
& \| P_0 u - u_h \|_{0,\Omega} \leq \widetilde{C}_1 \, h^2, & \label{eq:err_estimate_u_h_dmh_P_0u} \\
& \| u - u_h^\ast \|_{0,\Omega} \leq \widetilde{C}_2 \, h^2, & \label{eq:err_estimate_u_h_dmh_star}
\end{align}
where $P_0 u$ is the $L^2$ projection of $u$ on $V_{i,h}$ and $u_h^\ast$ is the 
piecewise linear nonconforming interpolant of $\widehat{u}_h$ over $\mathcal{T}_h$~\cite{CR1973}.
\end{subequations}
\end{theorem}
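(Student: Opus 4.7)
The plan is to follow the classical super-convergence arguments of Arnold--Brezzi and Roberts--Thomas, suitably adapted to our three-field formulation with the additional Lagrange multipliers $(\mathcal{J}_i, \lambda)$ on the interface $\Gamma$. Both estimates rest on the fact that, for RT0 on simplices, the natural degrees of freedom of the primal unknown $u$ and of its face-trace $\widehat u$ lie, respectively, at the barycenters of the tetrahedra and of the triangular faces. This gives quadrature accuracy of order $h^2$ when testing against piecewise constants, which is one order better than the energy estimate \eqref{eq:err_estimate_u_h_dmh}. The strategy is to exploit this via an Aubin--Nitsche duality argument on a subdomain-wise auxiliary problem.

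For \eqref{eq:err_estimate_u_h_dmh_P_0u}, first I would introduce, on each subdomain $\Omega_i$, the auxiliary adjoint problem $-\operatorname{div}(\mu\nabla\psi_i) - \mathbf v\cdot\nabla\psi_i + r\psi_i = P_0 u - u_h$ in $\Omega_i$ with homogeneous transmission conditions on $\Gamma$ and a Robin condition on $\Sigma_i$ compatible with \eqref{eq:bcs}. Assuming $H^2$-regularity on each $\Omega_i$, one has $\|\psi_i\|_{2,\Omega_i}\leq C\|P_0 u - u_h\|_{0,\Omega}$. Testing the error equations of \eqref{eq:saddle_point_problem_dmh_h} against the RT0-interpolant of the dual flux $\boldsymbol\chi_i := \mu\nabla\psi_i - \mathbf v\psi_i$ and against $P_0\psi_i$, and using the commuting diagram property $\operatorname{div}\Pi_{RT0}\boldsymbol\chi = P_0\operatorname{div}\boldsymbol\chi$, one gets $\|P_0 u - u_h\|_{0,\Omega}^2 = (P_0 u - u_h, \operatorname{div}\boldsymbol\chi + r\psi)$ up to boundary and interface terms. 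The key cancellation is that $(P_0 u - u_h, \operatorname{div}(\boldsymbol\chi - \Pi_{RT0}\boldsymbol\chi)) = 0$ since $P_0 u - u_h \in V_h$ (piecewise constants) and $\operatorname{div}(\boldsymbol\chi - \Pi_{RT0}\boldsymbol\chi)$ has zero mean on each $K$. Combining with $\|\mathbf J - \mathbf J_h\|_{0,\Omega}\leq Ch$ from Theorem~\ref{theo:convergence_estimate_dmh} and the $O(h)$ approximation of $\boldsymbol\chi - \Pi_{RT0}\boldsymbol\chi$ in $L^2$ yields the $h^2$ rate.

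For \eqref{eq:err_estimate_u_h_dmh_star}, I would use the characterization, originally due to Arnold--Brezzi \cite{ArnoldBrezzi1985}, by which the hybrid multiplier $\widehat u_h$ restricted to each interior face $F$ coincides with the nodal value at the barycenter of $F$ of the Crouzeix--Raviart nonconforming interpolant $u_h^\ast$. Thus the analysis reduces to proving that $\widehat u_h(\text{barycenter of }F)$ super-approximates $u$ at the same point. Writing $u_h^\ast - u = (u_h^\ast - u_h^{CR}) + (u_h^{CR} - u)$, where $u_h^{CR}$ is the CR-interpolant of the exact solution $u$, the second term satisfies the standard $O(h^2)$ CR-estimate in $L^2$. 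For the first, I would show that $u_h^\ast - u_h^{CR}$ is the CR-interpolant of the face-wise quantity $\widehat u_h - u|_F$ and use an inverse/scaling argument together with the super-convergence $\|\widehat u - \widehat u_h\|_{0,\mathcal F_h} \leq C h^{3/2}$ that follows by the same duality technique used in the first part, keeping in mind the compatibility between $\widehat u_i$ and $\lambda$ enforced by the bilinear form $c(\cdot,\cdot)$ through the term $(\rho_i,\lambda - \widehat u_i)$ (resp.\ with the factor $\kappa$).

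The main obstacle I anticipate is not the first estimate, which is essentially the classical Douglas--Roberts duality for mixed methods, but rather tracking the interface and boundary contributions in the duality identity so that neither the jump of $\lambda$-related multipliers nor the segregation factor $\kappa$ spoil the cancellation. In particular, because on $\Gamma$ the post-processed value $u_h^\ast$ is double-valued (one value from $\widehat u_{1,h}$, another from $\widehat u_{2,h} = \kappa\,\lambda_h$), the $L^2(\Omega)$ norm must be split as a sum over $\Omega_1$ and $\Omega_2$, and the transmission condition \eqref{eq:tx_segr} must be reproduced discretely with accuracy $O(h^2)$; this is exactly what the last block of bilinear terms in \eqref{eq:bilinear_C_dmh} ensures, so the constants $\widetilde C_1, \widetilde C_2$ will depend on $\kappa$ and on the smallness condition \eqref{eq:kappa_bounds} already used in Lemma~\ref{theorem:wellposedness_dmh_weak}.
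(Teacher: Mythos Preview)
Your proposal is essentially correct and follows precisely the route the paper indicates: the paper does not give a detailed proof of this theorem but simply states that it follows ``using the techniques of~\cite{ArnoldBrezzi1985} and~\cite[Section~21]{RobertsThomas1991}''. Your outline is a faithful expansion of exactly those techniques (Aubin--Nitsche duality with the commuting diagram property for~\eqref{eq:err_estimate_u_h_dmh_P_0u}, and the Arnold--Brezzi identification of $\widehat u_h$ with the Crouzeix--Raviart degrees of freedom for~\eqref{eq:err_estimate_u_h_dmh_star}), together with the natural observation that the interface multipliers in $c(\cdot,\cdot)$ must be carried along in the duality identity; in this sense you are supplying more detail than the paper itself does.
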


\begin{remark}
The error estimates~\eqref{eq:theorem_approx_h_dmh_post} are superconvergence results for the DMH-RT0
FEM. In particular, error estimate~\eqref{eq:err_estimate_u_h_dmh_P_0u} tells us that
$u_h$ is a very good approximation of $u$ at the barycenter of each element $K \in \mathcal{T}_h$,
whereas error estimate~\eqref{eq:err_estimate_u_h_dmh_star} tells us that the piecewise linear 
nonconforming interpolant
of $\widehat{u}_h$ over the mesh approximates the exact solution $u$ with the same accuracy 
as that of the piecewise linear solution computed by the standard finite element method applied 
to problem~\eqref{eq:tx_model}.
\end{remark}

\section{Efficient implementation of the DMH method}\label{sec:static_condensation}

In this section, we illustrate how to implement 
the DMH-RT0 FEM scheme \eqref{eq:saddle_point_problem_dmh_h} in a computationally efficient manner. 
To this end, 
we first discuss the properties of
the linear algebraic system and then describe in detail the static condensation procedure
that allows us to eliminate the internal variables $u_i, \mathbf{J}_i$ and 
the Lagrange multipliers $\mathcal{J}_i$ in favor of $\widehat{u}_i$ and $\lambda$, $i=1,2$.

\subsection{System reduction through static condensation}\label{sec:linear_system_DMH_FE_method}

Functions belonging to the finite dimensional space $\mathcal{V}_h$ are completely
discontinuous over $\mathcal{T}_h$. Similarly, functions belonging to
the finite dimensional space $\mathcal{Q}_h$ are completely discontinuous over $\mathcal{F}_h$.
These properties can be profitably exploited to implement the DMH-RT0 FEM scheme in a very efficient manner
through the use of static condensation. This procedure is basically 
a Gauss elimination algorithm that allows us to express all the variables of the numerical
method as a function of a sole unknown, thereby reducing considerably the size of the linear algebraic
system and enhancing the computational efficiency of the method. 
Static condensation, however, is not a feature specific of the DMH-RT0 FEM scheme
proposed in the present article, but is widely adopted in finite element formulations.
We refer to~\cite{ArnoldBrezzi1985,brezzifortin1991} for an introduction 
to static condensation in mixed and hybrid finite element methods, to~\cite{CockburnGopalakrishnanLazarov2009,Yakovlev2016} for 
an advanced use of static condensation in the context of Continuous and 
Hybridizable Discontinuous Galerkin methods and to~\cite{quarteroni1999domain} for a description of the use of 
static condensation as an algorithm to implement the method of Schur complement system.
To apply static condensation to the DMH FEM it is convenient to write  
the linear algebraic system associated with problem~\eqref{eq:saddle_point_problem_dmh_h} 
in full block form, which reads:
\begin{equation}\label{eq:dmh_FE_linear_system}
%\begin{eqnarray}
\left[
\begin{array}{lllllllll}
{\tt A}_1 & {\tt N}_1 & 0 & 0 & {\tt D}_1^T & 0 & 0 & 0 & 0 \\
{\tt P}_1 & {\tt R}_1 & 0 & 0 & 0 & 0 & 0 & 0 & 0 \\
0         &  0        & {\tt A}_2 & {\tt N}_2 & 0 & {\tt D}_2^T & 0 & 0 & 0 \\
0         &  0        & {\tt P}_2 & {\tt R}_2 & 0 & 0 & 0 & 0 & 0 \\
{\tt D}_1 & 0         & 0 & 0      & -{\tt M}_{\Sigma_1} & 0 & -{\tt E}_1^T & 0 & 0 \\
0         & 0         & {\tt D}_2 & 0   & 0 & -{\tt M}_{\Sigma_2} & 0 & -{\tt E}_2^T & 0  \\
0 & 0 & 0 & 0         & {\tt E}_1 & 0 & 0 & 0 & -{\tt U}_1^T \\
0 & 0 & 0 & 0         & 0       & {\tt E}_2 & 0 & 0 & -\kappa {\tt U}_2^T \\
0 & 0 & 0 & 0         & 0  & 0              & {\tt U}_1 & {\tt U}_2 & 0
\end{array}
\right]  
\left[
\begin{array}{l}
\mathbf{J}_1 \\
\mathbf{u}_1 \\
\mathbf{J}_2 \\
\mathbf{u}_2 \\
\widehat{\mathbf{u}}_1 \\
\widehat{\mathbf{u}}_2 \\
\mathbf{j}_1 \\
\mathbf{j}_2 \\
\boldsymbol{\lambda}
\end{array}
\right] 
= 
\left[
\begin{array}{l}
\mathbf{0} \\
\mathbf{b}_1 \\
\mathbf{0} \\
\mathbf{b}_2 \\
\mathbf{b}_{\Sigma_1} \\
\mathbf{b}_{\Sigma_2} \\
\mathbf{0} \\
\mathbf{0} \\
\mathbf{b}_{\sigma}
\end{array}
\right]. 
%\end{eqnarray}
\end{equation}
In the equation system~\eqref{eq:dmh_FE_linear_system}, $\mathbf{J}_i$, $\mathbf{u}_i$, $i=1,2$,
denote the vectors of the degrees of freedom for the internal variables $\mathbf{J}_h$ and $u_h$
inside the partitioned triangulations $\mathcal{T}_{h,i}$, $i=1,2$. 
In particular, denoting by ${\tt NE}_1$ the number of tetrahedra in
$\mathcal{T}_{h,1}$ and by ${\tt NE}_2$ the number of tetrahedra in
$\mathcal{T}_{h,2}$, we notice that $\mathbf{J}_1$ is subdivided into a collection of 
${\tt NE}_1$ vectors of size equal to 4 and $\mathbf{u}_1$ has size equal to ${\tt NE}_1$; 
analogously,  $\mathbf{J}_2$ is subdivided into a collection of 
${\tt NE}_2$ vectors of size equal to 4 and $\mathbf{u}_2$ has size equal to ${\tt NE}_2$.
In the same spirit, matrix ${\tt A}_1$ has a block diagonal structure of size ${\tt NE}_1$,
where each block is the $4 \times 4$ flux matrix corresponding to an element of $\mathcal{T}_{h,1}$,
whereas matrix ${\tt A}_2$ has a block diagonal structure of size ${\tt NE}_2$,
where each block is the $4 \times 4$ flux matrix corresponding to an element of $\mathcal{T}_{h,2}$.
Similar considerations apply to the rectangular block matrices ${\tt P}_i$ and 
${\tt N}_i:= {\tt H}_i - {\tt P}_i^T$, $i=1,2$, and to the block matrices ${\tt R}_i$, $i=1,2$,
that have a diagonal structure, each entry corresponding to an element of $\mathcal{T}_{h,1}$
and $\mathcal{T}_{h,2}$, respectively.
The unknown vectors $\widehat{\mathbf{u}}_i$, instead, contain the degrees of freedom of the hybrid 
variables $\widehat{u}_{h,i}$, $i=1,2$, associated with each face of $\mathcal{F}_{h,i}$, $i=1,2$,
and for this reason the size of $\widehat{\mathbf{u}}_1$ is equal to ${\tt NF}_1$ and the size of
$\widehat{\mathbf{u}}_2$ is equal to ${\tt NF}_2$, where ${\tt NF}_1$ and ${\tt NF}_2$ denote
the number of faces in $\mathcal{F}_{h,1}$ and $\mathcal{F}_{h,2}$, the faces belonging to the interface
$\Gamma$ being counted twice. The unknown vectors $\mathbf{j}_i$, $i=1,2$, contain the degrees of freedom of 
the flux Lagrange multipliers $\mathcal{J}_{h,i}$, $i=1,2$, associated with each face of 
$\mathcal{F}_{h,\Gamma,1}$ and $\mathcal{F}_{h,\Gamma,2}$, respectively, and therefore 
their sizes are both equal to
${\tt NF}_\Gamma$, where ${\tt NF}_\Gamma$ denotes
the number of faces in $\mathcal{F}_{h,\Gamma}$. 
Finally, the unknown vector $\boldsymbol{\lambda}$ contains the degrees of freedom of 
the segregation condition Lagrange multiplier $\lambda_{h}$ associated with each face of 
$\mathcal{F}_{h,\Gamma}$, and therefore has size equal to ${\tt NF}_\Gamma$.
The matrices ${\tt D}_i$, $i=1,2$, enforce  the continuity of $\mathbf{J}_{h,i} \cdot \mathbf{n}_i$ 
across interelement boundaries in each triangulation $\mathcal{T}_{h,i}$.
The matrices ${\tt M}_{\Sigma_i}$, $i=1,2$, enforce the continuity of the Robin boundary 
boundary conditions~\eqref{eq:bcs} on each face of $\Sigma_i$.
The matrices ${\tt E}_i$, $i=1,2$, enforce the identity between 
$\mathbf{J}_{h,i} \cdot \mathbf{n}_i$ and the Lagrange multiplier $\mathcal{J}_{h,i}$
across each face belonging to $\mathcal{F}_{h,\Gamma,i}$, $i=1,2$.
%
%The matrices ${\tt D}_i$, ${\tt M}_{\Sigma_i}$ and ${\tt E}_i$, $i=1,2$, take 
%care of enforcing the continuity of $\mathbf{J}_{h,i} \cdot \mathbf{n}_i$ 
%across interelement boundaries in each triangulation $\mathcal{T}_{h,i}$, of the Robin boundary 
%boundary conditions~\eqref{eq:bcs} on each face of $\Sigma_i$ and the identity between 
%$\mathbf{J}_{h,i} \cdot \mathbf{n}_i$ and the Lagrange multiplier $\mathcal{J}_{h,i}$
%across each face belonging to $\mathcal{F}_{h,\Gamma,i}$, $i=1,2$, respectively.
The matrices ${\tt U}_1$ and ${\tt U}_2$ enforce the transmission 
condition~\eqref{eq:tx_flux} across each face of $\mathcal{F}_{h,\Gamma}$ whereas the matrices
$-{\tt U}_1^T$ and $-\kappa {\tt U}_2^T$ enforce the segregation condition~\eqref{eq:tx_segr}
across each face of $\mathcal{F}_{h,\Gamma}$. 
In analogy to what happens for the matrices 
associated with the internal degrees of freedom in each partitioned triangulation, 
also the matrices ${\tt D}_i$, ${\tt M}_{\Sigma_i}$, ${\tt E}_i$ and ${\tt U}_i$ have a block
structure, each block corresponding to a face of $\mathcal{F}_{h,i}$, $i=1,2$.
To conclude, the right-hand side vectors $\mathbf{b}_i$, $\mathbf{b}_{\Sigma_i}$ and
$\mathbf{b}_{\sigma}$, contain the contributions due to the source term $g$ 
in~\eqref{eq:continuity}, of the boundary terms $\beta_i$ in~\eqref{eq:bcs} and 
of the interface flux term $-\sigma$ in~\eqref{eq:tx_flux}, respectively.
%\gio{The next sections illustrate the steps of the elimination procedure, of system 
%matrix and load vector assembly, of system solution and of variable post-processing. (This list does not match with the subdivision of the sections that follow... need help to fix it)}

\subsubsection{Elimination of the internal variables $\mathbf{J}_h$ and $u_h$}
The first and second equations in the block linear 
system~\eqref{eq:dmh_FE_linear_system} read:
\begin{subequations}\label{eq:dmh_FE_domain_Omega_1}
\begin{align}
& {\tt A}_1 \mathbf{J}_1 + {\tt N}_1 \mathbf{u}_1 + {\tt D}_1^T \widehat{\mathbf{u}}_1 = \mathbf{0} 
& \label{eq:dmh_omega_1_eq1} \\
& {\tt P}_1 \mathbf{J}_1 + {\tt R}_1 \mathbf{u}_1 = \mathbf{g}_1, 
& \label{eq:dmh_omega_1_eq2}
\end{align}
\end{subequations}
whereas the third and fourth equations in the block linear 
system~\eqref{eq:dmh_FE_linear_system} read:
\begin{subequations}\label{eq:dmh_FE_domain_Omega_2}
\begin{align}
& {\tt A}_2 \mathbf{J}_2 + {\tt N}_2 \mathbf{u}_2 + {\tt D}_2^T \widehat{\mathbf{u}}_2 = \mathbf{0} 
& \label{eq:dmh_omega_2_eq1} \\
& {\tt P}_2 \mathbf{J}_2 + {\tt R}_2 \mathbf{u}_2 = \mathbf{g}_2.
& \label{eq:dmh_omega_2_eq2}
\end{align}
\end{subequations}
The two systems~\eqref{eq:dmh_FE_domain_Omega_1} and~\eqref{eq:dmh_FE_domain_Omega_2}
have a \emph{local} nature, that is, the unknown vector pairs $(\mathbf{J}_i, \mathbf{u}_i)$, $i=1,2$,
are associated with each tetrahedron $K$ belonging to $\mathcal{T}_{h,1}$ and $\mathcal{T}_{h,2}$, 
respectively. In particular, we see that the 
$4 \times 4$ flux matrices ${\tt A}_i$, $i=1,2$, are symmetric and positive definite, 
so that~\eqref{eq:dmh_omega_1_eq1} and~\eqref{eq:dmh_omega_2_eq1} can be solved to obtain:
\begin{subequations}\label{eq:equations_for_J}
\begin{align}
& \mathbf{J}_1 = - {\tt A}_1^{-1} \left[{\tt N}_1 \mathbf{u}_1 + {\tt D}_1^T \widehat{\mathbf{u}}_1\right],
& \label{eq:dmh_FE_equation_for_J1} \\
& \mathbf{J}_2 = - {\tt A}_2^{-1} \left[{\tt N}_2 \mathbf{u}_2 + {\tt D}_2^T \widehat{\mathbf{u}}_2\right].
& \label{eq:dmh_FE_equation_for_J2}
\end{align}
\end{subequations}
Then, we can substitute the above expressions in~\eqref{eq:dmh_omega_1_eq2} and~\eqref{eq:dmh_omega_2_eq2}
to get:
\begin{subequations}\label{eq:equations_for_u}
\begin{align}
& - {\tt P}_1 {\tt A}_1^{-1} \left[{\tt N}_1 \mathbf{u}_1 + {\tt D}_1^T \widehat{\mathbf{u}}_1\right] 
+ {\tt R}_1 \mathbf{u}_1 = \mathbf{g}_1, & \label{eq:dmh_omega_1_eq2_bis} \\
& - {\tt P}_2 {\tt A}_2^{-1} \left[{\tt N}_2 \mathbf{u}_2 + {\tt D}_2^T \widehat{\mathbf{u}}_2\right] 
+ {\tt R}_2 \mathbf{u}_2 = \mathbf{g}_2. & \label{eq:dmh_omega_2_eq2_bis}
\end{align}
\end{subequations}
Letting 
$$
{\tt M}_i:= {\tt R}_i - {\tt P}_i {\tt A}_i^{-1} {\tt N}_i = 
{\tt R}_i + {\tt P}_i {\tt A}_i^{-1} {\tt P}_i^T - {\tt P}_i {\tt A}_i^{-1} {\tt H}_i, \qquad 
i=1,2, 
$$
equations~\eqref{eq:equations_for_u} become:
\begin{subequations}\label{eq:equations_for_u_bis}
\begin{align}
& {\tt M}_1 \mathbf{u}_1 - {\tt P}_1 {\tt A}_1^{-1} {\tt D}_1^T \widehat{\mathbf{u}}_1 = \mathbf{g}_1,& \label{eq:dmh_omega_1_eq2_bis_bis} \\
& {\tt M}_2 \mathbf{u}_2 - {\tt P}_2 {\tt A}_2^{-1} {\tt D}_2^T \widehat{\mathbf{u}}_2 = \mathbf{g}_2. & \label{eq:dmh_omega_2_eq2_bis_bis}
\end{align}
\end{subequations}
Matrices ${\tt M}_i$ have size $1 \times 1$ and are invertible because of assumption~\eqref{eq:weak_coercivity_a}.
Thus, equations~\eqref{eq:equations_for_u_bis} can be solved to obtain:
\begin{subequations}\label{eq:equations_for_U}
\begin{align}
& \mathbf{u}_1 = {\tt M}_1^{-1} \left[{\tt P}_1 {\tt A}_1^{-1} {\tt D}_1^T \widehat{\mathbf{u}}_1 
+ \mathbf{g}_1\right], & \label{eq:dmh_FE_equation_for_u1} \\
& \mathbf{u}_2 = {\tt M}_2^{-1} \left[{\tt P}_2 {\tt A}_2^{-1} {\tt D}_2^T \widehat{\mathbf{u}}_2
+ \mathbf{g}_2\right]. & \label{eq:dmh_FE_equation_for_u2}
\end{align}
\end{subequations}
We can plug expressions~\eqref{eq:equations_for_U} back into~\eqref{eq:equations_for_J}
to obtain the following affine equations for the degrees of freedom of the dual variable
associated with each element $K \in \mathcal{T}_{h,i}$, $i=1,2$:
\begin{subequations}\label{eq:affine_equations_for_J}
\begin{align}
& \mathbf{J}_1 = {\tt L}_1 \widehat{\mathbf{u}}_1 + \mathbf{b}_1, & \label{eq:affine_equation_for_J1} \\
& \mathbf{J}_2 = {\tt L}_2 \widehat{\mathbf{u}}_2 + \mathbf{b}_2, & \label{eq:affine_equation_for_J2}
\end{align}
where:
\begin{align}
& {\tt L}_1:= - {\tt A}_1^{-1} \left[{\tt N}_1 {\tt M}_1^{-1} {\tt P}_1 {\tt A}_1^{-1}{\tt D}_1^T 
+ {\tt D}_1^T \right], & \label{eq:matrix_L1} \\
& \mathbf{b}_1:= - {\tt A}_1^{-1} {\tt N}_1 {\tt M}_1^{-1} \mathbf{g}_1, & \label{eq:matrix_b1} \\
& {\tt L}_2:= - {\tt A}_2^{-1} \left[{\tt N}_2 {\tt M}_2^{-1} {\tt P}_2 {\tt A}_2^{-1}{\tt D}_2^T 
+ {\tt D}_2^T \right], & \label{eq:matrix_L2} \\
& \mathbf{b}_2:= - {\tt A}_2^{-1} {\tt N}_2 {\tt M}_2^{-1} \mathbf{g}_2. & \label{eq:matrix_b2}
\end{align}
\end{subequations}

\subsubsection{Elimination of the interface Lagrange multipliers $\mathcal{J}_{i,h}$, $i=1,2$}
Restricting the fifth equation in the block linear system~\eqref{eq:dmh_FE_linear_system} 
to the faces belonging to $\mathcal{F}_{h,\Gamma,1}$ yields
\begin{subequations}\label{eq:dmh_FE_jcal}
\begin{align}
& {\tt D}_1 \mathbf{J}_1 - {\tt E}_1^T \mathbf{j}_1 = \mathbf{0}, & \label{eq:dmh_FE_jcal_1} 
\end{align}
whereas the restriction of the sixth equation in the block linear system~\eqref{eq:dmh_FE_linear_system} 
to the faces that belong to $\mathcal{F}_{h,\Gamma,2}$ yields
\begin{align}
& {\tt D}_2 \mathbf{J}_2 - {\tt E}_2^T \mathbf{j}_2 = \mathbf{0}. & \label{eq:dmh_FE_jcal_2} 
\end{align}
Since test functions $\mu_h$ and approximate multipliers $\mathcal{J}_h$ belong to the same 
discrete space $M(F)$ defined in~\eqref{eq:M_F}, equations~\eqref{eq:dmh_FE_jcal_1} and~\eqref{eq:dmh_FE_jcal_2}
are uniquely solvable for each face $F$ belonging to the interface $\Gamma$, and give:
\begin{align}
& \mathbf{j}_1 = ({\tt E}_1^T)^{-1} {\tt D}_1 \mathbf{J}_1,  
& \label{eq:jcal_1} \\
& \mathbf{j}_2 = ({\tt E}_2^T)^{-1} {\tt D}_2 \mathbf{J}_2, 
& \label{eq:jcal_2}
\end{align}
where $\mathbf{J}_1$ and $\mathbf{J}_2$ are given by~\eqref{eq:affine_equations_for_J}.
Also, since functions in the RT0 space~\eqref{eq:Vvect_K} satisfy the property
$$
\mathbf{V}(K) \cdot \mathbf{n}_{\partial K}|_{F \in \partial K} = M(F) \qquad \forall K \in \mathcal{T}_h
\qquad F \in \partial K,
$$
equations~\eqref{eq:jcal_1} and~\eqref{eq:jcal_2} assume the particularly simple form:
\begin{align}
& \mathcal{J}_{1,h}|_F = \mathbf{J}_{1,h} \cdot \mathbf{n}_{1}|_F 
\qquad \forall F \in \mathcal{F}_{h,\Gamma,1}, & \label{eq:jcal_1_bis} \\
& \mathcal{J}_{2,h}|_F = \mathbf{J}_{2,h} \cdot \mathbf{n}_{2}|_F 
\qquad \forall F \in \mathcal{F}_{h,\Gamma,2}. & \label{eq:jcal_2_bis}
\end{align}
\end{subequations}

\subsubsection{Elimination of the hybrid variables on the interface $\Gamma$}
The seventh equation in the block linear system~\eqref{eq:dmh_FE_linear_system} yields
\begin{subequations}\label{eq:dmh_FE_hybrid_lambda}
\begin{align}
& {\tt E}_1 \widehat{\mathbf{u}}_1 - {\tt U}_1^T \boldsymbol{\lambda} = \mathbf{0}, & \label{eq:dmh_FE_uhat_1} 
\end{align}
whereas the eigth equation in the block linear system~\eqref{eq:dmh_FE_linear_system} yields
\begin{align}
& {\tt E}_2 \widehat{\mathbf{u}}_2 - \kappa {\tt U}_2^T \boldsymbol{\lambda} = \mathbf{0}. & \label{eq:dmh_FE_uhat_2} 
\end{align}
\end{subequations}
Using the same argument as for the variable $\mathcal{J}_h$, we see that equations~\eqref{eq:dmh_FE_hybrid_lambda}
are uniquely solvable for each face $F$ belonging to the interface $\Gamma$, and give:
\begin{subequations}\label{eq:dmh_FE_hybrid_lambda_bis}
\begin{align}
& \widehat{\mathbf{u}}_1 = {\tt E}_1^{-1} {\tt U}_1^T \boldsymbol{\lambda},  
& \label{eq:uhat_1} \\
& \widehat{\mathbf{u}}_2 = \kappa {\tt E}_2^{-1} {\tt U}_2^T \boldsymbol{\lambda}. & \label{eq:uhat_2}
\end{align}
We notice that equations~\eqref{eq:dmh_FE_hybrid_lambda_bis} allow to express the segregation 
condition~\eqref{eq:tx_segr} in the DMH formulation in the same manner as in the 3F method.
\end{subequations}

\subsubsection{Construction of the linear algebraic system}
Having expressed the internal variable $\mathbf{J}_h$ in favor of the hybrid variable $\widehat{u}_h$,
the Lagrange multiplier $\mathcal{J}_h$ in favor of $\mathbf{J}_h$ on $\Gamma$ and
the hybrid variable $\widehat{u}_h$ in favor of the Lagrange multiplier $\lambda_h$ on $\Gamma$,
we proceed as follows:
\begin{description}
\item[(step a)] we use the fifth equation in the block linear system~\eqref{eq:dmh_FE_linear_system}
to enforce the interelement continuity of $\mathbf{J}_{1,h} \cdot \mathbf{n}_{1}|_F$
at each $F \in \mathcal{F}_{h,int,1}$ and the boundary condition~\eqref{eq:bcs} 
at each $F \in \mathcal{F}_{h,\Sigma,1}$. 
\item[(step b)] we use the sixth equation in the block linear system~\eqref{eq:dmh_FE_linear_system}
to enforce the interelement continuity of $\mathbf{J}_{2,h} \cdot \mathbf{n}_{2}|_F$
at each $F \in \mathcal{F}_{h,int,2}$ and the boundary condition~\eqref{eq:bcs} 
at each $F \in \mathcal{F}_{h,\Sigma,2}$;
\item[(step c)] we use the ninth equation in the block linear system~\eqref{eq:dmh_FE_linear_system}
to enforce the transmission condition~\eqref{eq:tx_flux} at each $F \in \mathcal{F}_{h,\Gamma}$.
\end{description}

A graphical representation of each of the above three steps is shown in Figure~\ref{fig:flux_continuity}.
\begin{figure}[h!]
\centering
\includegraphics[width=0.8\textwidth]{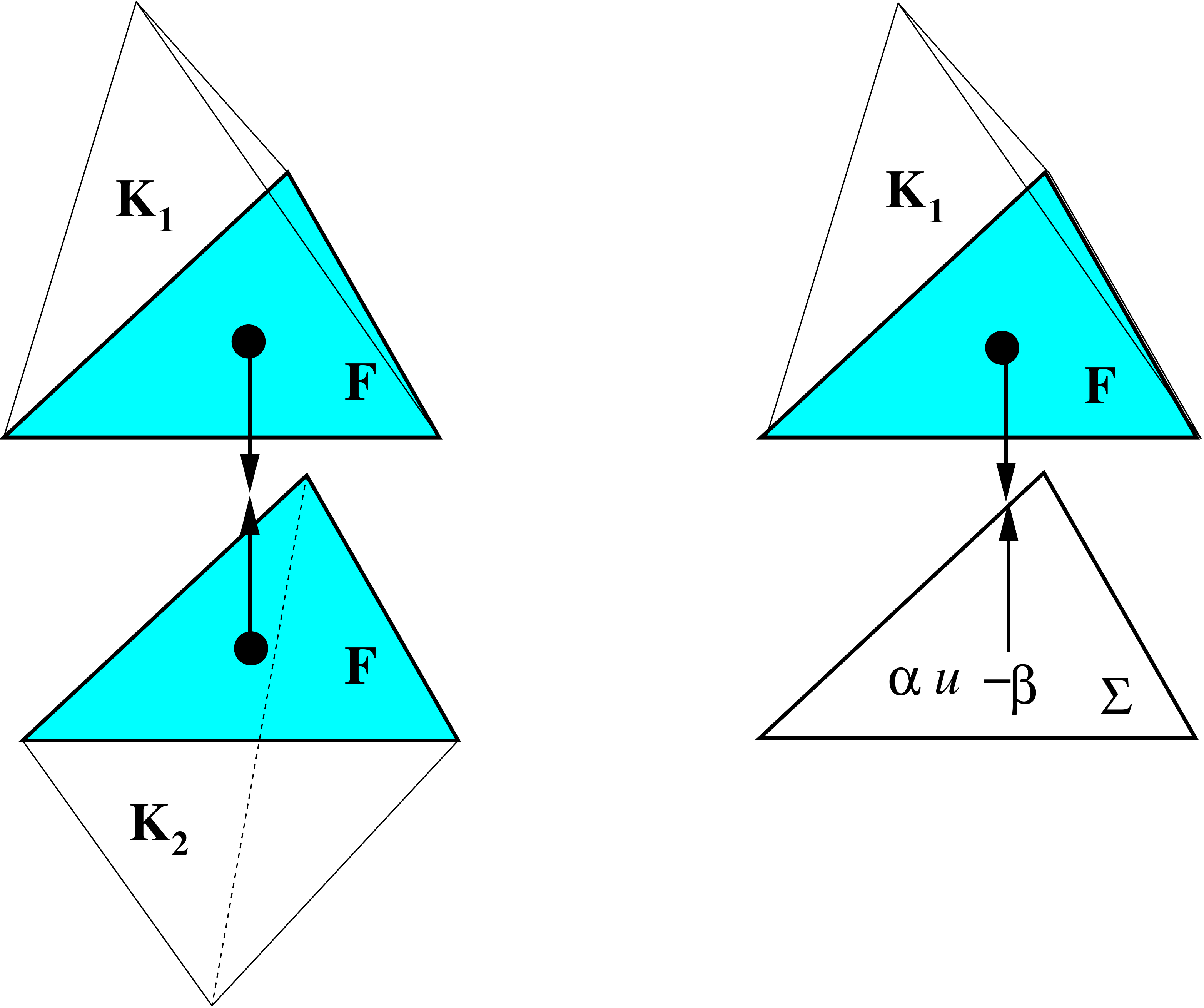}
\caption{Interelement continuity of $\mathbf{J}_{h} \cdot \mathbf{n}|_F$. Left panel: 
$F = \partial K_1 \cap \partial K_2$ (internal face). Right panel: $F = \partial K_1 \cap \Sigma$ 
(boundary face). The arrows represent the degree of freedom of $\mathbf{J}_h|_{K_i}$ associated 
with face $F$ of element $K_i$, $i=1,2$ (left panel) and $i=1$ (right panel). In the case where $F$
is a boundary face the role of $\mathbf{J}_h|_{K_2} \cdot \mathbf{n}$ 
is played by the Robin boundary condition $\alpha u - \beta$.}
\label{fig:flux_continuity}
\end{figure}

The application of the sequence of steps (a), (b) and (c) leads to the construction of 
the following linear reduced system for the DMH-RT0 FEM
\begin{equation}\label{eq:linear_system_reduced_dmh}
{\tt K} \mathbf{U} = \mathbf{t},
\end{equation}
where
$\mathbf{U}\in\mathbb R^{\tt NF}$ is the vector 
of degrees of freedom represented by the values of $\widehat{u}_h$ on each face of
$\mathcal{F}_h$, excluding those belonging to $\Gamma$, and the values of $\lambda_h$ on
each face belonging to $\Gamma$,  ${\tt K} \in \mathbb{R}^{{\tt NF} \times {\tt NF}}$ 
is the stiffness matrix and $\mathbf{t} \in \mathbb{R}^{{\tt NF}}$ is the load vector, with  {\tt NF} denoting the number of faces of $\mathcal{F}_h$.
Each equation in~\eqref{eq:linear_system_reduced_dmh} can be written in explicit form as
\begin{equation}\label{eq:system_explicit}
{\tt K}_{F,F} \mathbf{U}_F + \sum_{G \in {\tt Adj}(F)} {\tt K}_{F,G} \mathbf{U}_G = \mathbf{t}_F \qquad F=1, \ldots, 
{\tt NF},
\end{equation}
where ${\tt Adj}(F)$ denotes the set of faces $G \in \mathcal{F}_h$ that 
have a vertex in common with the closure of $F$. We notice that each row of 
system~\eqref{eq:linear_system_reduced_dmh} corresponding to an internal face $F$ has 7 nonzero entries 
(cf. Figure~\ref{fig:flux_continuity}, left panel)
whereas each row of 
system~\eqref{eq:linear_system_reduced_dmh} 
corresponding to a boundary face $F$ has 4 nonzero entries (cf. Figure~\ref{fig:flux_continuity}, right panel).

\begin{remark}
The unique solvability of~\eqref{eq:linear_system_reduced_dmh} is a consequence of
Lemma~\ref{theorem:wellposedness_dmh_weak_h}. 
\end{remark}

\begin{remark}
The assembly of the stiffness matrix ${\tt K}$ and of the load vector $\mathbf{t}$ 
in~\eqref{eq:linear_system_reduced_dmh} can be conducted  using piecewise linear finite elements for the approximation of the primal variable $u$ as in a standard displacement-based computer code.
In particular, a {\tt for} loop is performed over the elements $K \in \mathcal{T}_h$ and 
for each element the \emph{local} $4 \times 4$ stiffness matrix ${\tt L}_i^K$
and the \emph{local} $4 \times 1$ load vector $\mathbf{t}_i^K=-\mathbf{b}_i^K$, $i=1,2$,
are computed using~\eqref{eq:affine_equations_for_J}. Then, the assembly phase consists of
the following {\tt Matlab} coding:
\begin{verbatim}
for Iloc=1:4,
    I = Lel(K,Iloc);
    for Jloc=1:4
        J = Lel(K,Jloc);
        if (Iloc==Jloc)
           GlobStiffMat(I,I) = GlobStiffMat(I,I) + ...
           LocStiffMat(Iloc,Iloc); 
        else
           GlobStiffMat(I,J) = LocStiffMat(Iloc,Jloc); 
        end
    end
    GlobLoadVec(I) = GlobLoadVec(I) + LocLoadVec(Iloc);
end
\end{verbatim}
In the above code, {\rm K} indicates the global index of element $K$ in the mesh structure,
{\rm Lel} is the connectivity matrix such that {\rm Lel(K,i), i=1,2,3,4} contains the global index of
the face of {\rm K} locally numbered by {\rm i}. In addition,  {\rm GlobStiffMat} and {\rm GlobLoadVec} are 
the global stiffness matrix and global load vector, respectively, whereas {\rm LocStiffMat} and {\rm LocLoadVec} are 
their local counterparts. We notice that the assembly in the DMH-RT0 FEM scheme is performed on a face-oriented basis,
whereas in the standard FEM scheme the assembly is performed on a vertex-oriented basis.
\end{remark}

\subsubsection{Post-processing}

Once the reduced system~\eqref{eq:linear_system_reduced_dmh} is solved, 
the values of $\widehat{u}_h$ on each face of $\mathcal{F}_{h,\Gamma,i}$, $i=1,2$, can be computed
by means of~\eqref{eq:dmh_FE_hybrid_lambda_bis}. Then, the internal 
variables $\mathbf{J}_h$ and  $u_h$ are recovered using~\eqref{eq:affine_equations_for_J} and~\eqref{eq:equations_for_U} over each $K \in \mathcal{T}_h$.

\section{Artificial diffusion stabilization}\label{sec:stabilization}

In this section we describe one of the novel contributions of this article to the theory and development
of dual mixed hybrid methods, namely, the introduction of a stabilization
mechanism that automatically ensures numerical robustness to the scheme 
in the case of advection-dominated regimes, 
a situation that is particularly relevant in the application of problem~\eqref{eq:tx_model} to realistic problems of mass transport in heterogeneous domains.
To quantitatively characterize the weight of advection with respect to the diffusion, for each element $K \in \mathcal{T}_h$, we set 
$\overline{\mathbf{v}}_K:=\mathbf{v}(\mathbf{x}_{B,K})$, where $\mathbf{x}_{B,K}$ 
is the barycenter of $K$, and we define the \emph{local P\`eclet number} as
\begin{align}
& {\rm Pe}_K:= \max_{i=1, \ldots, 6} \frac{|\overline{\mathbf{v}}_K \cdot \mathbf{e}_i|}{2 \mu}, 
\label{eq:local_peclet}
\end{align}
where $\mathbf{e}_i$, $i=1,\ldots, 6$, is the vector connecting two vertices of $K$.
Relation~\eqref{eq:local_peclet} extends to the case of tetrahedral elements the definition 
given in~\cite{fenics_web_page} in the case of triangular elements.
If ${\rm Pe}_K < 1$ the problem is locally
diffusion-dominated whereas if ${\rm Pe}_K > 1$ the problem is locally advection-dominated.
In this latter case, an effective 
approach to prevent the onset of numerical instabilities
consists of introducing an \emph{artificial diffusion tensor} $\boldsymbol{\mu}^{\ast}_K$
constructed in such a way to locally increase the diffusion mechanism. 
Following~\cite{Bank1990,onate2000_cimne} and~\cite[Chapter~6]{quarteroni1994numerical}, 
the modified diffusion tensor to be used in the artificial diffusion method is defined as 
\begin{align}
& (\boldsymbol{\mu}_h)_K = \boldsymbol{\mu}_K + \boldsymbol{\mu}^{\ast}_K 
= \mu \mathbf{I} + \boldsymbol{\mu}^{\ast}_K. \label{eq:modified_diffusion_tensor}
\end{align}
The effect of numerical dissipation is minimized if artificial diffusion is added 
\emph{only} in the streamline direction,
as done in the Streamline Upwind Petrov-Galerkin method introduced in~\cite{BROOKS1982}.
To follow this approach, if $|\overline{\mathbf{v}}_K| \neq 0$, we define the \emph{streamline unit vector}
\begin{subequations}\label{eq:stabilized_diffusion}
\begin{align}
& \boldsymbol{\beta}_K:= \frac{\overline{\mathbf{v}}_K}{|\overline{\mathbf{v}}_K|}
\label{eq:streamline_unit_vector}
\end{align}
and set
\begin{align}
& \boldsymbol{\mu}^{\ast}_K:= \mu \Phi({\rm Pe}_K) 
\boldsymbol{\beta}_K \boldsymbol{\beta}_K^T. \label{eq:artificial_diffusion} %\mu \boldsymbol{\rho}_K = 
\end{align}
The amount of artificial diffusion depends on the \emph{stabilization function} $\Phi$
that is required to satisfy the following properties:
\begin{align}
& \Phi(X) > 0 \qquad \forall X > 0, \label{eq:prop_1_Phi} \\
& \lim_{X \rightarrow 0^+} \Phi(X) = 0^+. \label{eq:prop_2_Phi}
\end{align}
We refer to~\cite{roos2008robust} for a detailed illustration and analysis of several 
choices for $\Phi$. In the numerical examples reported in Section~\ref{sec:numerical_results} we use
the following form of the stabilization function
\begin{align}
& \Phi(X):= X - 1 + {\tt Be}(2X), \label{eq:Phi_stab_SG}
\end{align}
where ${\tt Be}(t):= t/(e^t-1)$ is the inverse of the Bernoulli function. The choice~\eqref{eq:Phi_stab_SG}
satisfies properties~\eqref{eq:prop_1_Phi}-~\eqref{eq:prop_2_Phi}, and in particular it can be seen that
\begin{align}
& \lim_{h_K \rightarrow 0} \Phi({\rm Pe}_K) = \mathcal{O}(h_K^2). \label{eq:limit_Phi_h_0}
\end{align}
The above relation shows that the artificial diffusion based on~\eqref{eq:Phi_stab_SG} 
decreases quadratically as the mesh size becomes small, and because of this asymptotic behavior 
the choice~\eqref{eq:Phi_stab_SG} is referred to as \emph{optimal} artificial diffusion
(see~\cite{BROOKS1982} and~\cite{onate2000_cimne}).
Another popular choice of $\Phi$, that is also implemented in the numerical examples reported in Section~\ref{sec:numerical_results}, is the so-called Upwind method for which 
\begin{align}
& \Phi(X):= X. \label{eq:Phi_stab_upwind}
\end{align}
The upwind stabilization based on~\eqref{eq:Phi_stab_upwind} 
introduces an artificial diffusion that decreases only linearly 
as the mesh size becomes small, therefore worsening the accuracy of the computed solution.
On the other hand, when ${\rm Pe}_K$ becomes large, the optimal artificial diffusion and upwind stabilizations
practically coincide, thereby supporting the use of~\eqref{eq:Phi_stab_SG} in all regimes
instead of~\eqref{eq:Phi_stab_upwind} (see also~\cite{Bank1990} for further discussion of this issue).
\end{subequations}

\section{Spectral analysis of the stabilized diffusion tensor}\label{sec:spectral_analysis_stab}

In this section we study the spectrum of the stabilized
diffusion tensor~\eqref{eq:modified_diffusion_tensor} 
as a function of the transport parameters that characterize the problem at hand.
The analysis is carried out for the stabilization function~\eqref{eq:Phi_stab_SG} but similar considerations
apply to the stabilization function~\eqref{eq:Phi_stab_upwind}.
Denoting by $\Lambda_i$ and $\mathbf{X}_i$, $i=1,2,3$, the eigenvalues and the corresponding
eigenvectors of $(\boldsymbol{\mu}_h)_K$, an explicit computation yields
\begin{subequations}\label{eq:spectrum_mu_h_tensor}
\begin{align}
& \Lambda_1 = \Lambda_2 = \mu, \, \, \Lambda_3 = \mu(1+ \Phi({\rm Pe}_K)), \label{eq:eig_stab} \\
& \mathbf{X}_1 = \left[ -\frac{\beta_{K,y}}{\beta_{K,x}}, \, 1, \, 0 \right]^T, \label{eq:eigenvec_1_stab} \\
& \mathbf{X}_2 = \left[ -\frac{\beta_{K,z}}{\beta_{K,x}}, \, 0, \, 1 \right]^T, \label{eq:eigenvec_2_stab} \\
& \mathbf{X}_3 = \left[ \frac{\beta_{K,x}}{\beta_{K,z}}, \, \frac{\beta_{K,y}}{\beta_{K,z}}, \, 1 \right]^T. 
\label{eq:eigenvec_3_stab}
\end{align}
The stabilized diffusion tensor is a symmetric positive definite $3 \times 3$ matrix. 
Replacing~\eqref{eq:Phi_stab_SG} into the expression of $\Lambda_3$ we obtain
\begin{align}
& \Lambda_3 = \mu ({\rm Pe}_K + {\tt Be}(2 {\rm Pe}_K)). \label{eq:lambda_3_stab}
\end{align} 
If the local P\`eclet number is very small, a Taylor expansion of ${\tt Be}(2 {\rm Pe}_K)$ 
in the neighbourhood of 0 yields $\Lambda_3 = \mu$, so that $(\boldsymbol{\mu}_h)_K$ coincides
with $\boldsymbol{\mu}_K = \mu \mathbf{I}$, as expected, because the problem is \emph{not} 
advection-dominated and thus no stabilization is actually needed. Conversely, if the local P\`eclet 
number is much larger than 1 the quantity ${\tt Be}(2 {\rm Pe}_K)$ can be neglected 
in~\eqref{eq:lambda_3_stab}, yielding
\begin{align}
& \Lambda_3 \simeq \mu {\rm Pe}_K \gg \left\{\Lambda_1, \, \Lambda_2\right\}. \label{eq:lambda_3_stab_max}
\end{align} 
Therefore, in the case where problem~\eqref{eq:tx_model} is locally advection-dominated the three-dimensional
surface representing the spectrum of the stabilized diffusion tensor in the euclidean space $\mathbb{R}^3$
is an ellipsoid centered in the origin, with the $x_1$ and $x_2$ principal axes of equal length 
and with a strongly elongated principal axis $x_3$.
\end{subequations}

\begin{example}
Consider the reference tetrahedron with vertices $[0,0,0]^T$, $[1,0,0]^T$, $[0,1,0]^T$ and $[0,0,1]^T$. %\gio{(non capisco perche' qui ci sono parentesi tonde e non quadre)}.
Assume that $\mathbf{v}_K = [0, \, 0, \, 1]^T$ and that $\mu=10^{-2}$. Using~\eqref{eq:local_peclet}
we get ${\rm Pe}_K=50$, which means that the model is in the advection-dominated regime. The 
artificial diffusion tensor is
$$
\boldsymbol{\mu}^{\ast}_K =
\left[
\begin{array}{lll}
0 & 0 & 0 \\
0 & 0 & 0 \\
0 & 0 & \mu \Phi(2 {\rm Pe}_K) \simeq 0.49
\end{array}
\right],
$$
which shows that the stabilization introduces a contribution \emph{only} along the $z$ axis
that is the streamline direction.
\end{example}

\section{Numerical results}\label{sec:numerical_results}
In this section we perform a thorough validation of the performance of the proposed method.
To this end, we have implemented problem~\eqref{eq:tx_model} 
and the DMH-RT0 FEM scheme proposed for its discretization
within the computational software MP-FEMOS 
(Multi-Physics Finite Element Modeling Oriented Simulator) 
that has been developed by one of the authors~\cite{Mauri2014,Mauri2014currents,Airoldi2015,Mauri_JMO_2016,SACCO_amm_2017}. 
MP-FEMOS is a general-purpose modular code based on the Galerkin Finite Element Method
that is programmed in a fully 3D framework through shared libraries using an object-oriented language (C++).  
Several situations are addressed. In Section~\ref{sec:study_accuracy} the accuracy of the scheme is studied in
two different cases, corresponding to non active and active interface.
In Section~\ref{sec:study_stabilization} the stability of the scheme is studied in different regimes,
corresponding to low and high local P\`eclet numbers.
In all test cases, the simulation domain is the unit cube $(0,1)\times(0,1)\times(0,1)$ with the 
interface at $z=0.5$. Dirichlet boundary conditions are applied at the bottom and top faces of the cube,
with $u=0$ at the bottom and $u=1$ at the top, whereas 
homogeneous Neumann conditions are imposed for $\mathbf{J}$ on the lateral surface.
\begin{figure}[h!]
\centering
\includegraphics[width=0.6\textwidth,height=0.6\textwidth]{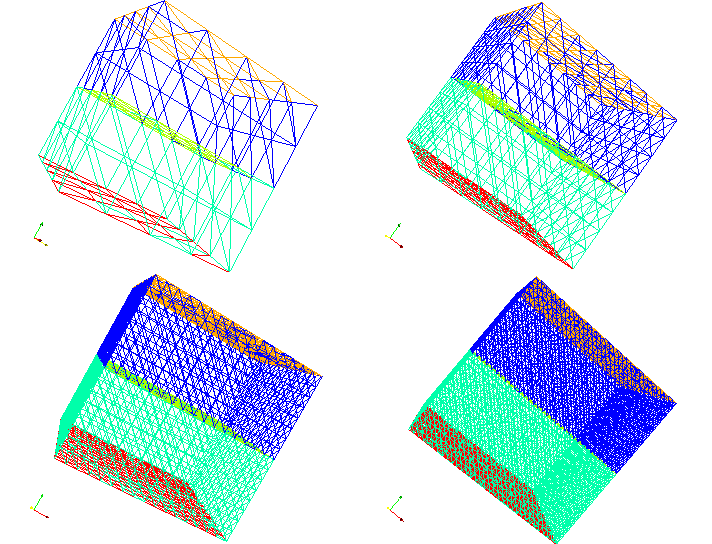}
\caption{Regular triangulations with $h = [0.4330,  \,   0.2165,  \,  0.1083, \,    0.0541]$.}
\label{fig:meshes}
\end{figure}

In the computational examples illustrated in Section~\ref{sec:study_accuracy}, 
the four tetrahedral meshes shown in Figure~\ref{fig:meshes} are used. Partitions 
are made of regular elements, with 
$h = [0.4330,  \,   0.2165,  \,  0.1083, \,    0.0541]$.
For any piecewise smooth function $\eta_h: \mathcal{T}_h \rightarrow \mathbb{R}$, we set
$$
\| \eta_h \|_{\infty,h}:= \max_{K \in \mathcal{T}_h} |\eta_h(\mathbf{x}_{B,K})|.
$$ 
\subsection{Convergence analysis}\label{sec:study_accuracy}
In Section~\ref{sec:noactive_accuracy} we study the accuracy of the DMH-RT0 FEM scheme in the case 
where both $u$ and $\mathbf{J} \cdot \mathbf{n}$ are continuous at $\Gamma$.
In Section~\ref{sec:active_accuracy} we consider the case 
where both $u$ and $\mathbf{J} \cdot \mathbf{n}$ are discontinuous at $\Gamma$.
All test cases considered in Section~\ref{sec:study_accuracy} are conducted in a regime where the P\`eclet number is less than 1 and so the scheme is implemented without stabilization.
The effect of stabilization will be assessed in Section~\ref{sec:study_stabilization}.
%Since in both considered configurations of the interface the P\`eclet number is always less than 1, 
%no artificial stabilization is added to the numerical scheme.

\subsubsection{Non active interface}\label{sec:noactive_accuracy}
Let us set $\mu=1$, $r=1$, $g=1$, $\mathbf v = v_z \mathbf e_3$, $v_z=1$, $\kappa=1$ and $\sigma=0$, 
where $\mathbf e_3$ denotes the unit vector of the $z$-axis shown in Figure~\ref{fig:error_u_and_J_noactive}. 
The exact solution of system~\eqref{eq:tx_model} is the pair:
\begin{align}
& u(z) = \Frac{g}{r} + C_1 e^{\lambda_1 z} + C_2 e^{\lambda_2 z} & \qquad z \in [0,1] \label{eq:u_exact_1} \\
& J(z) = \Frac{v_z g}{r} + C_1 e^{\lambda_1 z}\left( v_z - \mu \lambda_1 \right) 
+ C_2 e^{\lambda_2 z}\left( v_z - \mu \lambda_2 \right) &  \qquad z \in [0,1], \label{eq:J_exact_1}
\end{align}
where $\lambda_{1,2} = (v_z \pm \sqrt{v_z^2 + 4 r \mu})/(2 \mu)$ and
\begin{align*}
& C_1 = \Frac{-1 + \Frac{g}{r}(1-e^{\lambda_2})}{e^{\lambda_2} - e^{\lambda_1}}, \qquad 
C_2 = \Frac{1 - \Frac{g}{r}(1-e^{\lambda_1})}{e^{\lambda_2} - e^{\lambda_1}}. 
\end{align*}
%(Tutte queste quantita' sono adimensionali?) 
% The values of the constants in Lemma~\ref{theorem:wellposedness_dmh_weak} are $c_0 = \mu_{min} = 1$, 
% $\mathcal{M}=55$ and $\max \left( \kappa, \| \alpha \|_{\infty, \Sigma} \right)=1$. 
% Assuming $C^\ast = \mathcal{O}(h^2)$, we see that conditions~\eqref{eq:v_bounds} and~\eqref{eq:kappa_bounds} are satisfied, so that Lemma~\ref{theorem:wellposedness_dmh_weak} ensures that the DMH formulation is well posed.
Figure~\ref{fig:error_u_and_J_noactive} (left panel) illustrates the errors associated with
the scalar variable $u$ whereas Figure~\ref{fig:error_u_and_J_noactive} (right panel)
shows the errors associated with the vector variable $\mathbf{J}$.
Results indicate that: (i) the DMH formulation is linearly converging with respect to (w.r.t.) 
the graph norm in the $L^2\times H({\rm div})$-topology; (ii) $u_h$ quadratically converges to the value of 
$u$ at the barycenters of $\mathcal{T}_h$; and (iii) $\widehat{u}_h$ 
quadratically converges to the value of $u$ w.r.t. 
the discrete maximum norm and the $L^2$ norm. These outcomes are in complete agreement with 
the theoretical estimates of Section~\ref{sec:DMH_FE_method} and with existing
theoretical estimates for the DMH formulation applied to the solution of elliptic boundary value problems
on a single domain (see~\cite{ArnoldBrezzi1985,douglas_roberts_1985,brezzifortin1991,arbogast_and_chen_1995}).
\begin{figure}[h!]
\centering
\includegraphics[width=0.48\textwidth,height=0.5\textwidth]{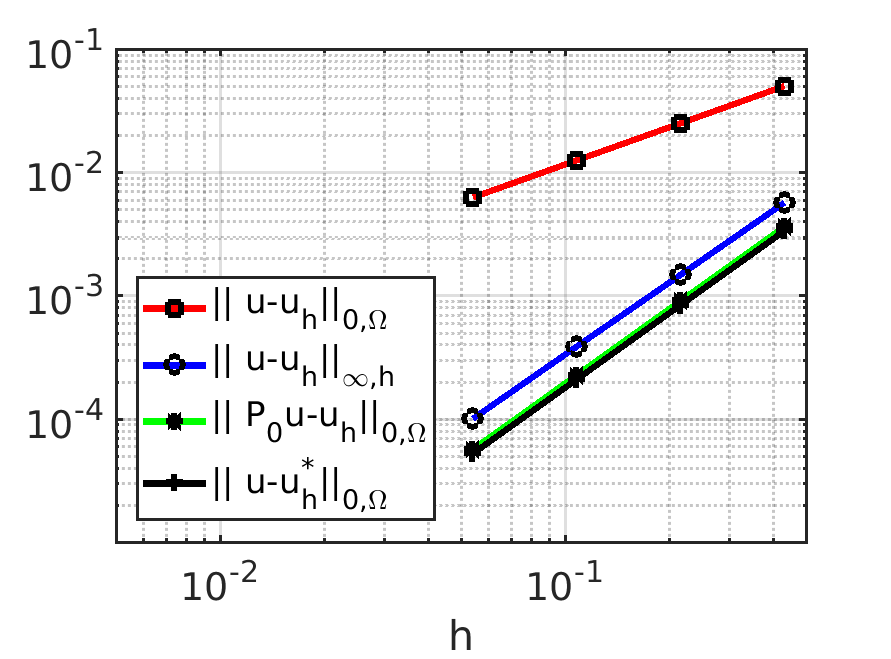}
\includegraphics[width=0.48\textwidth,height=0.5\textwidth]{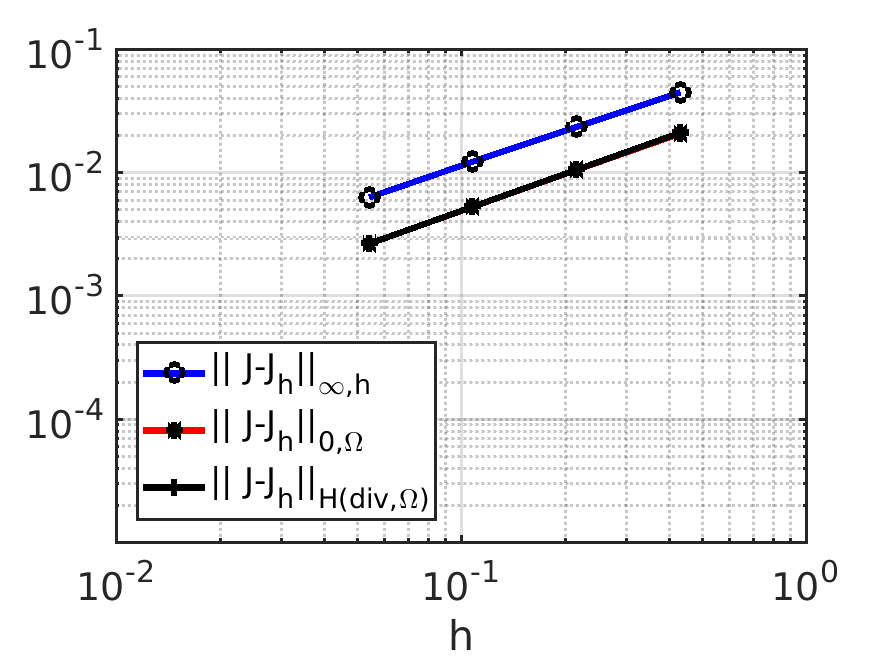}
\caption{Error curves for the DMH method. The values of model coefficients are: 
$\mu=1$, $r=1$, $g=1$, $\mathbf v = v_z \mathbf e_3$, $v_z=1$, $\kappa=1$ and $\sigma=0$. Left panel:
$\| u - u_h \|_{0, \Omega}$ (red curve);
$\| u - u_h\|_{\infty,h}$ (blue curve); $\| P_0 u - u_h \|_{0, \Omega}$ (green curve); 
$\| u - u_h^\ast \|_{0, \Omega}$ (black curve). Right panel: 
$\| \mathbf{J} - \mathbf{J}_h\|_{\infty,h}$ (blue curve); 
$\| \mathbf{J} - \mathbf{J}_h\|_{0,\Omega}$ (red curve); 
$\| \mathbf{J} - \mathbf{J}_h\|_{H({\rm div};\Omega)}$ (black curve).}
\label{fig:error_u_and_J_noactive}
\end{figure}

\subsubsection{Active interface}\label{sec:active_accuracy}

Let us set $\mu=1$, $r=1$, $g=1$, $\mathbf v = v_z \mathbf e_3$, $v_z=1$, as in the previous section, and let us set 
$\kappa=2$ and $\sigma=1$ to model the active interface. 
% Also with these data, Lemma~\ref{theorem:wellposedness_dmh_weak} 
% ensures that the DMH formulation is well posed. 
The exact solution of system~\eqref{eq:tx_model} is the pair:
\begin{align}
& u(z) = \Frac{g}{r} + C_1 e^{\lambda_1 z} + C_2 e^{\lambda_2 z} & \qquad z \in [0,0.5) \label{eq:u_exact_2_left} \\
& J(z) = \Frac{v_z g}{r} + C_1 e^{\lambda_1 z}\left( v_z - \mu \lambda_1 \right) 
+ C_2 e^{\lambda_2 z}\left( v_z - \mu \lambda_2 \right) &  \qquad z \in [0,0.5), \label{eq:J_exact_2_left} \\
& u(z) = \Frac{g}{r} + C_3 e^{\lambda_1 z} + C_4 e^{\lambda_2 z} & \qquad z \in [0.5, 1] \label{eq:u_exact_2_right} \\
& J(z) = \Frac{v_z g}{r} + C_3 e^{\lambda_1 z}\left( v_z - \mu \lambda_1 \right) 
+ C_4 e^{\lambda_2 z}\left( v_z - \mu \lambda_2 \right) &  \qquad z \in [0.5,1], \label{eq:J_exact_2_right}
\end{align}
where $\lambda_{1,2} = (v_z \pm \sqrt{v_z^2 + 4 r \mu})/(2 \mu)$ and
the four constants $C_k$, $k=1,2,3,4$ are the solutions of the following linear system
\begin{align*}
& \mathcal{C} \mathbf{c} = \mathbf{g}, & 
\end{align*}
where $\mathbf{c} = [C_1, C_2, C_3, C_4]^T$, 
\begin{eqnarray*}
\mathcal{C} =
\left[
\begin{array}{llll}
1 & 1 & 0 & 0 \\
e^{\lambda_1/2}(v_z - \mu \lambda_1) & e^{\lambda_2/2}(v_z - \mu \lambda_2) &
- e^{\lambda_1/2}(v_z - \mu \lambda_1) & - e^{\lambda_2/2}(v_z - \mu \lambda_2) \\
- \kappa e^{\lambda_1/2} & - \kappa e^{\lambda_2/2} & e^{\lambda_1/2} & e^{\lambda_2/2} \\
0 & 0 & e^{\lambda_1} & e^{\lambda_2}
\end{array}
\right],
\end{eqnarray*}
and
\begin{eqnarray*}
\mathbf{g} =
\left[
\begin{array}{l}
- \Frac{g}{r} \\[2mm]
-\sigma \\
(\kappa -1) \Frac{g}{r} \\
1 - \Frac{g}{r}
\end{array}
\right].
\end{eqnarray*}
The error curves obtained for this
problem are shown in Figure~\ref{fig:error_u_and_J_active}. Results are very similar to 
those obtained in the case of a nonactive interface.
\begin{figure}[h!]
\centering
\includegraphics[width=0.48\textwidth,height=0.5\textwidth]{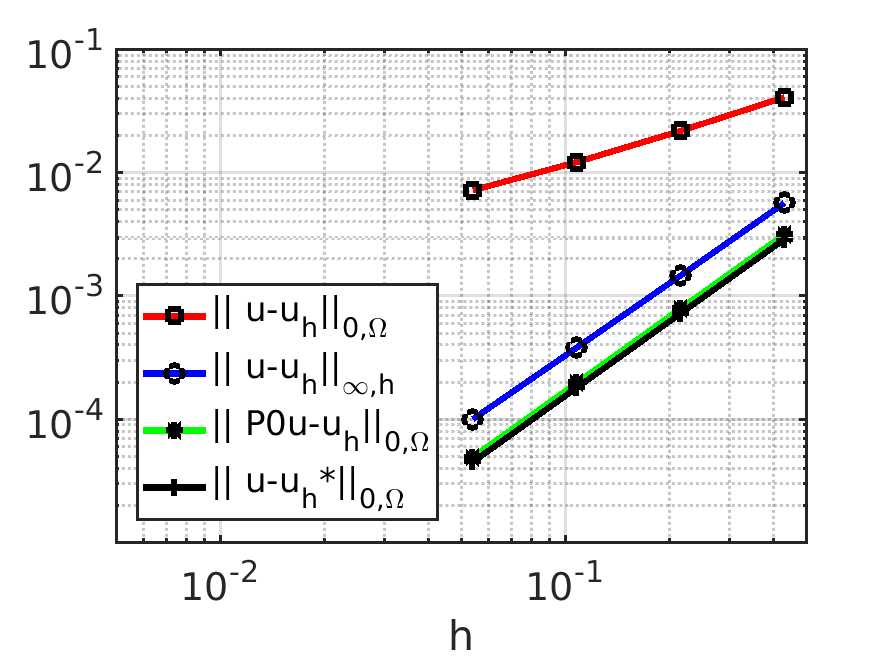}
\includegraphics[width=0.48\textwidth,height=0.5\textwidth]{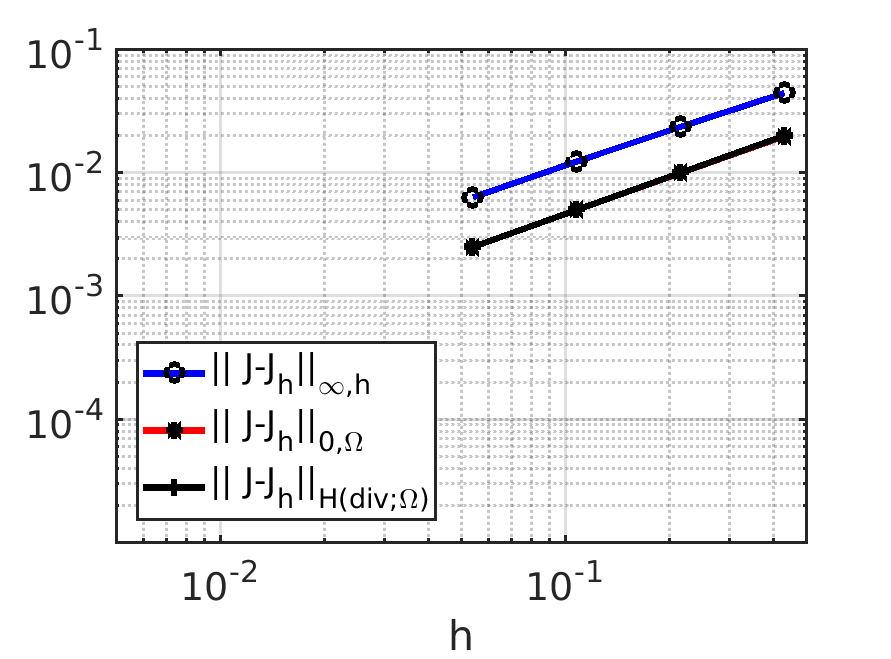}
\caption{Error curves for the DMH-RT0 FEM. The values of model coefficients are: 
$\mu=1$, $r=1$, $g=1$, $\mathbf v = v_z \mathbf e_3$, $v_z=1$, $\kappa=2$ and $\sigma=1$. Left panel:
$\| u - u_h \|_{0, \Omega}$ (red curve);
$\| u - u_h\|_{\infty,h}$ (blue curve); $\| P_0 u - u_h \|_{0, \Omega}$ (green curve); 
$\| u - u_h^\ast \|_{0, \Omega}$ (black curve). Right panel: 
$\| \mathbf{J} - \mathbf{J}_h\|_{\infty,h}$ (blue curve); 
$\| \mathbf{J} - \mathbf{J}_h\|_{0,\Omega}$ (red curve); 
$\| \mathbf{J} - \mathbf{J}_h\|_{H({\rm div};\Omega)}$ (black curve).}
\label{fig:error_u_and_J_active}
\end{figure}

A three-dimensional view of the solutions $u_h$ and $J_{h,z}$ computed by the 
DMH-RT0 FEM scheme is reported in 
Figure~\ref{fig:solutions_active_3D} whereas Figure~\ref{fig:solutions_active_1D} shows a 
a cross-sectional view of the along the $z$-axis of the same computed quantities. 
Results indicate that the method is able to accurately capture the jump discontinuity even with a 
rather coarse partition of the domain.
\begin{figure}[h!]
\centering
\includegraphics[width=0.8\textwidth,height=0.6\textwidth]{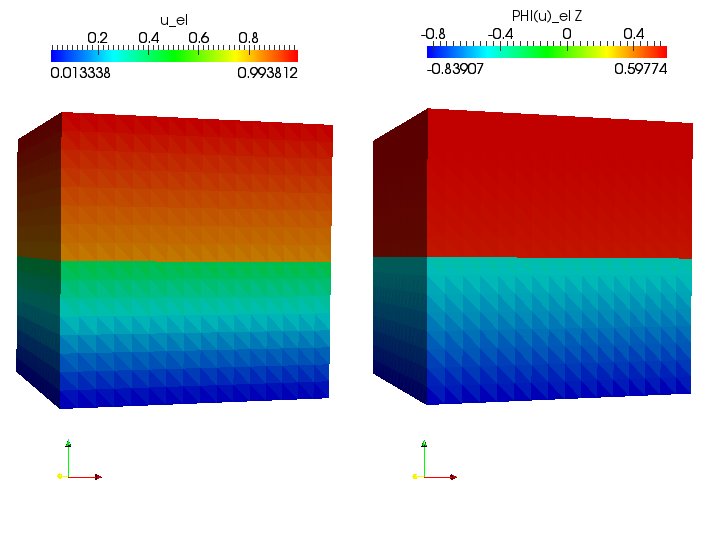}
\caption{3D color plots of the solutions computed by the DMH-RT0 FEM scheme. 
Left panel: values of $u_h$ at the barycenter of each element. 
Right panel: values of $J_{z,h}$ at the barycenter of each element.}
\label{fig:solutions_active_3D}
\end{figure}

\begin{figure}[h!]
\centering
\includegraphics[width=0.48\textwidth,height=0.5\textwidth]{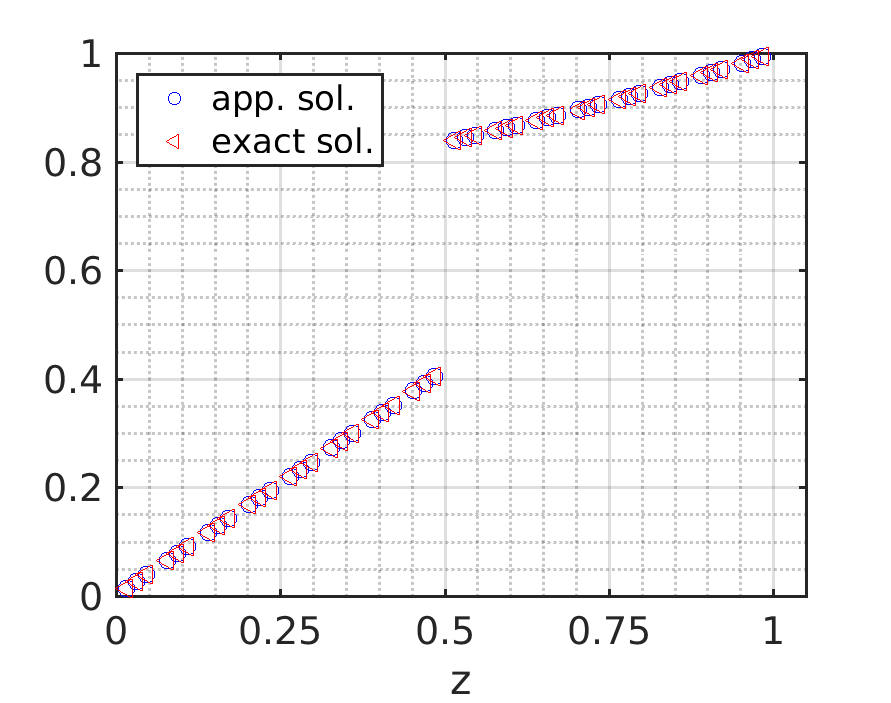}
\includegraphics[width=0.48\textwidth,height=0.5\textwidth]{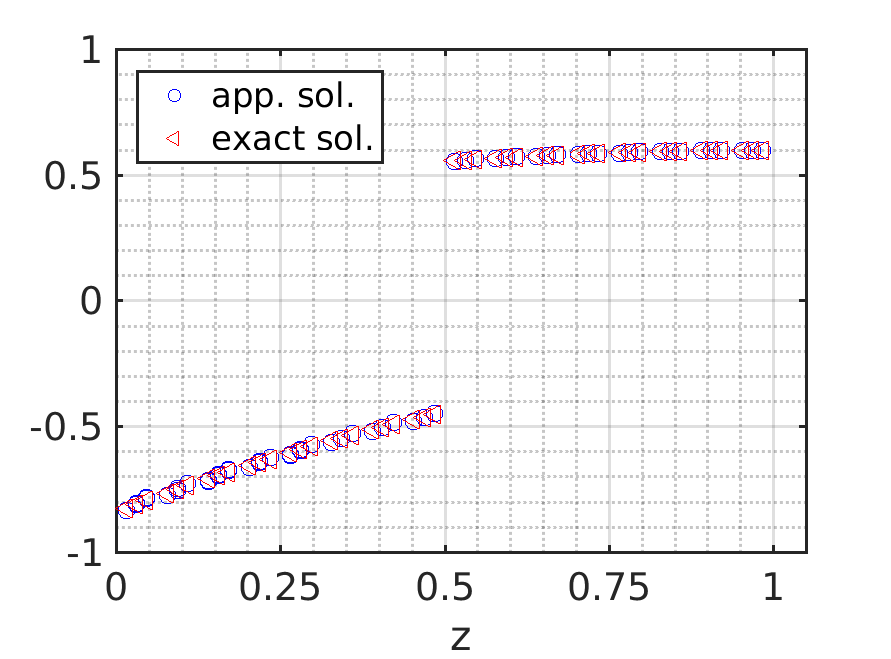}
\caption{1D plots of the solutions computed by the DMH-RT0 FEM scheme along the $z$-axis.
Left panel: values of $u_h$ at the barycenter of each element. 
Right panel: values of $J_{z,h}$ at the barycenter of each element.}
\label{fig:solutions_active_1D}
\end{figure}

\subsection{The effect of stabilization}\label{sec:study_stabilization}
In this section we carry out a verification of the effect of the streamline artificial
diffusion on the stability properties of the DMH method in the presence of a dominating advective term.
The tetrahedral mesh is the same in all the tested cases with $h = 0.108253$.

\subsubsection{Non active interface}\label{sec:noactive}
Here we consider the case where the interface is not active and so we consider the same parameter choice as in Section~\ref{sec:noactive_accuracy} where, in particular, $\kappa=1$ and $\sigma=0$.
Figure~\ref{fig:u_h_star} shows a cross-sectional view of the reconstructed solution $u^\ast_h$ along the $z$-axis
in correspondance of six increasing values of the local P\`eclet number obtained with the following data:
value nr. 1: $\mu=0.5$, $v_z=1$; value nr. 2: $\mu=0.0125$, $v_z = 0.625$; values from nr. 3 to nr. 6:
$v_z = 0.625$ and $\mu = \left\{ 6.25 \cdot 10^{-3}, 3.125 \cdot 10^{-3}, 1.5625 \cdot 10^{-3}, 
7.8125 \cdot 10^{-4} \right\}$. 
\begin{figure}[h!]
% \centering
% \subfigure[]{
\includegraphics[width=0.32\textwidth,height=0.3\textwidth]{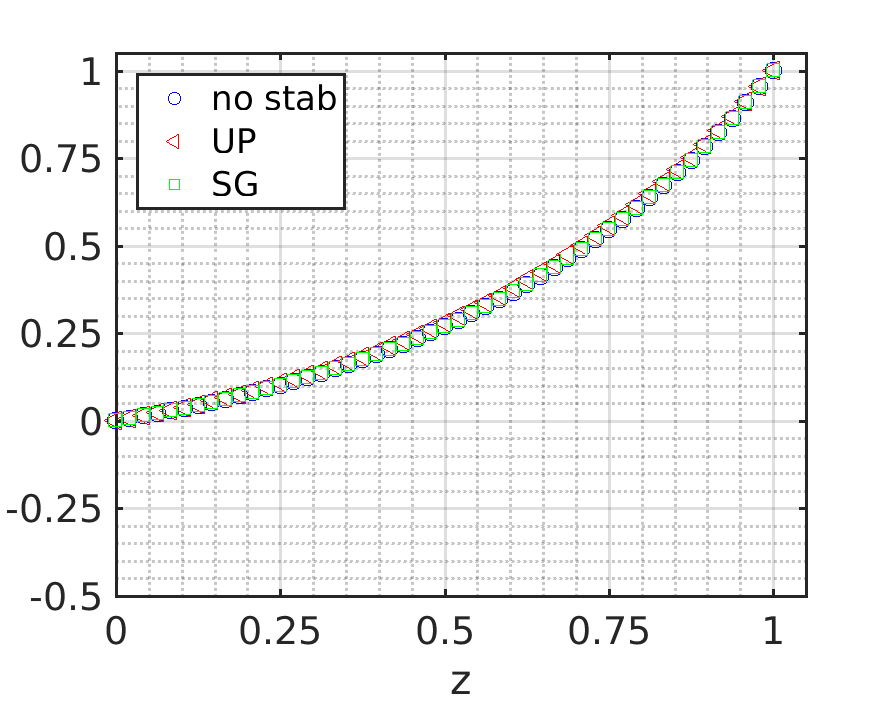}
% \label{fig:no_inst0}}
% \subfigure[]{
\includegraphics[width=0.32\textwidth,height=0.3\textwidth]{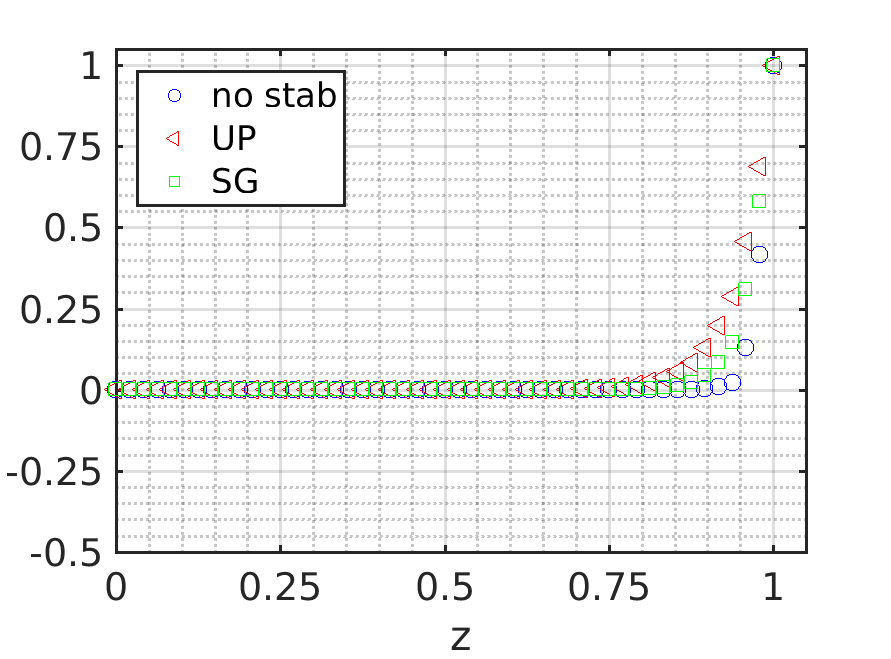}
% \label{fig:no_inst1}}
% \subfigure[]{
\includegraphics[width=0.32\textwidth,height=0.3\textwidth]{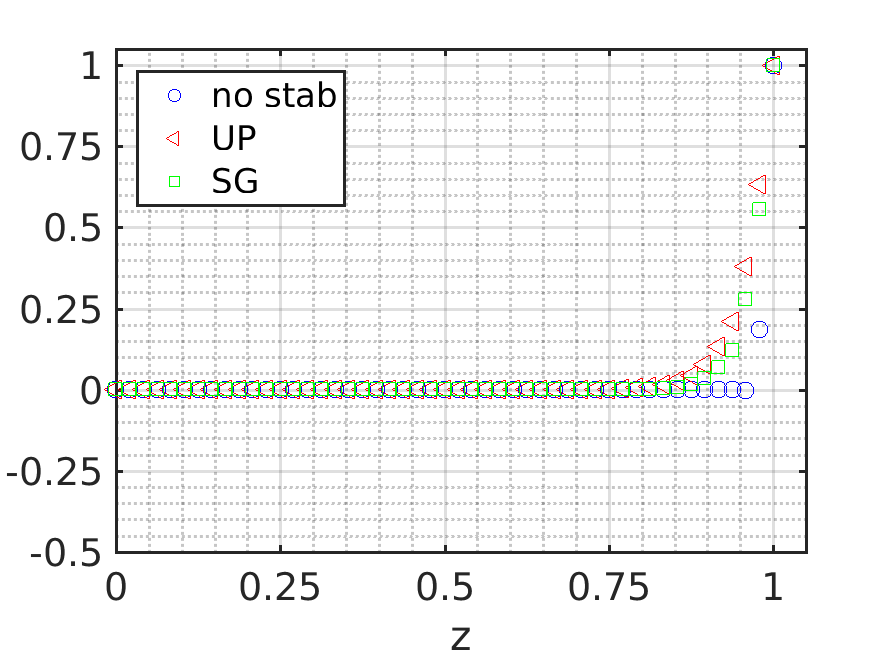}
%\label{fig:inst0}}
\\
\vspace{-6pt}
%\subfigure[]{
\includegraphics[width=0.32\textwidth,height=0.3\textwidth]{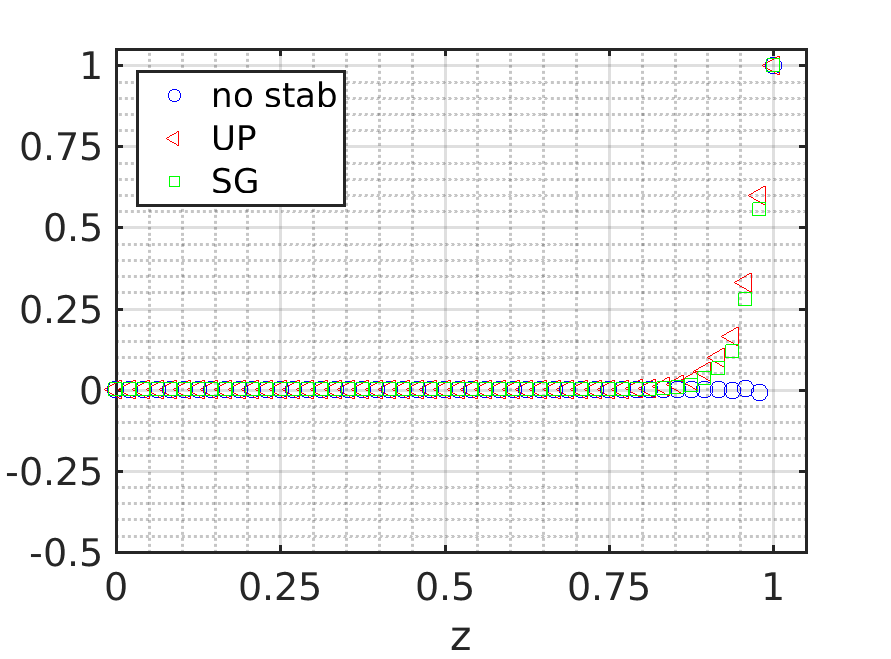}
%\label{fig:inst1}}
%\subfigure[]{
\includegraphics[width=0.32\textwidth,height=0.3\textwidth]{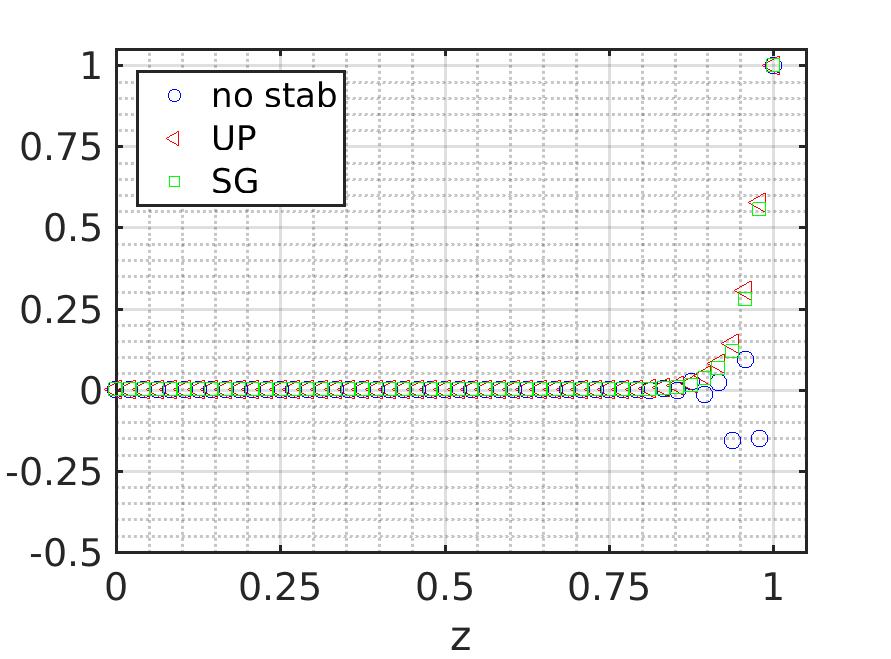}
%\label{fig:inst2}}
%\subfigure[]{
\includegraphics[width=0.32\textwidth,height=0.3\textwidth]{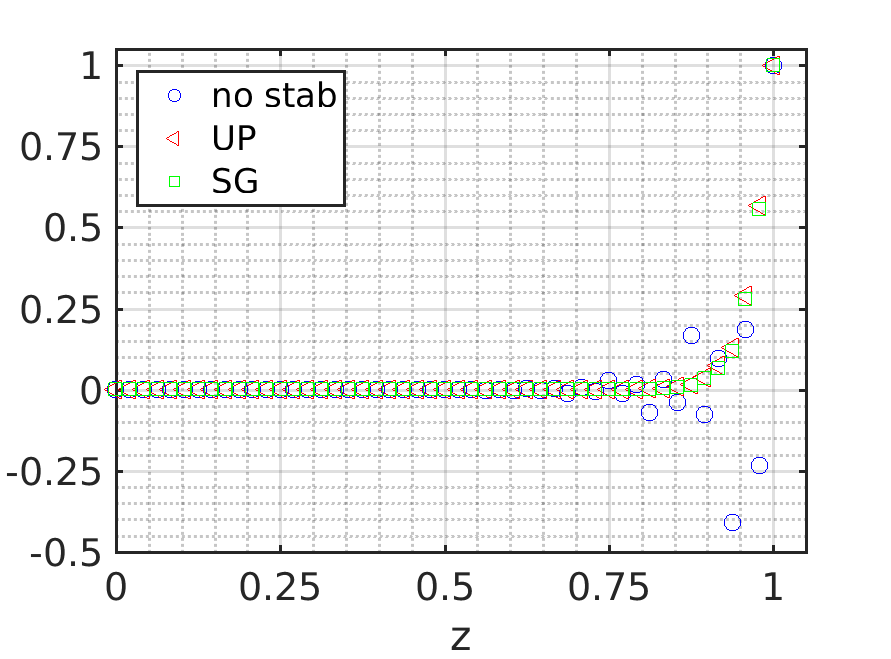}
\caption{1D plot of the computed solution $u^\ast_h$ along the $z$-axis. 
Blue circles: unstabilized solution. Green squares: SG stabilization. Red triangles: Upwind stabilization. Top row.
left panel: ${\rm Pe}_K=0.1083$; middle panel: ${\rm Pe}_K=2.7063$; right panel: ${\rm Pe}_K=5.4127$.
Bottom row. left panel: ${\rm Pe}_K=10.8253$; middle panel: ${\rm Pe}_K=21.6506$; 
right panel: ${\rm Pe}_K=42.3012$.}
\label{fig:u_h_star}
\end{figure}

Results show that as ${\rm Pe}_K$ increases, the non stabilized method starts to display 
spurious unphysical oscillations in the boundary layer region, which tend to propagate backwards 
throughout the computational domain because of the markedly hyperbolic behavior of the problem.
On the contrary, the stabilized method is characterized by a robust behavior
with respect to the increase of the local P\`eclet number, showing in particular that 
the SG stabilized DMH method computes a solution that is much more accurate than that 
computed by the Upwind stabilized in the boundary layer region.

\subsubsection{Active interface}\label{sec:active}
In this section, we assume that the interface located
at $z=0.5$ is active and set $\kappa=2$ and $\sigma=1$.
Moreover, the values of model coefficients are selected in such a way that the problem is diffusion-dominated in one subregion and advection-dominated in the other region.
Specifically, in the first case of study we set
$\mu= 1$, $\mathbf v = v_z \mathbf e_3$, $v_z=1$, in $\Omega_1$ and $\mu=0.0325$, 
$\mathbf v = v_z \mathbf e_3$ in $\Omega_2$, so that
${\rm Pe}_K|_{\Omega_1} = 0.0541$ and ${\rm Pe}_K|_{\Omega_2} = 1.6654$. In the second case of 
study we set $\mu=1$, $v_z = 1$ in $\Omega_1$ and $\mu=0.008125$, $v_z = 1$ in $\Omega_2$, so that
${\rm Pe}_K|_{\Omega_1} = 0.0541$ and ${\rm Pe}_K|_{\Omega_2} = 6.6618$.
Thus, in both cases of study the problem is diffusion-dominated in $\Omega_1$ and advection-dominated 
in $\Omega_2$. 
Figure~\ref{fig:inst_disc} shows a cross-sectional view of the reconstructed solution $u^\ast_h$ 
along the $z$-axis. In both cases we see that: (a) the non stabilized and stabilized solutions
correctly capture the sharp discontinuity at $z=0.5$; (b) the non stabilized solution displays
increasing instabilities in the boundary layer region at $z=1$ as ${\rm Pe}_K$ increases;
(c) the two stabilized solutions capture the boundary layer without any unphysical oscillations.

\begin{figure}[h!]
\centering
%\subfigure[]{
\includegraphics[width=0.48\textwidth,height=0.48\textwidth]{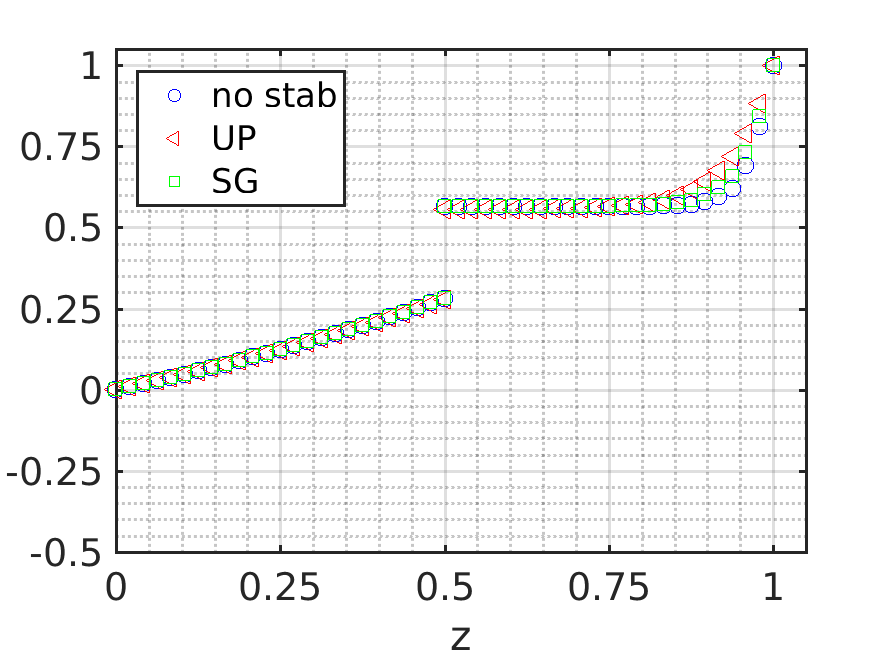}
%\label{fig:inst2_disc}}
%\subfigure[]{
\includegraphics[width=0.48\textwidth,height=0.48\textwidth]{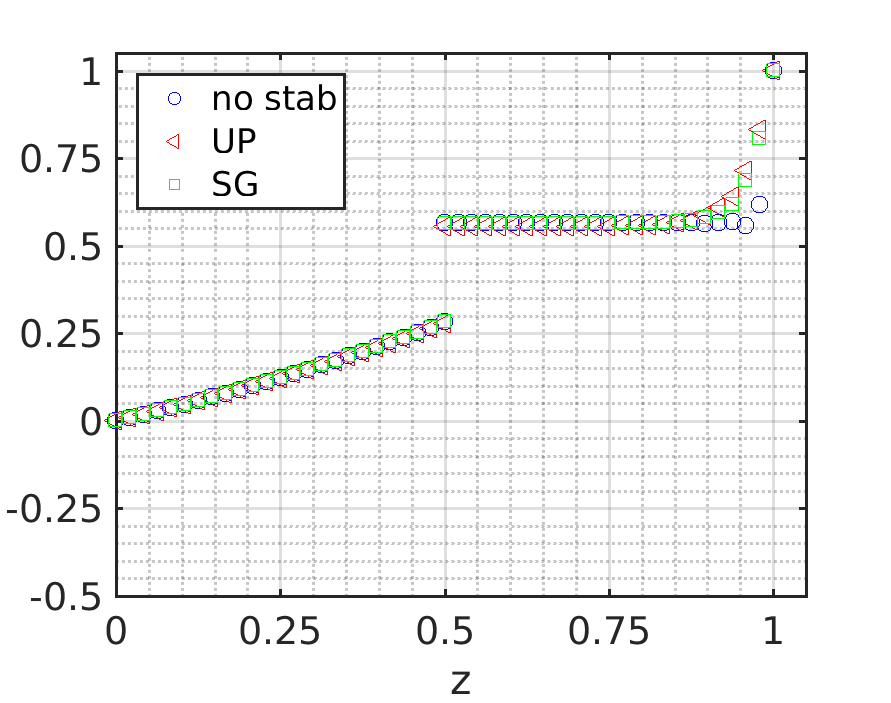}
%\label{fig:inst2_disc2}}
%\vspace*{-6pt}
\caption{1D plot of the computed solution $u^\ast_h$ along the $z$-axis. 
Blue circles: unstabilized solution. Green squares: SG stabilization. Red triangles: Upwind stabilization. 
$u^\ast_h$ along the $z$-axis. Left panel: $\mu=1$, $v_z = 1$ in $\Omega_1$, ${\rm Pe}_K|_{\Omega_1} = 0.0541$; 
$\mu=0.0325$, $v_z = 1$ in $\Omega_2$, ${\rm Pe}_K|_{\Omega_2} = 1.6654$. 
Right panel: $\mu=1$, $v_z = 1$ in $\Omega_1$, ${\rm Pe}_K|_{\Omega_1} = 0.0541$; 
$\mu=0.008125$, $v_z = 1$ in $\Omega_2$, ${\rm Pe}_K|_{\Omega_2} = 6.6618$.}
\label{fig:inst_disc}
\end{figure}

\section{Conclusions and perspectives}\label{sec:conclusions}

In this work we have proposed, analyzed and numerically validated 
a novel dual mixed hybrid (DMH) finite element method (FEM),
based on the Raviart-Thomas finite element space of lowest order (RT0), 
for the numerical approximation of a boundary value problem with diffusive, 
advective and reactive terms to be solved
in a three-dimensional domain with transmission conditions across a selective interface.

The new formulation combines in a unified framework %the introduction of 
a pair of Lagrange multipliers
%as in the Three-Field method, 
to account for the interface conditions, with the 
dual mixed hybrid method for the weak formulation and discretization of the 
problem. To stabilize the computation against advection dominance, 
an artificial diffusion is introduced along the streamline direction, as in the SUPG method.

The resulting scheme is a flexible and robust numerical approach for the treatment of
heterogeneous problems where model coefficients may be subject to wide variations over the
partitioned computational domain and sharp discontinuities of the primal variable and of the
associated flux density may occur at the interface.

The well-posedness of the scheme is analyzed using the abstract theory of saddle-point
problems and optimal error estimates are proved with respect to the finite element discretization parameter. 
An efficient implementation of the method within the computational platform MP-FEMOS
is made possible by the use of static condensation 
to eliminate the internal variables and the Lagrange multipliers for the dual variable at the 
interface in favor of the hybrid variable and of the Lagrange multiplier for the primal variable at the 
interface. 

Extensive numerical tests demonstrate the theoretical conclusions and indicate 
that the proposed DMH-RT0 FEM scheme is accurate and stable in the presence of marked 
interface jump discontinuities of both solution and associated normal flux. In the case 
of strongly dominating advective terms, 
the proposed method is capable to accurately resolve steep boundary and/or interior layers
without introducing spurious unphysical oscillations or excessive smearing of the solution front.

Next objectives of the ongoing research activity on the proposed method include:
\begin{itemize}
\item extending the implementation to domains with multiple interfaces.
This will allow us to study more realistic physical situations such as the case of the interaction between
two cellular compartments separated by an extracellular fluid~\cite{Graham_2006} 
or the case of the design of advanced
memories in nanoelectronics in which materials are characterized by the presence of localized 
defects~\cite{weckx2013defect};
\item extending the mathematical model to nonlinear transmission conditions at the interface.
This will allow us to study more realistic physical problems such as the case of semipermeable 
membranes~\cite{Hron2011} or biochemical reactions in cellular biology~\cite{wood_1999};
\item extending the numerical approach and its application to transmission models 
to Hybridizable Discontinuous Galerkin (HDG) methods.
This will allow us to benefit from the high flexibility of the HDG computational framework, in particular
the possibility of adopting standard polynomial basis functions for \emph{both} primal and dual variables,
including the case of equal-order interpolation~\cite{Cockburn_proceedings};
\item extending the DMH numerical scheme to the case where the geometrical discretization of
the domain $\Omega$ is \emph{not} fitted with the interface $\Gamma$.
This will allow us to combine the discontinuous features of the DMH method with 
the flexible and efficient computational framework of Extended Finite Elements (XFEM), as 
recently analyzed in~\cite{Flemisch2016} in the study of XFEM-based approximation of 
flow in fractured porous media;
\item extending the theoretical analysis of the convergence of the scheme to include the 
artificial diffusion stabilization of Section~\ref{sec:stabilization}. 
This will allow us to characterize the effect of the 
perturbation term $\boldsymbol{\mu}^{\ast}_K$ in~\eqref{eq:modified_diffusion_tensor} 
on the accuracy of the method as $h$ is sufficiently small by estimating the introduced consistency error 
with the Strang Lemma~\cite[Chapter~5]{quarteroni1994numerical}. 
\end{itemize}

\section*{Acknowledgements}
The authors gratefully acknowledge Prof. Bernardo Cockburn for fruitful discussions on the subject 
of the article. Giovanna Guidoboni has been partially supported by the 
Chair Gutenberg funds of the Cercle Gutenberg (France) and by 
the Labex IRMIA (University of Strasbourg, France). Riccardo Sacco is a Member of the INdAM Research group GNCS
and has been partially supported by Micron Semiconductor Italia S.r.l.,
statement of work \#4505462139: "Modeling of tunneling and charging dynamics", contractors: 
Micron Semiconductor Italia S.r.l.; Dipartimento di Matematica Politecnico di Milano, Italy.

\appendix
\section{Generalized saddle-point problems}\label{sec:generalized_saddle_point_pbs}
In this appendix we consider the following generalized saddle-point problem: 
Find ${\tt u} \in \mathcal{V}$ and ${\tt p} \in \mathcal{Q}$ such that:
\begin{subequations}\label{eq:abstract_SP_problem}
\begin{align}
& a({\tt u}, {\tt v})  + b({\tt v}, {\tt p}) = F({\tt v}) \qquad \forall {\tt v} \in \mathcal{V}, \\
& b({\tt u}, {\tt q})  - c({\tt p}, {\tt q}) = G({\tt q}) \qquad \forall {\tt q}\in \mathcal{Q},
\end{align}
\end{subequations}
where $\mathcal{V}$ and $\mathcal{Q}$ are Hilbert spaces with norms $\| \cdot \|_{\mathcal{V}}$
and $\| \cdot \|_{\mathcal{Q}}$ whereas
$a$, $b$, $c$, $F$ and $G$ are bilinear forms and linear functionals such that:
\begin{subequations}\label{eq:bilinear_and_linear_forms}
\begin{align}
& a({\tt u}, {\tt v}): \mathcal{V} \times \mathcal{V} \rightarrow \mathbb{R}, \label{eq:form_a} \\
& b({\tt u}, {\tt q}): \mathcal{V} \times \mathcal{Q} \rightarrow \mathbb{R}, \label{eq:form_b} \\
& c({\tt p}, {\tt q}): \mathcal{Q} \times \mathcal{Q} \rightarrow \mathbb{R}, \label{eq:form_c} \\
& F({\tt v}): \mathcal{V} \rightarrow \mathbb{R}, \label{eq:form_F} \\
& G({\tt q}): \mathcal{Q} \rightarrow \mathbb{R}, \label{eq:form_G}
\end{align}
In the remainder of the section, denote by 
\begin{align}
& \mathcal{V}^{0} = \left\{ {\tt v} \in \mathcal{V}, \, \, 
b({\tt v}, {\tt q})= 0 \, \, \forall {\tt q} \in \mathcal{Q} \right\} & \label{eq:kernel_space}
\end{align}
the kernel of the bilinear form $b$.
\end{subequations}
In the following we address the well-posedness analysis of~\eqref{eq:abstract_SP_problem} and of its 
Galerkin finite element approximation. 
We refer to~\cite{brezzifortin1991},~\cite{RobertsThomas1991},~\cite{quarteroni1994numerical} 
and~\cite{Boffi2013} for further details and examples.

\subsection{The continuous case}\label{sec:saddle_point_continuous}
Several theoretical results for establishing the well-posedness of~\eqref{eq:abstract_SP_problem}
have been obtained in the case $c=0$. 
The case $c\neq 0$  is treated in~\cite{brezzifortin1991}
and, in more detail, in~\cite{Boffi2013}. 
A more general setting including two bilinear forms $b_1$, $b_2$, with $b_1 \neq b_2$,
is studied in~\cite{ciarlet_Jr_2003}.
A further extension is developed in~\cite{Lamichhane2008}, where the authors analyze 
a Petrov-Galerkin formulation. 
The following result can be derived from Theorem 2 of~\cite{Lamichhane2008}.

%\begin{theorem}\normalfont{\textbf{(Existence and uniqueness of solutions to generalized saddle-point problems)}}
%\label{theo:existence_uniqueness_abstract}
\vspace{.3cm}

\noindent\textbf{Theorem A.1. (Existence and uniqueness of solutions to generalized saddle-point problems)}
Let us consider the generalized saddle-point problem~\eqref{eq:abstract_SP_problem}.
Assume that there exist positive constants $M_a$, $M_b$ and $M_c$ such that:
\begin{subequations}\label{eq:assumptions_continuity_forms}
\begin{align}
& |a({\tt u}, {\tt v})| \leq M_a \| {\tt u} \|_{\mathcal{V}} \| {\tt v} \|_{\mathcal{V}}
\qquad {\tt u}, {\tt v} \in \mathcal{V}, \label{eq:continuity_a} \\
& |b({\tt v}, {\tt q})| \leq M_b \| {\tt v} \|_{\mathcal{V}} \| {\tt q} \|_{\mathcal{Q}}
\qquad {\tt v} \in \mathcal{V}, \, {\tt q} \in \mathcal{Q}, \label{eq:continuity_b} \\
& |c({\tt p}, {\tt q})| \leq M_c \| {\tt p} \|_{\mathcal{Q}} \| {\tt q} \|_{\mathcal{Q}}
\qquad {\tt p}, {\tt q} \in \mathcal{Q}. \label{eq:continuity_c}
\end{align}
\end{subequations}
Assume also that: 
\begin{subequations}\label{eq:assumptions_coercivity_forms}
\begin{enumerate}
\item there exists a positive constant $k_a$ such that
\begin{align}
& a({\tt u}, {\tt u}) \geq k_a \| {\tt u} \|^2_{\mathcal{V}} \qquad \forall 
{\tt u} \in \mathcal{V}^0; \label{eq:weak_coercivity_a}
\end{align}
\item there exists a positive constant $k_b$ such that
\begin{align}
& \forall {\tt q} \in \mathcal{Q}, \, \exists {\tt v} \in \mathcal{V}, \, 
{\tt v} \neq 0, \, {\rm such \, \, that} \nonumber \\
& b({\tt v}, {\tt q}) \geq k_b \| {\tt v} \|_{\mathcal{V}} \| {\tt q} \|_{\mathcal{Q}};
\label{eq:weak_coercivity_b}
\end{align}

\item the following "smallness relation" holds
\begin{align}
& \delta:= M_a \left( 1 + \frac{M_a}{k_a} \right) \frac{M_c}{k_b^2} < 1.
\label{eq:smallness_constants}
\end{align}
\end{enumerate}
\end{subequations}
Then, problem~\eqref{eq:abstract_SP_problem} has a unique solution that satisfies the stability estimates:
\begin{subequations}\label{eq:estimates_solution}
\begin{align}
& \| {\tt u} \|_{\mathcal{V}} \leq c_{11} \| F \|_{\mathcal{V}^{\prime}} + c_{12} \| G \|_{\mathcal{Q}^{\prime}}, 
 \label{eq:estimate_for_u} \\
& \| {\tt p} \|_{\mathcal{Q}} \leq c_{21} \| F \|_{\mathcal{V}^{\prime}} + c_{22} \| G \|_{\mathcal{Q}^{\prime}},
\label{eq:estimate_for_p}
\end{align}
where 
\begin{equation}
\|F \|_{\mathcal{V}^{\prime}} = \sup_{\substack{\tt v \in \mathcal V\\\tt v\neq 0}} \frac{| F(\tt v)|}{\|\tt v \|_{\mathcal V}},
\qquad 
 \|G \|_{\mathcal{Q}^{\prime}} = \sup_{\substack{\tt q \in \mathcal Q\\\tt q\neq 0}} \frac{| G(\tt q)|}{\|\tt q \|_{\mathcal Q}}
\end{equation}
%$\|F \|_{\mathcal{V}^{\prime}}$ and $\| G\|_{\mathcal{Q}^{\prime}}$  are computed using~\eqref{eq:norm_dual} 
and
\begin{align*}
& c_{11} = \Frac{1}{k_a} + \left( 1 + \Frac{M_a}{k_a} \right)^2 \Frac{M_c}{1-\delta}, & \\
& c_{12} = \Frac{1}{k_b} \left( 1 + \Frac{M_a}{k_a} \right) 
\left( 1 + \Frac{M_a M_c}{1-\delta} \left( 1 + \Frac{M_a}{k_a} \right) \right), & \\
& c_{21} = \frac{1}{k_b} \frac{1}{1-\delta} \left( 1 + \frac{M_a}{k_a} \right), & \\
& c_{22} = c_{21} \frac{M_a}{k_b}.
\end{align*}
\end{subequations}
%\end{theorem}

\subsection{The approximate case}\label{sec:saddle_point_approx}
Let $\mathcal{T}_h$, $h>0$, be a family of unstructured partitions of the domain $\Omega$ into tetrahedral
elements as described in Section~\ref{sec:geometrical_discretization}. 
We denote by $\mathcal{V}_h$ and $\mathcal{Q}_h$ 
two finite-dimensional subspaces of $\mathcal{V}$ and
$\mathcal{Q}$, respectively. Both $\mathcal{V}_h$ and $\mathcal{Q}_h$ consist of piecewise polynomials
defined over the triangulation $\mathcal{T}_h$. Then, the Galerkin Finite Element (GFE) 
approximation of the abstract generalized saddle-point problem~\eqref{eq:abstract_SP_problem} reads: 
Find ${\tt u}_h \in \mathcal{V}_h$ and ${\tt p}_h \in \mathcal{Q}_h$ such that:
\begin{subequations}\label{eq:abstract_SP_problem_h}
\begin{align}
& a({\tt u}_h, {\tt v}_h)  + b({\tt v}_h, {\tt p}_h) = F({\tt v}_h) \qquad \forall {\tt v}_h \in \mathcal{V}_h, \\
& b({\tt u}_h, {\tt q}_h)  - c({\tt p}_h, {\tt q}_h) = G({\tt q}_h) \qquad \forall {\tt q}_h \in \mathcal{Q}_h.
\end{align}
In the remainder of the section, denote by 
\begin{align}
& \mathcal{V}^{0}_h = \left\{ {\tt v}_h \in \mathcal{V}_h, \, \, 
b({\tt v}_h, {\tt q}_h)= 0 \, \, \forall {\tt q}_h \in \mathcal{Q}_h \right\} 
& \label{eq:kernel_space_h}
\end{align}
the discrete kernel of the bilinear form $b$.
\end{subequations}

The following result is the discrete counterpart of Theorem A.1.
%~\ref{theo:existence_uniqueness_abstract}.
%\begin{theorem}\normalfont{\textbf{(Existence and uniqueness of the approximate solution to generalized saddle-point problems)}}
%\label{theo:existence_uniqueness_abstract_h}

\vspace{.3cm}

\noindent\textbf{Theorem A.2. (Existence and uniqueness of the approximate solution to generalized saddle-point problems)}
Let us consider the generalized saddle-point problem~\eqref{eq:abstract_SP_problem}
and its GFE approximation~\eqref{eq:abstract_SP_problem_h}.
Assume that there exist positive constants $M_{a,h}$, $M_{b,h}$ and $M_{c,h}$ such that:
\begin{subequations}\label{eq:assumptions_continuity_forms_h}
\begin{align}
& |a({\tt u}_h, {\tt v}_h)| \leq M_{a,h} \| {\tt u}_h \|_{\mathcal{V}} \| {\tt v}_h \|_{\mathcal{V}}
\qquad {\tt u}_h, {\tt v}_h \in \mathcal{V}_h, \label{eq:continuity_a_h} \\
& |b({\tt v}_h, {\tt q}_h)| \leq M_{b,h} \| {\tt v}_h \|_{\mathcal{V}} \| {\tt q}_h \|_{\mathcal{Q}}
\qquad {\tt v}_h \in \mathcal{V}_h, \, {\tt q}_h \in \mathcal{Q}_h, \label{eq:continuity_b_h} \\
& |c({\tt p}_h, {\tt q}_h)| \leq M_{c,h} \| {\tt p}_h \|_{\mathcal{Q}} \| {\tt q}_h \|_{\mathcal{Q}}
\qquad {\tt p}_h, {\tt q}_h \in \mathcal{Q}_h. \label{eq:continuity_c_h}
\end{align}
\end{subequations}
Assume also that: 
\begin{subequations}\label{eq:assumptions_coercivity_forms_h}
\begin{enumerate}
\item there exists a positive constant $k_{a,h}$ such that
\begin{align}
& a({\tt u}_h, {\tt u}_h) \geq k_{a,h} \| {\tt u}_h \|^2_{\mathcal{V}} \qquad \forall 
{\tt u}_h \in \mathcal{V}^0_h; \label{eq:weak_coercivity_a_h}
\end{align}
\item there exists a positive constant $k_{b,h}$ such that
\begin{align}
& \forall {\tt q}_h \in \mathcal{Q}_h, \, \exists {\tt v}_h \in \mathcal{V}_h, \, 
{\tt v}_h \neq 0, \, {\rm such \, \, that} \nonumber \\
& b({\tt v}_h, {\tt q}_h) \geq k_{b,h} \| {\tt v}_h \|_{\mathcal{V}} \| {\tt q}_h \|_{\mathcal{Q}};
\label{eq:weak_coercivity_b_h}
\end{align}

\item the following "smallness relation" holds
\begin{align}
& \delta_h:= M_{a,h} \left( 1 + \frac{M_{a,h}}{k_{a,h}} \right) \frac{M_{c,h}}{k_{b,h}^2} < 1.
\label{eq:smallness_constants_h}
\end{align}
\end{enumerate}
\end{subequations}
Then, problem~\eqref{eq:abstract_SP_problem_h} has a unique solution that satisfies the stability estimates:
\begin{subequations}\label{eq:estimates_solution_h}
\begin{align}
& \| {\tt u}_h \|_{\mathcal{V}} \leq c_{11,h} \| F_h \|_{\mathcal{V}^{\prime}} + c_{12,h} 
\| G_h \|_{\mathcal{Q}^{\prime}}, 
 \label{eq:estimate_for_u_h} \\
& \| {\tt p}_h \|_{\mathcal{Q}} \leq c_{21,h} \| F_h \|_{\mathcal{V}^{\prime}} + c_{22,h} 
\| G_h \|_{\mathcal{Q}^{\prime}},
\label{eq:estimate_for_p_h}
\end{align}
where $F_h:= F({\tt v}_h)$, ${\tt v}_h \in \mathcal{V}_h$, and $G_h:= G({\tt q}_h)$,
${\tt q}_h \in \mathcal{Q}_h$, and
\begin{align*}
& c_{11,h} = \Frac{1}{k_{a,h}} + \left( 1 + \Frac{M_{a,h}}{k_{a,h}} \right)^2 
\Frac{M_{c,h}}{1-\delta_h}, & \\
& c_{12,h} = \Frac{1}{k_{b,h}} \left( 1 + \Frac{M_{a,h}}{k_{a,h}} \right) 
\left( 1 + \Frac{M_{a,h} M_{c,h}}{1-\delta_h} \left( 1 + \Frac{M_{a,h}}{k_{a,h}} \right) \right), & \\
& c_{21,h} = \frac{1}{k_{b,h}} \frac{1}{1-\delta_h} \left( 1 + \frac{M_{a,h}}{k_{a,h}} \right), & \\
& c_{22,h} = c_{21_h} \frac{M_{a,h}}{k_{b,h}}.
\end{align*}
\end{subequations}
%\end{theorem}

\bibliographystyle{plain}
\bibliography{biblio}

\end{document}